\documentclass[11pt]{amsart} 
\usepackage{amsmath,amssymb,latexsym,amsthm,tikz, enumerate,amscd,hyperref} \usepackage[abbrev]{amsrefs} 
\usepackage{graphicx}
\usepackage{todonotes}
\usepackage[margin=1in]{geometry}
\newtheoremstyle{dotless}{}{}{\itshape}{}{\bfseries}{}{ }{}
\usetikzlibrary{decorations.markings}

\newtheorem{Theorem}{Theorem}[section]

\newtheorem{Proposition}[Theorem]{Proposition} 
\newtheorem{corollary}[Theorem]{Corollary} 
 
\newtheorem{lemma}[Theorem]{Lemma}

\newtheorem*{theorem*}{Theorem}

\theoremstyle{definition} 
\newtheorem{definition}[Theorem]{Definition} 
\newtheorem*{Acknowledgements}{Acknowledgements}
\newtheorem{remark}[Theorem]{Remark}

\newtheorem{example}[Theorem]{Example}

\newtheorem*{question}{Question}

\DeclareMathOperator{\Cone}{\text{Cone}}
\DeclareMathOperator{\piprod}{\raisebox{-0.1em}{\huge{$\pi$}}\kern -0.2em}

\newcommand{\ce}{\mathcal {E}}

\newcommand{\cu}{\mathcal {U}}

\newcommand{\cz}{\mathcal {Z}}

\newcommand{\rr}{{\mathbb R}}

\def\clap#1{\hbox to 0pt{\hss#1\hss}}

\newcommand{\comment}[1]{} 
 




\newcommand{\Homeo}{\operatorname{Homeo}}
\newcommand{\diam}{\operatorname{diam}}

	{ 
\end{enumerate}
} 

	{ 
\end{enumerate}
} 

\newenvironment{enumeratei'}{ 
\begin{enumerate}[\upshape (i)$'$]}
	{ 
\end{enumerate}
} 

\newenvironment{enumerate1'}{ 
\begin{enumerate}[\upshape (1)$'$]}
	{ 
\end{enumerate}
} 

	%

	%

\newenvironment{enumeratea'}{ 
\begin{enumerate}[\upshape (a)$'$]}{ 
\end{enumerate}
}

\usepackage{color}
\usepackage[outerbars,color]{changebar}
\usepackage{ifthen}

\newboolean{usecolor}
\setboolean{usecolor}{true}

\ifthenelse{\boolean{usecolor}}
{
 \definecolor{colore}{cmyk}{0,1,0.6,0}
 \definecolor{coloregen}{cmyk}{0.7,0,1,0}
 \definecolor{coloresimo}{cmyk}{1,0.6,0,0}
}
{
 \definecolor{colore}{cmyk}{0,0,0,1}
 \definecolor{coloregen}{cmyk}{0,0,0,1}
 \definecolor{coloresimo}{cmyk}{0,0,0,1}
}


\setlength{\marginparwidth}{1in}


\numberwithin{equation}{section} 
\setcounter{section}{0}

\setlength{\marginparwidth}{1in}

\begin{document}
\title{Compressible Spaces and $\ce\cz$-Structures}
\author{Craig Guilbault}
\address{Department of Mathematical Sciences\\
University of Wisconsin-Milwaukee, Milwaukee, WI 53201}
\email{craigg@uwm.edu}
\author{Molly Moran}
\address{Department of Mathematics, The Colorado College, Colorado Springs, Colorado 80903}
\email{molly.moran@coloradocollege.edu}
\author{Kevin Schreve}
\address{Department of Mathematics, University of Chicago, Chicago, IL 60637}
\email{kschreve@math.uchicago.edu}
\date{}
\keywords{$\cz$-structure, boundary, graph of groups $\ce \cz$-structure}

\begin{abstract}
\noindent Bestvina introduced a $\mathcal{Z}$-structure for a group $G$ to
generalize the boundary of a CAT(0) or hyperbolic group. A refinement of this
notion, introduced by Farrell and Lafont, includes a $G$-equivariance
requirement, and is known as an $\mathcal{E}\mathcal{Z}$-structure. A recent
result of the first two authors with Tirel put $\mathcal{E}\mathcal{Z}%
$-structures on Baumslag-Solitar groups and $\mathcal{Z}$-structures on
generalized Baumslag-Solitar groups. We generalize this to higher dimensions
by showing that fundamental groups of graphs of closed nonpositively curved
Riemannian $n$-manifolds (each vertex and edge manifold is of dimension $n$)
admit $\mathcal{Z}$-structures and graphs of negatively curved or flat
Riemannian $n$-manifolds admit $\mathcal{E}\mathcal{Z}$-structures. \bigskip

\noindent\textbf{AMS classification numbers}. Primary: 20F36, 20F55, 20F65,
57S30, 57Q35, Secondary: 20J06, 32S22

\end{abstract}
\maketitle

\section{Introduction}

Suppose that a discrete group $G$ acts properly and cocompactly by isometries
on a CAT(0) space $X$. The space $X$ can be naturally compactified by
attaching the visual boundary $\partial_{\infty}X$, and $\bar X = X
\cup\partial_{\infty}X$ is homotopy equivalent to $X$. In fact, $\partial
_{\infty}X$ is a \emph{$\cz$-set} in $\bar X$, meaning that $\partial_{\infty
}X$ can be instantly homotoped\footnote{More precisely, there is a homotopy
$h_{t}: \bar X \rightarrow\bar X$ so that $h_{0} = \text{Id}_{\bar X}$ and
$h_{t}(\bar X) \subset X$ for all $t > 0$.} into $X$. Furthermore, large
translates of compact sets in $X$ ``look small", i.e. if $K$ is a compact set
and $x_{0} \in X$, then the visual diameter of $\{g_{n}K\}_{n = 1}^{\infty}$
as viewed at $x_{0}$ goes to $0$ if $d(1,g_{n}) \rightarrow\infty$ in $G$. It
follows that for any open cover $\cu$ of $\bar X$, all but finitely many
translates of $K$ are contained in an element $U \in\cu$.

In \cite{bestvina}, Bestvina introduced the concept of a $\cz$--structure on a
group $G$ in order to generalize the boundary of a CAT(0) (or word hyperbolic) group:

\begin{definition}
A \emph{$\cz$-structure} on a group $G$ is a pair of spaces $(\bar X,Z)$
satisfying the following four conditions:

\begin{enumerate}
\item $\bar{X}$ is a compact absolute retract.

\item $Z$ is a $\cz$-set in $\bar X$,

\item $X = \bar X - Z$ is a proper metric space on which $G$ acts properly and
cocompactly by isometries,

\item $X$ satisfies the \emph{nullity condition} with respect to the
$G$-action: for every compact $K \subset X$ and any open cover $\cu$ of $X$,
all but finitely many $G$-translates of $K$ lie in an element of $\cu$.
\end{enumerate}
\end{definition}

If the group action of $G$ on $X$ extends to $\bar{X}$, this is called an
$\ce\cz$-structure \cite{fl}. In Bestvina's original definition, the action on
$X$ was required to be free and $\bar{X}$ was required to be a Euclidean
retract (finite-dimensional absolute retract). The modified definition here is
due to Dranishnikov \cite{dr}. Among other things, this modification allows
for groups with torsion. It is still open whether all groups of type $F$ (or
even type $F^{\ast}$ or $F_{AR}^{\ast}$) admit $\cz$-structures\footnote{See
\cite{gm} for definitions.}.

Many properties of CAT(0) or hyperbolic boundaries transfer over to this
general setting. For example: the dimension of a $\cz$-boundary is equal to
the global cohomological dimension of that boundary; the \v{C}ech cohomology
of $Z$ determines $H^{\ast}\left(  G;\mathbb{Z}G\right)  $; and, when $G$ is
torsion-free, the cohomological dimension of $G$ is $1+\dim Z$. See
\cite{bestvina}, \cite{dr} and \cite{gm}.

In this paper, we are concerned with the following question:

\begin{question}
Suppose a group $G$ acts properly and cocompactly on a CAT(0) space $X$ by \emph{homeomorphisms}. Under what conditions is $(\bar X, \partial_\infty X)$ part of a $(\ce)\cz$-structure for $G$?
\end{question}

We note that the change to an action by homeomorphisms does not affect
property $(2)$ of Definition 1.1. In fact, in \cite{amn}, it is shown that in
this case there is a topologically equivalent metric on $X$ for which the
action is by isometries. Thus, in this situation, properties $(1)-(3)$ are
always satisfied. The key property to check then is the nullity condition. The
nullity condition can certainly fail for certain actions, for example, the
Baumslag-Solitar group
\[
BS(1,2) = \langle a,t | tat^{-1} = a^{2} \rangle
\]
acts properly and cocompactly on the product $T \times\rr$, where $T$ is a
trivalent tree. Certain translates of a fundamental domain have exponentially
growing height (as measured by the distance to a fixed basepoint) in $T
\times\rr$, and this implies $\bar X$ fails the nullity condition.

To get around this problem, the first two authors with Tirel in \cite{gmt}
modified the action of $BS(1,2)$ (or more generally $BS(m,n)$) on $T\times\rr$
by \textquotedblleft compressing the $\rr$-direction". The rough idea is to
conjugate the given action by a homeomorphism of $T\times\rr$ which is the
identity on $T$ and shrinks distances in the $\rr$-direction. This changes the
exponential growth of the diameter of translates of a fundamental domain to
sublinear growth (the diameters are still forced to go to infinity), which is
enough to ensure the nullity condition.

In this paper, we give a more general picture of when this strategy works. Our
setup requires that the CAT(0) space splits as a Cartesian product $X\times
Y$. Then, given a fundamental domain $K$, we insist that all $G$-translates of
$K$ have uniformly bounded diameter in the $X$-direction, and have
\textquotedblleft properly controlled\textquotedblright\ diameter in the
$Y$-direction. Finally, we assume $Y$ is \emph{compressible}, which roughly
means that $Y$ admits homeomorphisms which uniformly shrink all compact sets
$K\subset Y$ (see Definition \ref{d:compressible}). With this setup, we can
perform the same conjugating trick as in \cite{gmt} to produce a $\cz$-structure.

The compressibility hypothesis on $Y$ is rather strong since it requires that
compressing be done via homeomorphisms. This rules out a variety of important
candidates for the space $Y$. Remark \ref{Remark: radial compressions} and
Section \ref{Section: Noncompressible spaces} address some of the limitations
imposed by this hypothesis. Nevertheless, we show that all simply connected
nonpositively curved Riemannian manifolds are compressible. This allows us to
take on an important and well-studied class of groups. One of our main
theorems is the following.

\begin{Theorem}\label{t:maini}
Fundamental groups of graphs of closed nonpositively curved Riemannian $n$-manifolds have $\mathcal{Z}$-structures.
\end{Theorem}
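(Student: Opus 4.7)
The plan is to realize $G=\pi_1(Y)$ as acting on a CAT(0) product of the form $T\times\tilde N$ that fits the paper's product-plus-compression framework, and then invoke that framework.

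First, I would construct the universal cover $\tilde Y$ as a tree of Hadamard $n$-manifolds. Since each edge map $M_e\to M_v$ is a $\pi_1$-injective map between closed aspherical Riemannian $n$-manifolds, it is a finite covering, so the universal covers of all vertex and edge manifolds are Hadamard $n$-manifolds and, by Cartan--Hadamard, each is homeomorphic to $\mathbb{R}^n$. Assemble $\tilde Y$ by placing a copy of $\tilde M_v$ above each vertex $v$ of the Bass--Serre tree $T$ and gluing along the cylinders $\tilde M_e\times[0,1]$ associated to the edges; then $G$ acts properly and cocompactly on $\tilde Y$ by path-metric isometries.

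Next, I would fix a single model Hadamard $n$-manifold $\tilde N$ together with a $G$-equivariant family of homeomorphisms $\tilde M_v\cong\tilde N$ compatible with the edge identifications, transporting the action to the CAT(0) product $T\times\tilde N$. The action on $T$ is the standard Bass--Serre action, while on the $\tilde N$-factor it is obtained by conjugating each vertex-stabilizer action through the chosen homeomorphism to $\tilde N$, gluing across edges via the commensurations $\pi_1(M_e)\subset\pi_1(M_{v_i})$. This yields an action on $T\times\tilde N$ by homeomorphisms, which by \cite{amn} can be made isometric after a change of metric.

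I would then verify the hypotheses of the paper's compression framework. The factor $\tilde N$ is compressible, since every simply connected nonpositively curved Riemannian manifold is (established earlier in the paper). For any compact fundamental domain $K$, the $T$-diameters of its $G$-translates are uniformly bounded (by cocompactness of the action on $T$), and the $\tilde N$-diameters are properly controlled as a function of word length. Applying the main compression theorem produces a conjugating homeomorphism of $T\times\tilde N$ so that the resulting action satisfies the nullity condition; adjoining the boundary $\partial T\cup\partial_\infty\tilde N$ in the appropriate way then delivers the required $\mathcal{Z}$-structure on $G$.

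The principal obstacle, I expect, is the equivariant topological-identification step. Because the Riemannian metrics on different $\tilde M_v$ genuinely differ and the edge maps are only covers rather than isometries, one must argue that the various vertex-group actions by homeomorphisms can be simultaneously conjugated, via the commensurations along edges, onto a common model $\tilde N$ so as to produce a well-defined $G$-action on $T\times\tilde N$. This graph-of-groups gluing, which uses topological (rather than Riemannian) flexibility of Hadamard manifolds, is the heart of the argument; once it is in place, the compression framework handles the rest.
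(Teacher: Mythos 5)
Your overall strategy is the paper's: realize $G$ acting on a product $T\times\widetilde{M}_v$ with bounded diameters in the tree direction and properly controlled diameters in the manifold direction, show the manifold factor is compressible, and conjugate by a compressing homeomorphism to obtain the nullity condition. But the step you defer to the final paragraph --- producing ``a $G$-equivariant family of homeomorphisms $\widetilde{M}_v\cong\widetilde{N}$ compatible with the edge identifications'' --- is a genuine gap, and it is not a matter of ``topological flexibility of Hadamard manifolds.'' For each edge $e$ the two monomorphisms $\phi_e^{\pm}$ determine finite-sheeted covers $M_e^-\to M_{i(e)}$ and $M_e^+\to M_{t(e)}$ whose total spaces are closed aspherical manifolds with fundamental group $G_e$; they are homotopy equivalent but not known to be homeomorphic. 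To obtain a single edge space covering both vertex spaces (equivalently, to make your identifications compatible across an edge, so that the $G$-action on the $\widetilde{N}$-factor is well defined) you need a homeomorphism $M_e^-\to M_e^+$ inducing the identity on $\pi_1$ --- that is precisely the Borel Conjecture for $G_e$. Knowing that both universal covers are homeomorphic to $\mathbb{R}^n$ gives nothing equivariant, and there is no a priori ``edge map $M_e\to M_v$'' to declare a covering: constructing it is the problem.

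The paper fills this in two ways. For $n\neq 4$ it invokes Farrell--Jones' topological rigidity theorem for closed nonpositively curved Riemannian manifolds, which supplies the required homeomorphism $M_e^-\to M_e^+$; one then builds a graph of covering spaces whose universal cover is a trivial bundle over the Bass--Serre tree, i.e.\ $T\times\widetilde{M}_v$. For $n=4$, where the surgery-theoretic proof of the Borel Conjecture is unavailable, the paper instead crosses every vertex and edge manifold with the Hilbert cube $I^{\omega}$: since $\operatorname{Wh}(G_e)=0$ (Farrell--Jones, valid in all dimensions), the homotopy equivalence $M_e^-\to M_e^+$ is simple, and Chapman's theorem makes $M_e^-\times I^{\omega}\to M_e^+\times I^{\omega}$ homotopic to a homeomorphism; compressibility survives because the product of a compressible space with a compact space is compressible. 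Without one of these inputs your construction of the action on $T\times\widetilde{N}$ does not get off the ground; with them, the rest of your outline (bounded tree-diameters, $C^R$-Lipschitz control in the fiber, compressibility of $\widetilde{M}_v$, and the conjugation trick) matches the paper's proof.
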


Here, a \emph{graph of closed nonpositively curved $n$-manifold groups} means
a finite connected graph of groups, where each vertex and edge group is the
fundamental group of a closed nonpositively curved Riemannian $n$-manifold
(where $n$ is the same for each vertex and edge). Equivalently, we could have
required that each vertex group is the fundamental group of a closed
nonpositively curved $n$-manifold and each edge group is finite index in its
vertex groups. The coarse geometric structure of these groups has been studied
by Farb-Mosher in \cite{fm}, and more generally by Mosher-Sageev-Whyte in
\cite{msw}. In those papers, graphs of groups with the finite index property
for edge groups are referred to as \emph{geometrically homogeneous.}

The beauty of the geometric homogeneity property is that the fundamental group
frequently acts properly and cocompactly on a Cartesian product space
$T\times\widetilde{M}_{v}$, where $\widetilde{M}_{v}$ is the universal cover
of a vertex space (in our case and in the case of \cite{fm}, a Riemannian
manifold homeomorphic to $%
\mathbb{R}
^{n}$) and $T$ is the associated Bass-Serre tree. Even though the action is
usually not by isometries, the CAT(0) geometry of $T\times\widetilde{M}_{v}$
under the $\ell_{2}$-metric, with its corresponding visual boundary, turn out
to be quite useful.

In addition to the above theorem, we give conditions under which our (new)
$G$-action extends to $\bar{X}$. Roughly speaking, we need to know that
certain covering maps lift to maps on the universal cover which extend to
visual boundaries; we also need a compressing homeomorphism that does not
change much under linear reparametrization of the domain. If each vertex
manifold is negatively curved or flat, our conditions will be satisfied, so we
have the following:

\begin{Theorem}
Fundamental groups of graphs of closed negatively curved or flat Riemannian $n$-manifolds have $\ce \mathcal{Z}$-structures.
\end{Theorem}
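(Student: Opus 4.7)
The plan is to upgrade the $\cz$-structure produced by the previous main theorem to an $\ce\cz$-structure by verifying that the $G$-action on $X = T\times \widetilde{M}_v$ extends continuously to the compactification $\bar X$. Since all four axioms of a $\cz$-structure are inherited from the previous theorem and the $\cz$-set is the visual boundary of the CAT(0) product metric, the only thing left to establish is the extension of the $G$-action from $X$ to $\bar X$. Recall that the action in question is the natural geometric action conjugated by a compression homeomorphism supported in the $\widetilde{M}_v$ direction, so I must verify both that the unconjugated action extends and that the compression-conjugation preserves the extension.

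For the unconjugated action, the tree factor acts by tree automorphisms, which extend automatically to the end compactification of $T$. In the manifold direction, deck transformations are isometries of the Hadamard manifold $\widetilde{M}_v$ and therefore extend to the visual boundary. The edge-crossing identifications of $\widetilde{M}_v$ with $\widetilde{M}_w$ are lifts of the finite covering maps $M_e\to M_v$ and $M_e\to M_w$; because universal covers are preserved under finite covers, these lifts are honest Riemannian isometries between two Hadamard manifolds, and for flat or negatively curved Hadamard manifolds every isometry extends continuously to the visual boundary. Since the visual boundary of the CAT(0) product is the spherical join of the two factor boundaries, the unconjugated action extends to $\bar X$.

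The main obstacle is ensuring that conjugation by the compression preserves this extension. I would take the compression $f\colon \widetilde{M}_v\to\widetilde{M}_v$ to be purely radial in geodesic polar coordinates around a fixed basepoint, so that $f$ reparametrizes the radial parameter while fixing angular directions. Such an $f$ extends to the identity on the visual boundary, so for any isometry $g$ of $\widetilde{M}_v$ the conjugate $f^{-1}\circ g\circ f$ extends with the same boundary map as $g$. In the flat case this is immediate since geodesics are Euclidean rays and the boundary sphere is canonically identified with unit vectors at the basepoint. The technical crux is the negatively curved case, where the exponential map is a diffeomorphism but not an isometry: one must confirm that a radial $f$ defined via the exponential map still extends to the identity on the intrinsic visual boundary of $\widetilde{M}_v$, and that the resulting boundary extensions assemble continuously as the Bass-Serre tree factor varies. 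This continuity is precisely the content of the compressibility hypothesis that the compression ``does not change much under linear reparametrization of the domain'' mentioned in the introduction, and once it is established the $\ce\cz$-structure follows.
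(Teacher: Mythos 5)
There is a genuine gap, and it sits exactly where you pass over the edge identifications. You assert that the maps identifying $\widetilde{M}_v$ with $\widetilde{M}_w$ across an edge are ``honest Riemannian isometries'' because they are lifts of finite covering maps. They are not. For an edge $e$, the group $G_e$ is realized as the fundamental group of a finite cover $M_e^-$ of $M_{i(e)}$ and also of a finite cover $M_e^+$ of $M_{t(e)}$; these are two closed aspherical manifolds carrying \emph{different} metrics (pulled back from different vertex manifolds) that are a priori only homotopy equivalent. To build a single edge space one needs a homeomorphism $f:M_e^-\rightarrow M_e^+$, whose existence already requires topological rigidity (the Borel Conjecture via Farrell--Jones, which fails to apply when $n=4$, forcing the paper into a Hilbert cube manifold detour; in the flat case the Bieberbach theorems supply $f$ directly). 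The composite edge map $p_e^+=q_e^+\circ f$ therefore lifts only to a \emph{quasi-isometric homeomorphism} of universal covers, not an isometry. This is precisely why the theorem is stated for negatively curved or flat manifolds rather than all nonpositively curved ones: a quasi-isometry between Gromov hyperbolic spaces extends to the visual boundary (the negatively curved case), whereas a quasi-isometry of $\rr^{n}$ generally does \emph{not} extend to the visual sphere, so the flat case needs the third Bieberbach theorem to choose $f$ affine, hence with an affine (boundary-extending) lift. Your argument, taken at face value, would prove the $\ce\cz$ conclusion for arbitrary nonpositively curved vertex manifolds, which the paper explicitly cannot do.

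A secondary gap: you identify the technical crux as whether a radial compression extends to the identity on $\partial_\infty\widetilde{M}_v$, but that part is easy (radial maps preserve each geodesic ray from the basepoint). The real work in the conjugation step is showing that $H_\psi h H_\psi^{-1}$ sends a ray of slope $m$ in the join $\partial_\infty T\ast\partial_\infty\widetilde{M}_v$ to the point of slope $m$ on the image join line; since $h_2$ is only a $(K,\varepsilon)$-quasi-isometry, this requires the ``linearly controlled'' condition $\lim_{t\to\infty}\widehat{h}_\psi g\widehat{h}_\psi^{-1}(t)/t=1$ for linear $g$ (arranged by precomposing with $\log(x+1)$) together with the slope computation of the paper's Lemma on join lines. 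You gesture at this but do not supply it, and without the linear control the conjugated map can fail to converge to the correct boundary point.
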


As in the proof of Theorem \ref{t:maini} the CAT(0) geometry of $T\times\widetilde{M}_{v}$ plays a
vital role in the proof of this theorem.

A somewhat surprising ingredient in some part of the proofs of our main theorems is the use
of the theory of Hilbert cube manifolds as pioneered by Anderson,
Chapman, West,  Toru\'{n}czyk, Edwards, and others. See
\cite{Cha75} and \cite{vMi89} for overviews of the subject. Most---but not
all---of our results can be obtained without the use of Hilbert cube
manifolds, the key exception being the $n=4$ cases of our main theorems.
Perhaps more importantly, the Hilbert cube manifolds approach (which is valid
in all dimensions) has the potential for proving theorems beyond the scope of
this paper. We expand upon that thought in Section
\ref{Section: Some open questions}. The authors thank the referee who pushed
us to include this approach in the current article. Prior to the requested
revisions, we had overlooked a gap in our proofs when $n=4$.

\begin{Acknowledgements}
This research was supported in part by Simons Foundation Grant 427244, CRG. This material is based upon work done while the third author was supported by the National Science Foundation under Award No. 1704364. In addition to suggesting the Hilbert cube manifold approach, we thank the referee for carefully reading the article and other useful comments. 
\end{Acknowledgements}

\section{Boundaries of CAT(0) spaces}

We will assume the reader is familiar with the definitions and basic facts
about CAT(0) spaces, see \cite{bh} for complete details.

Let $X$ be a proper CAT(0) space, and let $\partial X$ be the boundary of $X$.
Fix a base point $x_{0}\in X$. Each equivalence class of rays in $\partial X$
contains exactly one representative emanating from $x_{0}$. We may endow
$\overline{X}=X\cup\partial X$, with the \emph{cone topology}, described
below, under which $\partial X$ is a closed subspace of $\overline{X}$ and
$\overline{X}$ compact. Equipped with the topology induced by the cone
topology on $\overline{X}$, the boundary is called the \emph{visual boundary}
of $X$; we denote it by $\partial_{\infty}X$.

The cone topology on $\overline{X}$, for $x_{0}\in X$, is generated by the
basis $\mathcal{B}=\mathcal{B}_{0}\cup\mathcal{B}_{\infty}$ where
$\mathcal{B}_{0}$ consists of all open balls $B(x,r)\subset X$ and
$\mathcal{B}_{\infty}$ is the collection of all sets of the form
\[
U(c,r,\epsilon)=\{x\in\overline{X}\mid d(x,x_{0})>r\text{ and }d(p_{r}%
(x),c(r))<\epsilon\}
\]
where $c:[0,\infty)\rightarrow X$ is any geodesic ray based at $x_{0}$, $r>0$,
$\epsilon>0$, and $p_{r}$ is the natural projection of $\overline{X}$ onto
$\overline{B}(x_{0},r)$.

Now, for each pair $x\in X$ and $\varepsilon>0$, let
\[
V( x,\varepsilon) =\{y\in\overline{X}\mid d( x_{0},y) >d( x_{0},x) \text{ and
}d( x,p_{d( x_{0},x) }( y) ) <\varepsilon\}
\]

\begin{lemma}
If $\mathcal{B}_{0}$ is the set of all open balls in a proper CAT(0) space $X$
and $\mathcal{V}_{x_{0}}$ is the collection of all $V( x,\varepsilon) $ as
defined above, then $\mathcal{B}_{0}\cup\mathcal{V}_{x_{0}}$ is a basis for
the usual cone topology on $\overline{X}$.
\end{lemma}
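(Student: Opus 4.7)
The plan is to verify that each basic open set in one of the two proposed bases contains, around every one of its points, a basic open set from the other. Since both collections share the balls $\mathcal{B}_0$, the work reduces to comparing $U(c,r,\epsilon)$ and $V(x,\varepsilon)$, and the two key tools I expect to use are (a) the radial projection $p_s\colon\overline{X}\to\overline{B}(x_0,s)$ is continuous and is $1$-Lipschitz on $X$, and (b) these projections compose, i.e.\ $p_r=p_r\circ p_{r'}$ for $0\le r\le r'$ (since the geodesic from $x_0$ to any point, or to a boundary ray, passes through both projection points).

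First I would show that every $V(x,\varepsilon)$ is open in the cone topology. Setting $r=d(x_0,x)$, I would take a point $y_0\in V(x,\varepsilon)$ and split on whether $y_0\in X$ or $y_0\in\partial_\infty X$. For $y_0\in X$, continuity of $p_r$ and of $d(x_0,\cdot)$ supplies a small ball in $\mathcal{B}_0$ around $y_0$ contained in $V(x,\varepsilon)$. For $y_0\in\partial_\infty X$, the idea is to represent $y_0$ by its unique ray $c_{y_0}$ based at $x_0$, so that $p_r(y_0)=c_{y_0}(r)$, and then to use $U(c_{y_0},r,\varepsilon')$ with $\varepsilon'=\varepsilon-d(c_{y_0}(r),x)>0$; a single triangle inequality through $c_{y_0}(r)$ confirms the containment.

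Next I would show the reverse, that every $U(c,r,\epsilon)$ is a union of sets from $\mathcal{B}_0\cup\mathcal{V}_{x_0}$. Given $z\in U(c,r,\epsilon)$, the case $z\in X$ is handled by a small ball. The real content is when $z\in\partial_\infty X$: represent $z$ by a ray $c_z$ and choose an auxiliary point $x=c_z(r')$ deep along this ray at some $r'>r$, together with $\varepsilon'<\epsilon-d(c_z(r),c(r))$. The point is that for any $y\in V(x,\varepsilon')$, property (b) lets me rewrite $p_r(y)=p_r(p_{r'}(y))$, property (a) gives $d(p_r(y),c_z(r))\le d(p_{r'}(y),x)<\varepsilon'$, and a final triangle inequality through $c_z(r)$ places $y$ inside $U(c,r,\epsilon)$.

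The main obstacle, or rather the only delicate step, is this last $U\to V$ refinement in the boundary case, since there one must manufacture the auxiliary interior point $x$ from the abstract ray data and then propagate the estimate through two different radii $r'$ and $r$. Everything else is routine: in the $V\to U$ boundary case the ray representing $y_0$ produces an almost tautological match, and the interior cases are standard openness arguments in a metric space. Beyond properness and the CAT(0) hypothesis (used to guarantee a unique base-pointed ray in each boundary class and the $1$-Lipschitz behavior of radial projection), no additional structure is invoked.
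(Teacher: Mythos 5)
Your proof is correct and follows essentially the same two-way basis-refinement argument as the paper; your verification that each $V(x,\varepsilon)$ is open in the cone topology (balls at interior points, $U(c_{y_0},r,\varepsilon-d(c_{y_0}(r),x))$ at boundary points) is exactly the paper's. The only difference is in the other direction: the paper simply observes the exact identity $U(c,r,\epsilon)=V(c(r),\epsilon)$, since $d(x_0,c(r))=r$, so no pointwise refinement with an auxiliary point at radius $r'>r$ is needed there --- your more laborious argument for that step is still valid, just avoidable.
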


\begin{proof}
Clearly each set $U( \xi,r,\varepsilon) $ can be expressed as $V( \xi( r)
,\varepsilon) $; so the cone topology is at least as fine as the proposed
topology. For the reverse containment, suppose $y\in V( x,\varepsilon)
\in\mathcal{V}_{x_{0}}$. Let $\delta=\varepsilon-d( x,p_{d( x_{0},x) }( y) )$.
If $y\in\partial X$ then $y\in U( y,d( x_{0},x),\delta) \subseteq V(
x,\varepsilon)$. If $y\in X$ let $W=B( y,\delta) \backslash\overline{B(
x_{0},d(x_{0},x) ) }$. Since projection onto $\overline{B(x_{0},d( x_{0},x) )
}$ does not increase distances, $W\subseteq V( x,\varepsilon) $. It follows
that $V(x,\varepsilon) $ is open in the cone topology, so the proposed
topology is at least as fine as the cone topology.
\end{proof}

The following lemma is similar in spirit to the Lebesgue covering lemma and is
a generalization of Lemma 2.3 in \cite{gmt}.

\begin{lemma}
\label{Lemma: Cover of Boundary} Let $(X,d)$ be a proper CAT(0) space and let
$\mathcal{U}$ be an open cover of $\overline{X}$. Then there exists $R\gg0$
and $\delta>0$ so that for every $x\in X\backslash B( x_{0},R) $,
$V(x,\delta)$ lies in an element of $\mathcal{U}$.
\end{lemma}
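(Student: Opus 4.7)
The plan is to exploit compactness of $\partial_\infty X$ together with the basis from the previous lemma, then use the CAT(0) fact that the radial projection $p_r$ onto $\overline{B}(x_0,r)$ is distance non-increasing. First I would refine $\mathcal{U}$ on the boundary: for each $\xi \in \partial_\infty X$, pick $U_\xi \in \mathcal{U}$ with $\xi \in U_\xi$ and, using the basis $\mathcal{V}_{x_0}$, a set $V(y_\xi, \varepsilon_\xi)$ satisfying $\xi \in V(y_\xi, \varepsilon_\xi/2)$ and $V(y_\xi, \varepsilon_\xi) \subseteq U_\xi$. By compactness of $\partial_\infty X$, extract a finite subcover $\{V(y_i, \varepsilon_i/2)\}_{i=1}^n$, each with $V(y_i, \varepsilon_i) \subseteq U_i \in \mathcal{U}$; set $R_0 = \max_i d(x_0, y_i)$ and $\delta = \tfrac{1}{2}\min_i \varepsilon_i$.

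The second step is to promote this cover of $\partial_\infty X$ to a cover of an end of $X$: for some $R \geq R_0$, every $x \in X \setminus B(x_0, R)$ lies in at least one $V(y_i, \varepsilon_i/2)$. If no such $R$ existed, I could extract a sequence $\{x_k\} \subseteq X$ with $d(x_0, x_k) \to \infty$ avoiding every $V(y_i, \varepsilon_i/2)$. Compactness of $\overline{X}$ produces a subsequence converging to some $\xi \in \partial_\infty X$; since $\xi \in V(y_j, \varepsilon_j/2)$ for some $j$, openness of this basis set forces the tail of the sequence inside it, a contradiction.

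The third step is to show that for such an $x \in V(y_j, \varepsilon_j/2)$ with $d(x_0, x) > R$, one has $V(x, \delta) \subseteq V(y_j, \varepsilon_j) \subseteq U_j$. Writing $r_j = d(x_0, y_j) \leq s := d(x_0, x)$, and using $p_s(x) = x$ together with $p_{r_j}(p_s(z)) = p_{r_j}(z)$ (since both points lie on the geodesic from $x_0$ to $z$), the distance non-increasing property of $p_{r_j}$ yields
\[
d(p_{r_j}(x), p_{r_j}(z)) \leq d(p_s(x), p_s(z)) = d(x, p_s(z)) < \delta
\]
for any $z \in V(x, \delta)$. Combining with $d(y_j, p_{r_j}(x)) < \varepsilon_j/2$ via the triangle inequality gives $d(y_j, p_{r_j}(z)) < \delta + \varepsilon_j/2 \leq \varepsilon_j$, as required.

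The main obstacle I anticipate is the second step: translating a cover of $\partial_\infty X$ into a cover of the entire complement of a large ball. The compactness-and-contradiction route sidesteps the need to extend each segment $[x_0, x]$ to a full geodesic ray, which would be problematic in a general proper CAT(0) space where not every segment extends.
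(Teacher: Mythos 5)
Your proof is correct and follows the same overall skeleton as the paper's: a finite subcover of the compact boundary $\partial_\infty X$ by sets $V(y_i,\varepsilon_i)$, a boundedness argument for the part of $X$ these miss, a uniform $\delta$, and the non-expansiveness of the projections $p_r$ to push the containment outward. The differences are in two implementations. First, you obtain the uniform $\delta$ by the standard half-radius refinement (cover $\partial_\infty X$ by $V(y_i,\varepsilon_i/2)$ with $V(y_i,\varepsilon_i)\subseteq U_i$, then set $\delta=\tfrac12\min_i\varepsilon_i$), whereas the paper extracts it as the minimum of a continuous, strictly positive function $\eta$ on the compact sphere $S(x_0,R)$ --- a Lebesgue-number argument on the sphere. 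Your route is arguably cleaner, though it does quietly require re-centering: given $\xi\in V(y,\varepsilon')\subseteq U$, one must produce $V(y',\varepsilon'')\subseteq U$ with $\xi\in V(y',\varepsilon''/2)$, which works by taking $y'=\xi(d(x_0,y))$ and $\varepsilon''=\varepsilon'-d(y,y')$; this step deserves a sentence. Second, you prove boundedness of the uncovered region by sequential compactness of $\overline{X}$ (a divergent sequence avoiding all $V(y_i,\varepsilon_i/2)$ would subconverge to a boundary point, contradicting openness of these sets), while the paper invokes an Arzela--Ascoli argument on the closed ray-free set $X\setminus\bigcup U_i$; these are interchangeable. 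Your final projection computation, using $p_{r_j}=p_{r_j}\circ p_s$ and the triangle inequality at level $r_j$, is a correct direct version of the paper's two-stage observation that $V(x,\delta)\subseteq V(p_R(x),\delta)$. Each approach buys roughly the same thing; yours avoids introducing the auxiliary functions $\eta_i$, the paper's avoids the cover-refinement step.
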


\begin{proof}
Without loss of generality we may assume $\mathcal{U}$ consists entirely of
elements from the basis $\mathcal{B}_{0} \cup\mathcal{V}_{x_{0}}$. Since
$\partial_{\infty}X$ is compact, there exists $\{U_{1},U_{2},...,U_{k}%
\}\subseteq\mathcal{U}$ that covers $\partial_{\infty}X$. For each $i$, write
$U_{i}=V(x_{i},\varepsilon_{i}) $. Since $X\backslash\cup_{i-1}^{k}U_{i}$ is a
closed subset of $X$ which contains no infinite rays, an Arzela-Ascoli
argument shows that $X\backslash\cup_{i-1}^{k}U_{i}$ is bounded. Choose
$R\gg0$ so that
\[
R>\max\{ d( x_{0},x_{i}) \mid1\leq i\leq k\} \text{ and } X\backslash
\cup_{i-1}^{k}U_{i}\subseteq B( x_{0},R).
\]
Note that if an open ball $B(x,\varepsilon)$ lies in $U_{i}$ then
$V(x,\varepsilon) \subseteq U_{i}$. It follows that, for each $x\in S(
x_{0},R) $ there exists some $\varepsilon_{x}>0$ so that $V( x,\varepsilon
_{x}) $ is contained in some $U_{i}$. For each $i\in\{1,2,...,k\}$, define a
function $\eta_{i}:S( x_{0},R) \rightarrow\lbrack0,\infty)$ by $\eta
_{i}(x)=\text{sup}\{\epsilon\,|\,V(x,\epsilon)\subseteq V_{i}\}$. Note that
$\eta_{i}$ is continuous and $\eta_{i}(x)>0$ if and only if $x\in V_{i}$.
Thus, $\eta:S( x_{0},R) \rightarrow\lbrack0,\infty)$ defined by $\eta
(x)=\text{max} \{\eta_{i}(x)\}_{i=1}^{k}$ is continuous and strictly positive.
Let $\delta^{\prime}$ be the minimum value of $\eta$ and set $\delta
=\min\{\frac{\delta^{\prime}}{2},\frac{1}{R}\}$. Clearly $V( x,\delta) $ lies
in some $U_{i}$ for all $x\in S( x_{0},R) $. Moreover, if $d( x_{0},x) >R$
then $V( x,\delta) \subseteq V( p_{R}( x) ,\delta) $; so again $V( x,\delta) $
lies in some $U_{i}$.
\end{proof}

\begin{definition}
A function $\phi: \rr^{+} \rightarrow\rr^{+}$ is \emph{sublinear} if
\[
\lim_{x \rightarrow\infty} \frac{\phi(x)}{x} = 0.
\]
A function $\phi: \rr^{+} \rightarrow\rr^{+}$ is \emph{uniformly sublinear}
if
\[
\bar\phi(t) := \max_{|x-y| = t} |\phi(x) - \phi(y)|
\]
is sublinear.
\end{definition}

For example, $\log(x + 1): \rr^{+} \rightarrow\rr^{+}$ is a uniformly
sublinear homeomorphism.

\begin{lemma}
\label{l:cat0angle} Suppose $(X,d)$ is a proper CAT(0) space. If $\phi:\rr^{+}
\rightarrow\rr^{+}$ is sublinear and $\mathcal{U}$ is an open cover of
$\overline{X}$ then there exists $T>0$ so that whenever $d( x_{0},x) >T$, $B(
x,\phi( d( x_{0},x) ) ) $ lies in some $U\in\mathcal{U}$.
\end{lemma}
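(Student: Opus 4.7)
The plan is to reduce this to the preceding Lemma via a projection argument. Apply that lemma to $\cu$ to obtain constants $R>0$ and $\delta>0$ such that $V(x',\delta)$ lies in some element of $\cu$ whenever $d(x_0,x')>R$. Fix once and for all $r:=R+1$ and, for a point $x\in X$ with $s:=d(x_0,x)$ large, set $x':=p_r(x)$; then $d(x_0,x')=r>R$, so $V(x',\delta)\subseteq U$ for some $U\in\cu$. The goal is to verify that $B(x,\phi(s))\subseteq V(x',\delta)$ for all sufficiently large $s$.

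For $y\in B(x,\phi(s))$, the first membership condition $d(x_0,y)>r$ follows from $s-\phi(s)>r$, which holds once $s$ is large enough by sublinearity of $\phi$. For the second condition $d(x',p_r(y))<\delta$, the key tool is CAT(0) convexity: if $\alpha,\beta$ are the unit-speed geodesic rays from $x_0$ to $x$ and to $y$ respectively, then $t\mapsto d(\alpha(t),\beta(t))$ is convex and vanishes at $t=0$, so
\[
d\bigl(p_r(x),p_r(y)\bigr) \;=\; d(\alpha(r),\beta(r)) \;\leq\; \frac{r}{t}\,d(\alpha(t),\beta(t))
\]
for any $0<r\leq t\leq \min(s,d(x_0,y))$. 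Taking $t$ equal to that minimum and bounding $d(\alpha(t),\beta(t))\leq 2\phi(s)$ via the triangle inequality (using $d(x,y)<\phi(s)$ together with $|s-d(x_0,y)|<\phi(s)$) yields
\[
d\bigl(p_r(x),p_r(y)\bigr) \;\leq\; \frac{2r\,\phi(s)}{s-\phi(s)},
\]
which tends to $0$ as $s\to\infty$ by sublinearity of $\phi$.

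Choosing $T>0$ large enough that both $s-\phi(s)>r$ and $2r\phi(s)/(s-\phi(s))<\delta$ hold whenever $s>T$ then completes the argument: for every such $x$, $B(x,\phi(s))\subseteq V(p_r(x),\delta)\subseteq U$ for some $U\in\cu$.

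The main obstacle is the projection estimate in the second paragraph. The $1$-Lipschitz property of $p_r$ alone is too weak, since it yields only $d(p_r(x),p_r(y))<\phi(s)$, which need not be smaller than $\delta$. The essential improvement comes from CAT(0) convexity of the distance between two geodesic rays, which forces rays that are close at the large radius $s$ to be considerably closer at the fixed inner radius $r$, producing the $\phi(s)/s$-type decay where sublinearity of $\phi$ pays off.
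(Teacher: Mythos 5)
Your argument is correct and follows essentially the same route as the paper's: both reduce to Lemma \ref{Lemma: Cover of Boundary} and then use the CAT(0) convexity of $t\mapsto d(\alpha(t),\beta(t))$ along radial geodesics (the paper phrases this as the comparison inequality for $\triangle x_0x'y'$) to show that projection to a fixed inner radius contracts $B(x,\phi(s))$ by a factor of order $1/s$, which sublinearity of $\phi$ then beats. The only differences are bookkeeping ones: you project to radius $R+1$ and compare at $t=\min(s,d(x_0,y))$ with a bound of $2\phi(s)$, while the paper compares at $t=s-\phi(s)$ with a bound of $\phi(s)$ and uses the normalization $\delta<1/R$.
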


\begin{proof}
Choose $R\gg0$ and $\delta>0$ as in the previous lemma. We can assume that
$\delta< \frac{1}{R}$. By sublinearity, choose $T>0$ so that $\frac{\phi( t)
}{t-\phi( t) }<\delta^{2}$ and $t-\phi( t) >R$ for all $t\geq T$. It suffices
to prove:\medskip

\noindent\textsc{Claim. }\emph{If} $d( x_{0},x) >T$\emph{, then }$B(x,\phi(d(
x_{0},x))) \subseteq V(p_{R}(x),\delta)$\emph{.}\medskip

Let $y\in B(x,\phi(d(x_{0},x)))$. We have that $d(x,y))<\phi(d(x_{0},x))$ and
$d(x_{0},x)-\phi(d(x_{0},x))>R$. Let $x^{\prime}=p_{d(x_{0},x)-\phi
(d(x_{0},x))}(x)$ and $y^{\prime}=p_{d(x_{0},x)-\phi(d(x_{0},x))}(y)$. Since
projection does not increase distances, $d(x^{\prime},y^{\prime})<\phi
(d(x_{0},x))$. Then
\begin{align*}
\frac{d(p_{R}(x^{\prime}),p_{R}(y^{\prime}))}{1/\delta} &  \leq\frac
{d(p_{R}(x^{\prime}),p_{R}(y^{\prime}))}{R}\\
&  \leq\frac{d(x^{\prime},y^{\prime})}{d(x_{0},x)-\phi(d(x_{0},x))}%
\text{\quad(CAT(0) inequality for }\triangle x_{0}x^{\prime}y^{\prime
}\text{.)}\\
&  \leq\frac{\phi(d(x_{0},x))}{d(x_{0},x)-\phi(d(x_{0},x))}\\
&  <\delta^{2}%
\end{align*}
It follows that $d(p_{R}(x^{\prime}),p_{R}(y^{\prime}))<\delta$; and since
$p_{R}(x^{\prime})=p_{R}(x)$ and $p_{R}(y^{\prime})=p_{R}(y)$, $y\in
V(p_{R}(x),\delta)$.
\end{proof}


\section{The Hilbert cube and Hilbert cube
manifolds\label{Section: The Hilbert cube and Hilbert cube manifolds}}

To begin this section, we provide a brief discussion of a simple CAT(0) space
that will play a useful role in our main theorems. Let $I^{\omega}$ denote the
infinite product $%
{\textstyle\prod_{i=0}^{\infty}}
\left[  0,\frac{1}{2^{i}}\right]  $ endowed with the metric $d\left(  \left(
x_{i}\right)  ,\left(  y_{i}\right)  \right)  =\left(
{\textstyle\sum}
\left\vert x_{i}-y_{i}\right\vert ^{2}\right)  ^{1/2}$. This metric induces
the standard product topology, so $I^{\omega}$ is just a metrized version of
the \emph{Hilbert cube}. For each nonnegative integer $n$, endow $I^{n}=%
{\textstyle\prod_{i=0}^{n-1}}
\left[  0,\frac{1}{2^{i}}\right]  \subseteq%
\mathbb{R}
^{n}$ with the subspace metric, where $%
\mathbb{R}
^{n}$ denotes $n$-dimensional Euclidean space. Clearly $I^{n}$ is CAT(0) and
the obvious inclusion $I^{n}\hookrightarrow I^{\omega}$ is an isometric
embedding. It follows that $I^{\omega}$ is CAT(0) since a \textquotedblleft
fat triangle\textquotedblright\ in $I^{\omega}$ could otherwise be projected
into $I^{n}$ (for large $n$) to obtain a fat triangle in $I^{n}$.

If $\left(  X,d\right)  $ is a proper CAT(0) space, then so is $X\times
I^{\omega}$, under the $\ell_{2}$ metric. Moreover, $\partial_{\infty}\left(
X\times I^{\omega}\right)  \approx\partial_{\infty}X$; in fact, for arbitrary
$p\in I^{\omega}$, $X\times\left\{  p\right\}  \hookrightarrow X\times
I^{\omega}$ extends continuously to an inclusion map $\widehat{X}%
\times\left\{  p\right\}  \hookrightarrow\widehat{X\times I^{\omega}}$ which
is a homeomorphism between visual boundaries.\medskip

A \emph{Hilbert cube manifold} is a separable metric space with the property
that every point has a neighborhood homeomorphic to $I^{\omega}$. A surprising
fact about $I^{\omega}$ is that it is homogeneous, i.e., for any $x,y\in
I^{\omega}$ there exists a homeomorphism of $I^{\omega}$ taking $x$ to $y$.
From there, it is easy to see that (as in finite dimensions) all connected
Hilbert cube manifolds are homogeneous.

In Section \ref{Section: Graphs of nonpositively curved n-manifolds}, we will
make essential use of some classical theorems from the topology of Hilbert
cube manifolds. We state those results here for easy access.

\begin{Theorem}
[\cite{Wes71}]If $X$ is a locally finite CW complex then $A\times I^{\omega}$
is a Hilbert cube manifold.
\end{Theorem}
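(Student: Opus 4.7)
The statement is classical, attributed to \cite{Wes71}, so the plan is to outline the now-standard two-step approach to reprove it. I will interpret the theorem as ``$X\times I^\omega$ is a Hilbert cube manifold whenever $X$ is a locally finite CW complex'' (the $A$ in the statement appears to be a typo for $X$).

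First I would verify that a locally finite CW complex $X$ is a locally compact, separable, metrizable ANR. Local compactness is immediate from local finiteness, since every point of $X$ has a neighborhood contained in a finite subcomplex; separability and metrizability follow because the ambient finite subcomplexes are finite CW complexes. The ANR property is the content of Hanner's theorem applied inductively over the skeleta: each closed cell is a compact AR, gluing two ANRs along a common closed ANR subspace yields an ANR, and paracompactness together with local finiteness lets one patch together the resulting local retractions to a global neighborhood retraction of $X$ in any embedding into a normed space.

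Second I would produce a Hilbert cube chart at every point $(x,p)\in X\times I^\omega$. Using local finiteness, choose a compact AR neighborhood $N$ of $x$ in $X$ (for instance, the closed star of $x$ inside a finite subcomplex containing $x$ in its interior). Then $N\times I^\omega$ is a compact AR, and the key observation is that a product with $I^\omega$ automatically satisfies Toru\'nczyk's \emph{disjoint $n$-cells property} for every $n$: given two maps $f_1,f_2\colon I^n\to N\times I^\omega$ and any $\varepsilon>0$, choose two distinct points $q_1,q_2\in I^\omega$ that differ from the $I^\omega$-coordinate of $f_1,f_2$ by less than $\varepsilon$, and push $f_i$ into the slice $N\times\{q_i\}$; the resulting maps have disjoint images. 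Toru\'nczyk's characterization of the Hilbert cube then gives $N\times I^\omega\cong I^\omega$, which is a Hilbert cube neighborhood of $(x,p)$ in $X\times I^\omega$.

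The main obstacle is that the second step rests on Toru\'nczyk's characterization (equivalently, the $I^\omega$-stability theorem of Chapman and Edwards), which is a deep result; in the spirit of the present paper I would simply cite \cite{Cha75} or \cite{vMi89} rather than attempt to reprove it. The only part that really wants care is the ANR verification for arbitrary locally finite CW complexes, where local finiteness is precisely what prevents cells from accumulating in a way that would break the inductive patching argument.
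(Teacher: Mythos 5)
The paper offers no proof of this statement: it is quoted directly from West \cite{Wes71} (with a typo --- the hypothesis should name $A$, not $X$). West's original argument is an elementary induction over skeleta, resting on the facts that $I^{n}\times I^{\omega}\approx I^{\omega}$ and that mapping cylinders of maps between Hilbert cube factors are again Hilbert cube factors, so that attaching a cell and crossing with $I^{\omega}$ preserves the $Q$-manifold property. Your route instead deduces the statement from the strictly stronger Theorem 3.3 of the paper (the Edwards--Toru\'{n}czyk theorem that a locally compact ANR times $I^{\omega}$ is a Hilbert cube manifold). That is a legitimate modern derivation, but it is anachronistic as a proof of West's 1971 theorem and reverses the logical economy of the paper, which explicitly notes that Theorem 3.3 is \emph{not} needed for the main results while Theorem 3.1 is.

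Two steps as written would fail. First, in verifying the disjoint cells property you cannot choose a single point $q_{i}\in I^{\omega}$ within $\varepsilon$ of ``the $I^{\omega}$-coordinate of $f_{i}$'': that coordinate is a map $I^{n}\rightarrow I^{\omega}$ whose image need not have small diameter, so pushing $f_{i}$ into a slice $N\times\{q_{i}\}$ can move it a definite amount. The standard repair is to write $I^{\omega}=\prod_{i<k}\left[0,2^{-i}\right]\times\prod_{i\geq k}\left[0,2^{-i}\right]$ with the tail factor of diameter less than $\varepsilon$, and to replace only the tail coordinates of $f_{1},f_{2}$ by two distinct constants; this moves each map by less than $\varepsilon$ and separates the images. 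Second, ``the closed star of $x$'' is a simplicial notion with no meaning in a general CW complex, and a point of a locally finite CW complex need not have a compact \emph{AR} neighborhood: the natural compact neighborhoods are finite subcomplexes containing $x$ in their interiors, which are compact ANRs but typically not contractible, and arbitrary small closed neighborhoods need not be ANRs at all. The fix is to use such a finite subcomplex $N$ and apply Toru\'{n}czyk's characterization of $Q$-manifolds among compact ANRs \cite{Tor80} (rather than his characterization of $I^{\omega}$ itself) to conclude that $N\times I^{\omega}$ is a Hilbert cube manifold, whence $(x,p)$ has a neighborhood homeomorphic to $I^{\omega}$. With these two repairs, and the standing convention that the complexes in question are countable (otherwise $A\times I^{\omega}$ is not separable), your argument goes through.
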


\begin{Theorem}
[\cite{Cha74}]A map $f:A\rightarrow B$ between finite CW complexes is a simple
homotopy equivalence (in the sense of \cite{Whi50} and \cite{Coh73}) if and
only if \thinspace$f\times\operatorname*{id}_{I^{\omega}}:A\times I^{\omega
}\rightarrow B\times I^{\omega}$ is homotopic to a homeomorphism.
\end{Theorem}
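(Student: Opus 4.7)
The plan is to treat the two implications by different means: the reverse direction reduces to topological invariance of Whitehead torsion on Hilbert cube manifolds, while the forward direction requires an absorption argument using the full strength of Hilbert cube manifold theory.

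For the reverse direction, suppose $f\times\id_{I^{\omega}}$ is homotopic to a homeomorphism $h\colon A\times I^{\omega}\to B\times I^{\omega}$. One first extends the definition of Whitehead torsion to homotopy equivalences between compact Hilbert cube manifolds (via finite-dimensional approximation by cubes $I^{n}$ and stabilization, or directly using a handle calculus on Hilbert cube manifolds). One checks: (i) the product map satisfies $\tau(g\times\id_{I^{\omega}})=\tau(g)$ for homotopy equivalences $g$ between finite CW complexes, because the projection $X\times I^{\omega}\to X$ is built out of elementary collapses obtained from exhausting $I^{\omega}$ by the cubes $I^{n}$; and (ii) any homeomorphism between compact Hilbert cube manifolds has trivial Whitehead torsion, proved by subdividing via a handle decomposition into sufficiently small local pieces and showing each contributes trivially to $\mathrm{Wh}(\pi_{1})$. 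Combining these with homotopy invariance of $\tau$ yields $\tau(f)=\tau(f\times\id_{I^{\omega}})=\tau(h)=0$.

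For the forward direction, use the fact that any simple homotopy equivalence is, up to homotopy, a finite composition of elementary expansions and their inverses, so it suffices to treat the case in which $B$ is obtained from $A$ by a single elementary expansion $B=A\cup_{\phi}D^{n}$, where $\phi\colon\partial_{-}D^{n}\to A$ is a cellular embedding of a free face. By West's theorem both $A\times I^{\omega}$ and $B\times I^{\omega}$ are Hilbert cube manifolds, and the attached piece $D^{n}\times I^{\omega}$ is itself homeomorphic to $I^{\omega}$. Using Chapman's $Z$-set unknotting theorem together with Anderson's homogeneity of $I^{\omega}$, the image of $\partial_{-}D^{n}\times I^{\omega}$ can be isotoped into an arbitrarily thin collar of a $Z$-set face in $A\times I^{\omega}$; a standard collar push then produces a homeomorphism $B\times I^{\omega}\to A\times I^{\omega}$ that agrees with $f^{-1}\times\id$ off a small neighborhood of the attaching region and is homotopic to $f^{-1}\times\id$ by the straight-line homotopy within each cell.

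The main obstacle is the absorption step: one needs $Z$-set unknotting in a sufficiently uniform form to push a freshly attached handle into an arbitrarily small neighborhood of its attaching face, and one needs these absorbing homeomorphisms to compose predictably along an elementary expansion sequence. This is precisely where the deep apparatus of Hilbert cube manifold topology — Anderson–Chapman homogeneity, $Z$-set unknotting, and the Toru\'{n}czyk characterization — becomes indispensable, and it is also why the theorem genuinely fails in finite dimensions: stabilizing by $I^{\omega}$ is what creates enough transverse room to convert every elementary expansion into an ambient isotopy.
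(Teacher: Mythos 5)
This statement is quoted verbatim from Chapman \cite{Cha74}; the paper offers no proof of it and uses it as a black box, so there is no internal argument to compare your proposal against. Judged on its own terms, your forward direction is a reasonable outline: reducing to a single elementary expansion and then absorbing the attached cell $D^{n}\times I^{\omega}\approx I^{\omega}$ into a collar of its attaching face is essentially West's mapping-cylinder theorem \cite{Wes71}, and the $Z$-set machinery you invoke is the right toolkit (one small inaccuracy: for a CW elementary expansion the attaching map of $\partial D^{n}$ away from the free face need not be an embedding, so the ``straight-line homotopy within each cell'' needs to be phrased more carefully).

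The reverse direction, however, contains a genuine gap. Your step (i) is harmless --- once the simple homotopy type of $X\times I^{\omega}$ is \emph{defined} via the projection to $X$, the identity $\tau(g\times\id_{I^{\omega}})=\tau(g)$ is essentially a tautology --- so the entire weight of the implication rests on step (ii), the assertion that every homeomorphism of compact Hilbert cube manifolds has trivial Whitehead torsion. But that assertion \emph{is} the topological invariance of Whitehead torsion, i.e., the principal content of \cite{Cha74}; restating it as a lemma does not prove it. The justification you offer --- subdivide via a handle decomposition into small pieces, each contributing trivially to $\mathrm{Wh}(\pi_{1})$ --- does not work as stated: Whitehead torsion has no a priori local character (this is exactly why its topological invariance was open for two decades), the sum formula for torsion requires decompositions along subcomplexes respected by the map, and a homeomorphism respects no cell or handle structure. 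Making a ``control'' argument of this kind rigorous is the hard analytic core of Chapman's proof (and of the later controlled-topology proofs), and it cannot be dispatched in a sentence. As written, the reverse direction is circular.
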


Building upon his own work, along with work by West \cite{Wes77} and others,
Chapman \cite{Cha77} used Hilbert cube technology to extend simple homotopy
theory to the category of compact ANRs. From there, Edwards, building on work
by Toru\'{n}czyk \cite{Tor80}, \cite{Tor81}, obtained the following
generalization of the above theorems. Strictly speaking, we do not need this generalization for our main theorem, but believe it could be useful in attacking the problems posed in Section \ref{Section: Some open questions}.

\begin{Theorem}
The product of a locally compact ANR with $I^{\omega}$ is a Hilbert cube
manifold. Moreover, a map $f:A\rightarrow B$ between compact ANRs is a simple
homotopy equivalence if and only if \thinspace the map $f\times
\operatorname*{id}_{I^{\omega}}:A\times I^{\omega}\rightarrow B\times
I^{\omega}$ between Hilbert cube manifolds is homotopic to a homeomorphism.
\end{Theorem}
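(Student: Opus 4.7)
The plan is to reduce both assertions to their finite CW complex counterparts (the two theorems cited immediately above) by combining Toru\'{n}czyk's topological characterization of Hilbert cube manifolds \cite{Tor80}, \cite{Tor81} with West's theorem \cite{Wes77} that every compact ANR has the homotopy type of a finite CW complex.

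For the first assertion I would invoke Toru\'{n}czyk's criterion: a locally compact ANR $Y$ is a Hilbert cube manifold if and only if it satisfies the disjoint cells property, i.e., for every $n$ and every $\epsilon>0$ any two maps $I^{n}\rightarrow Y$ can be $\epsilon$-approximated by maps with disjoint images. Now $A\times I^{\omega}$ is plainly a locally compact ANR, so only the disjoint cells property needs to be checked. Given $f=(f_{1},f_{2})$ and $g=(g_{1},g_{2})$ from $I^{n}$ into $A\times I^{\omega}$, apply the disjoint cells property of $I^{\omega}$ itself (it is a Hilbert cube manifold) to produce $\epsilon$-approximations $\tilde{f}_{2},\tilde{g}_{2}$ of $f_{2},g_{2}$ with disjoint images in $I^{\omega}$. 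Then $(f_{1},\tilde{f}_{2})$ and $(g_{1},\tilde{g}_{2})$ $\epsilon$-approximate $f$ and $g$ and have disjoint images in $A\times I^{\omega}$, so Toru\'{n}czyk's theorem delivers the Hilbert cube manifold conclusion.

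For the second assertion, the extension of Chapman's CW-complex theorem to compact ANRs is precisely the content of \cite{Cha77}. The key additional input is \cite{Wes77}, which both supplies finite CW models $K_{A}\simeq A$ and $K_{B}\simeq B$ and provides a notion of simple homotopy equivalence between compact ANRs consistent with the classical CW definition. With those models in hand, one replaces $f$ by a cellular representative $f':K_{A}\rightarrow K_{B}$ whose simple-homotopy status agrees with that of $f$, applies the CW-complex version of the theorem to $f'$, and uses that the ANR-to-CW approximations $K_{A}\leftrightarrow A$ and $K_{B}\leftrightarrow B$ are themselves simple (hence, after multiplication by $\id_{I^{\omega}}$, homotopic to homeomorphisms by the same CW-complex result) to transport the resulting homeomorphism back to $A\times I^{\omega}\rightarrow B\times I^{\omega}$. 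Conversely, any homeomorphism between Hilbert cube manifolds has vanishing Whitehead torsion, so if $f\times\id_{I^{\omega}}$ is homotopic to a homeomorphism then $f$ is already simple.

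The main obstacle in a direct approach is Toru\'{n}czyk's characterization itself. Verifying the disjoint cells property in our situation is essentially trivial, but the nontrivial direction of Toru\'{n}czyk's theorem --- that the disjoint cells property forces the Hilbert cube manifold structure --- is the technical heart of infinite-dimensional topology, resting on delicate $\cz$-set approximation techniques developed by Anderson, Chapman, Toru\'{n}czyk, Edwards, and others. The outline above should therefore be regarded as a reduction to these deep foundational results rather than as a proof from first principles.
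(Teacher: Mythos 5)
The paper does not prove this statement at all: it is quoted as a classical theorem, attributed to Edwards (building on Toru\'{n}czyk \cite{Tor80}, \cite{Tor81}, Chapman \cite{Cha77}, and West \cite{Wes77}), with \cite{Edw80} and \cite{vMi89} as the standing references. Your outline is therefore not comparable to an argument in the paper, but it is a correct account of how the result is established in that literature, and it matches the attributions the authors give. The verification of the disjoint cells property for $A\times I^{\omega}$ by perturbing only the $I^{\omega}$-coordinate is exactly right (disjointness in one factor forces disjointness of the products), and reducing the second assertion to the finite CW case via West's finiteness theorem and Chapman's simple homotopy theory for compact ANRs is the standard route. One historical remark: Edwards' original proof of the ANR theorem predates Toru\'{n}czyk's characterization and did not go through the disjoint cells property; the DCP argument you give is the modern streamlined proof as presented in \cite{Edw80} and \cite{vMi89}. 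Either way your reduction is valid.

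The one place where you should tread carefully is the second assertion, where there is a potential circularity. You justify transporting the homeomorphism from $K_{A}\times I^{\omega}\rightarrow K_{B}\times I^{\omega}$ back to $A\times I^{\omega}\rightarrow B\times I^{\omega}$ by asserting that the West approximations $K_{A}\rightarrow A$ and $K_{B}\rightarrow B$ are ``simple, hence homotopic to homeomorphisms after crossing with $I^{\omega}$'' --- but ``simple $\Rightarrow$ homotopic to a homeomorphism after $\times I^{\omega}$'' is (half of) the statement being proved. The honest order of logic, which is Chapman's in \cite{Cha77}, is: first define the torsion of a homotopy equivalence of compact ANRs via arbitrary finite CW models, prove well-definedness using the topological invariance of Whitehead torsion \cite{Cha74} together with West's homeomorphisms $A\times I^{\omega}\approx K_{A}\times I^{\omega}$, and only then deduce the equivalence ``simple $\Leftrightarrow$ $f\times\operatorname{id}_{I^{\omega}}$ homotopic to a homeomorphism'' from the CW case. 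Since you explicitly frame your argument as a reduction to \cite{Cha77} rather than a proof from first principles, this is a presentational issue rather than a gap, but the sentence as written inverts the logical dependence.
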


\section{$\mathcal{Z}$-structures and compressible spaces}

The following is our main definition. Recall that a function $f: X \rightarrow
Y$ is proper if preimages of compact sets are compact.

\begin{definition}
\label{d:compressible} A proper metric space $Y$ is \emph{compressible} if for
any proper function $\psi:\rr^{+}\rightarrow\rr^{+}$, there is a homeomorphism
$h_{\psi}:Y\rightarrow Y$ so that for every compact set $K\subset Y$ with
$\diam(K)<\psi(R),\diam(h_{\psi}(K))<\phi(R)$ for $\phi:\rr^{+}\rightarrow
\rr^{+}$ a sublinear function. We say $h_{\psi}$ is a \emph{compressing
homeomorphism}.
\end{definition}

Of course, it suffices to check compressibility for $K$ being two points, but
it is convenient to state it for all compact sets. For applications to
geometric group theory, the following is obvious, but useful.

\begin{Proposition}\label{p:qi}
If a map $f:X\rightarrow Y$ between proper metric spaces is both a
homeomorphism and a quasi-isometry, and $X$ is compressible, then so is $Y$.
\end{Proposition}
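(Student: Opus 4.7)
The plan is to transport the compressing homeomorphism from $X$ to $Y$ by conjugation through $f$, with a modest adjustment of the control function to absorb the quasi-isometry constants. Let $(A,B)$ be quasi-isometry constants for $f$, so that
\[
\tfrac{1}{A} d_X(x,x') - B \;\le\; d_Y(f(x),f(x')) \;\le\; A\, d_X(x,x') + B
\]
for all $x,x' \in X$.

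Given a proper function $\psi:\rr^+\rightarrow\rr^+$ on the $Y$ side, I would first define the adjusted function $\psi'(R) := A\psi(R) + AB$ on the $X$ side. This remains proper since $\psi$ is proper and the adjustment is an affine change. Apply compressibility of $X$ to $\psi'$ to obtain a compressing homeomorphism $h_{\psi'}:X\rightarrow X$ and a sublinear $\phi':\rr^+\rightarrow\rr^+$ witnessing the compression. Then set
\[
h_\psi := f \circ h_{\psi'} \circ f^{-1} : Y \rightarrow Y,
\]
which is a homeomorphism as a composition of homeomorphisms.

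To check the compression property, suppose $K\subset Y$ is compact with $\diam(K) < \psi(R)$. Since $f$ is a homeomorphism, $f^{-1}(K)$ is compact, and by the quasi-isometry inequality
\[
\diam(f^{-1}(K)) \;\le\; A\,\diam(K) + AB \;<\; A\psi(R) + AB \;=\; \psi'(R).
\]
By choice of $h_{\psi'}$ we therefore have $\diam(h_{\psi'}(f^{-1}(K))) < \phi'(R)$. Applying the quasi-isometry inequality again, this time in the forward direction,
\[
\diam(h_\psi(K)) \;=\; \diam(f(h_{\psi'}(f^{-1}(K)))) \;\le\; A\phi'(R) + B.
\]
Setting $\phi(R) := A\phi'(R) + B$, this is still sublinear (sublinearity is preserved under positive affine scaling), so $h_\psi$ is a compressing homeomorphism for $\psi$ on $Y$, witnessed by $\phi$.

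There is no real obstacle beyond bookkeeping: the only thing to track is that (i) the inflated control function $\psi'$ stays proper, (ii) the image-diameter bound pushed forward by $f$ stays sublinear, and (iii) the conjugate of a homeomorphism by a homeomorphism is again a homeomorphism. All three are immediate, so the proposition reduces to this one-line conjugation construction.
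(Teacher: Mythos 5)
Your conjugation argument is correct: the paper states this proposition without proof (calling it ``obvious''), and the intended argument is exactly the one you give, namely transporting the compressing homeomorphism via $f \circ h_{\psi'} \circ f^{-1}$ and absorbing the quasi-isometry constants into the control functions, with properness of $\psi'$ and sublinearity of $A\phi' + B$ both preserved under affine changes. All steps check out, including the strict inequality $\diam(h_\psi(K)) < A\phi'(R) + B$ needed to match the definition.
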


We can now state our main technical theorem, which generalizes the main result
of \cite{gmt}.

\begin{Theorem}\label{t:main}
Suppose $G$ acts properly and cocompactly on $X \times Y$, where $X$ and $Y$ are CAT(0) and $Y$ is compressible. Let
$(x_0, y_0)$ be a basepoint in $X \times Y$. Let $\pi_X: X \times Y \rightarrow X \times y_0$ and $\pi_Y: X \times Y \rightarrow x_0 \times Y$ be the projections onto each factor. Assume that for a fixed compact $K$ in $X \times Y$ with $GK = X \times Y$,
\begin{enumerate}
\item There is $S > 0$ so that $\diam(\pi_X(gK)) < S$ for all $g \in G$.
\item There is a proper function $\psi: \rr^+ \rightarrow \rr^+$
so that if $\pi_X(gK) \subset B_X(x_0, R)$, then $\diam \pi_Y(gK) < \psi(R)$.
\end{enumerate}
Then there exists a proper cocompact action of $G$ on $X \times Y$ so that $(X \times Y, \partial_\infty(X \times Y))$ is a $\mathcal{Z}$-structure for $G$.
\end{Theorem}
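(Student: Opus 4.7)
My plan is to construct the new $G$-action by conjugating the given isometric action with a homeomorphism that compresses the $Y$-factor, and then to verify the four $\cz$-structure axioms with respect to the original CAT(0) compactification $\overline{X\times Y}$. Let $h_\psi : Y\to Y$ be a compressing homeomorphism for the proper function $\psi$ of hypothesis (2), with associated sublinear function $\phi$ as in Definition \ref{d:compressible}; set $H = \mathrm{id}_X\times h_\psi$ and define the conjugated action $g\star p := H(g\cdot H^{-1}(p))$. This action is proper and cocompact by homeomorphisms. Axioms (1) and (2) hold since $\overline{X\times Y}$ is a compact AR and $\partial_\infty(X\times Y)$ is a $\cz$-set in it via CAT(0) geodesic contractions, and axiom (3) follows from \cite{amn} after replacing the CAT(0) metric with a topologically equivalent one under which the new action is by isometries (the cone topology on the compactification is unaffected by this metric swap).

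The substantive work is verifying the nullity condition. For the new fundamental domain $F := H(K)$ one has $g\star F = H(gK)$. Let $R_g$ be the smallest radius with $\pi_X(gK)\subset B_X(x_0, R_g)$. Hypothesis (1) gives $\diam \pi_X(gK) \leq S$, so every point of $g\star F$ has $X$-distance at least $R_g-S$ from $x_0$, and hence $d((x_0, y_0), g\star F) \geq R_g - S$. Hypothesis (2) gives $\diam\pi_Y(gK) < \psi(R_g)$, so compressibility yields $\diam\pi_Y(g\star F) < \phi(R_g)$ and therefore $\diam(g\star F) < S + \phi(R_g)$. After replacing $\phi$ by its monotone envelope and substituting $R_g \leq d((x_0, y_0), g\star F) + S$, this becomes a single sublinear bound on $\diam(g\star F)$ in terms of the distance from the basepoint; Lemma \ref{l:cat0angle} then places all but finitely many $g\star F$ inside a single element of any prescribed open cover of $\overline{X\times Y}$. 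To extend to an arbitrary compact $K'\subset X\times Y$, cover $H^{-1}(K')$ by finitely many translates $\{h_i K\}_{i=1}^m$, giving $g\star K'\subset\bigcup_i(gh_i)\star F$; each piece obeys the previous sublinear bound, and the pairwise separations $d((gh_i)\star p_0, (gh_j)\star p_0)$ of fixed basepoints are uniformly bounded in $g$ (the $X$-components come from the $d$-isometric original orbit via the $1$-Lipschitz projection, and the $Y$-components are $h_\psi$-images of points at bounded pairwise distance, controlled by applying compressibility to two-point sets). Combining these gives a sublinear diameter bound for $g\star K'$, and a second application of Lemma \ref{l:cat0angle} yields the nullity condition for $K'$.

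The main obstacle I expect is executing the sublinearity estimates uniformly: a single compressing homeomorphism $h_\psi$, chosen once based on $\psi$ alone, must produce diameter growth that is sublinear in the CAT(0) distance to the basepoint for \emph{every} compact subset of $X\times Y$, not merely for the given fundamental domain $K$. The reduction step above works precisely because enlarging from $K$ to a general $K'$ perturbs both the relevant radii and the pairwise separations of translates by constants depending on $K'$ alone, leaving the sublinear growth estimates intact.
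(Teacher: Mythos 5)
Your proposal is correct and follows essentially the same route as the paper: conjugate the given action by $\mathrm{id}_X\times h_\psi$, observe that translates of the new fundamental domain lie in products of a uniformly bounded $X$-ball with a $Y$-ball of radius sublinear in the distance to the basepoint, and invoke Lemma \ref{l:cat0angle}. You simply supply more of the routine details (the verification of axioms (1)--(3) via \cite{amn}, the monotone-envelope bookkeeping, and the reduction of the nullity condition for arbitrary compact sets to the case of the fundamental domain) that the paper leaves implicit.
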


\begin{proof}
Let $h_{\psi}:Y\rightarrow Y$ be a compressing homeomorphism for $\psi$, and
extend this to the homeomorphism $H_{\psi}:=\left(  \operatorname*{id}%
_{X},h_{\psi}\right)  :X\times Y\rightarrow X\times Y$. Let $f:G\rightarrow
\Homeo(X\times Y)$ be the given action. Now, modify the action by conjugating
with $H_{\psi}$, i.e. define a new action by
\[
f^{H_{\psi}}(g):X\times Y\rightarrow X\times Y;\ f^{H_{\psi}}(g)=H_{\psi
}f(g)H_{\psi}^{-1}%
\]

This conjugated action is again proper and cocompact. We only need to verify
the nullity condition. If $K$ is a fundamental domain for the original
$G$-action, then $H_{\psi}(K)$ is a fundamental domain for the conjugated
action. Let $g\in G$, and consider a translate $f^{H_{\psi}}(g).K$. By
construction, this is contained in a product of balls $B(x,2S)\times
B(y,\phi(d(x_{0},x)))$ for sublinear $\phi$ and some $x \in X$. By Lemma
\ref{l:cat0angle}, the collection $\mathcal{B}=B(x,2S)\times B(y,\phi
(d(x_{0},x)))$ satisfies the nullity condition.
\end{proof}

We now give examples of compressible spaces.

\begin{lemma}
\label{r compressible} $\rr^{+}$ with the standard metric is compressible.
\end{lemma}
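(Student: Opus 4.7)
My plan is to construct $h_\psi$ by composing $\psi^{-1}$ with a slowly-growing homeomorphism such as $\log(1+\cdot)$. To make this work cleanly I first majorize $\psi$ by a proper \emph{superadditive} homeomorphism $\tilde\psi$ of $\rr^+$. Assuming without loss of generality that $\psi$ is locally bounded (if not, first majorize by a continuous upper bound), let $M(R):=\sup_{r\le R}\psi(r)+1$ and set
\[
\tilde\psi(R):=M(R)(2^R-1).
\]
The elementary inequality $(2^a-1)(2^b-1)\ge 0$ rearranges to $(2^a-1)+(2^b-1)\le 2^{a+b}-1$, which combined with monotonicity of $M$ gives $\tilde\psi(a)+\tilde\psi(b)\le\tilde\psi(a+b)$. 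Superadditivity forces $\tilde\psi(0)=0$, and since $\tilde\psi$ is continuous, strictly increasing and proper, it is a homeomorphism of $\rr^+$; moreover $\tilde\psi\ge\psi$ on $[1,\infty)$.

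I will then define the compressing homeomorphism
\[
h_\psi(x):=\log\!\bigl(1+\tilde\psi^{-1}(x)\bigr),
\]
which is a composition of two homeomorphisms of $\rr^+$. To verify compressibility, take $K\subset\rr^+$ compact with $\diam K<\psi(R)$, and write $a=\min K$, $b=\max K$. For $R\ge 1$, superadditivity of $\tilde\psi$ yields
\[
\tilde\psi\bigl(\tilde\psi^{-1}(a)+R\bigr)\ge a+\tilde\psi(R)\ge a+\psi(R)>b,
\]
so $\tilde\psi^{-1}(b)-\tilde\psi^{-1}(a)<R$ and therefore
\[
\diam h_\psi(K)=\log\!\left(1+\frac{\tilde\psi^{-1}(b)-\tilde\psi^{-1}(a)}{1+\tilde\psi^{-1}(a)}\right)\le\log(1+R).
\]
When $R<1$, the condition $\diam K<\psi(R)\le M(1)=\tilde\psi(1)$ feeds into the same inequality with $R$ replaced by $1$, giving $\diam h_\psi(K)\le\log 2$. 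Setting $\phi(R):=\log(2+R)$, which is uniformly sublinear (as noted after the definition), handles both cases.

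The main obstacle is the reduction step: arranging that an arbitrary proper $\psi$ can be replaced by a superadditive homeomorphism without losing the compression conclusion. Once that reduction is in place, the logarithmic compression is essentially forced by the identity $\log((1+u)/(1+v))=\log\!\bigl(1+(u-v)/(1+v)\bigr)$ and the translation-compatibility supplied by superadditivity of $\tilde\psi$.
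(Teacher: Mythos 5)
Your proof is correct and takes essentially the same route as the paper's: replace $\psi$ by an increasing, superadditive, proper majorant and then compress by post-composing its inverse with a uniformly sublinear homeomorphism. The paper works with a generic uniformly sublinear $\phi$ where you take $\log(1+\cdot)$, and your explicit majorant $\tilde\psi(R)=M(R)(2^R-1)$, which satisfies $\tilde\psi(0)=0$, simply lets you avoid the paper's piecewise definition of $\widehat{h}_{\psi}$ near the origin.
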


\begin{proof}
By choosing a larger function, we can assume the proper function $\psi
:\rr^{+}\rightarrow\rr^{+}$ is increasing and $\psi(x+y)\geq\psi(x)+\psi(y)$
for all $x,y\in\rr^{+}$. Since $\psi$ is increasing, we can define the inverse
$\psi^{-1}:[\psi(0),\infty)\rightarrow\rr^{+}$. Given an interval
$[a,a+\psi(R)]$ with $a>\psi(0)$, we have that:
\[
\psi^{-1}([a,a+\psi(R)]\subset\lbrack\psi^{-1}(a),\psi^{-1}(a)+R].
\]
Now, let the homeomorphism $\widehat{h}_{\psi}:\rr^{+}\rightarrow\rr^{+}$ be
defined by%

\[
\widehat{h}_{\psi}(x)=%
\begin{cases}
x & 0\leq x\leq\psi(0)\\
\phi(\psi^{-1}(x))+\psi(0) & x\geq\psi(0)\\
&
\end{cases}
\]
where $\phi$ is a uniformly sublinear homeomorphism with $\phi(0)=0$. It
follows that for any $a,a^{\prime}$ with $d(a,a^{\prime})<\psi(R)$,
$d(h_{\psi}(a),h_{\psi}(a^{\prime}))<\phi(R)+\psi(0)$.
\end{proof}

Note that we can also assume $\frac{\widehat{h}_{\psi}(x)}{x}$ is decreasing.
Our main examples of compressible spaces comes from the following theorem.

\begin{Theorem}\label{manifold compressible}
Let $M$ be a simply connected, nonpositively curved, Riemannian manifold.  Then $M$ is compressible.
\end{Theorem}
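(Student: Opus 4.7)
The plan is to use Cartan--Hadamard, which gives two things at once: $M$ is CAT(0), and for any chosen basepoint $x_{0}\in M$ the exponential map $\exp_{x_{0}}:T_{x_{0}}M\to M$ is a diffeomorphism. In particular, every $x\ne x_{0}$ lies on a unique unit-speed geodesic ray $c_{x}$ from $x_{0}$ with $c_{x}(r_{x})=x$, where $r_{x}=d(x_{0},x)$. Given $\psi$, I will define a compressing homeomorphism $H_{\psi}:M\to M$ by reparameterizing each such ray with the one-dimensional compression $\widehat{h}_{\psi}$ from Lemma \ref{r compressible}: set $H_{\psi}(x_{0})=x_{0}$ and $H_{\psi}(c_{x}(r))=c_{x}(\widehat{h}_{\psi}(r))$. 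As in Lemma \ref{r compressible} I may assume $\psi$ is super-additive (so $\psi^{-1}(2\psi(R))\leq 2R$), and by the remark following that lemma I may choose $\widehat{h}_{\psi}$ with $\widehat{h}_{\psi}(x)/x$ weakly decreasing and $\leq 1$ (e.g.\ take $\phi(x)=\log(x+1)$).

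To check the compression property, let $x,y\in M$ satisfy $d(x,y)<\psi(R)$ and assume $r_{x}\leq r_{y}$; since $d(x_{0},\cdot)$ is $1$-Lipschitz, $r_{y}-r_{x}<\psi(R)$. If $r_{x}<\psi(R)$ then $r_{y}<2\psi(R)$, and the triangle inequality through $x_{0}$ gives
\[
d(H_{\psi}(x),H_{\psi}(y))\leq \widehat{h}_{\psi}(r_{x})+\widehat{h}_{\psi}(r_{y})\leq 2\widehat{h}_{\psi}(2\psi(R))\leq 2(\phi(2R)+\psi(0)),
\]
which is sublinear in $R$.

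If instead $r_{x}\geq \psi(R)$, I invoke the CAT(0) convexity of the distance between geodesic rays based at $x_{0}$: for rays $c,c'$ from $x_{0}$ the function $t\mapsto d(c(t),c'(t))$ is convex with value $0$ at $0$, so $d(c(st),c'(st))\leq s\cdot d(c(t),c'(t))$ for $s\in[0,1]$. Apply this with $t=r_{x}$, $s=\widehat{h}_{\psi}(r_{x})/r_{x}\leq 1$, and $y^{*}:=c_{y}(r_{x})$ (so $d(x,y^{*})\leq d(x,y)+|r_{y}-r_{x}|<2\psi(R)$) to obtain
\[
d\bigl(H_{\psi}(x),c_{y}(\widehat{h}_{\psi}(r_{x}))\bigr)\leq \frac{\widehat{h}_{\psi}(r_{x})}{r_{x}}\cdot d(x,y^{*})<\frac{\widehat{h}_{\psi}(r_{x})}{r_{x}}\cdot 2\psi(R).
\]
The decreasing ratio together with $r_{x}\geq \psi(R)$ bounds the right side by $2\widehat{h}_{\psi}(\psi(R))=2(\phi(R)+\psi(0))$. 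Combining with $d(c_{y}(\widehat{h}_{\psi}(r_{x})),H_{\psi}(y))=|\widehat{h}_{\psi}(r_{x})-\widehat{h}_{\psi}(r_{y})|<\phi(R)+\psi(0)$ from Lemma \ref{r compressible}, the triangle inequality yields $d(H_{\psi}(x),H_{\psi}(y))<3(\phi(R)+\psi(0))$, again sublinear.

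Taking the larger of the two sublinear bounds gives a single sublinear $\phi$ that verifies the compression property. The main obstacle is the large-radius case: Case~1 is essentially the triangle inequality, but in Case~2 nonpositive curvature does real work, using the CAT(0) convexity of ray-distance to convert the one-dimensional radial compression into genuine angular contraction at large radii. Without this, radially-compressing a set at radius $r\gg \psi(R)$ and angular spread $\sim \psi(R)/r$ need not shrink its diameter at all.
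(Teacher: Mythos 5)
Your proof is correct and follows essentially the same route as the paper: a radial compression built from $\widehat{h}_{\psi}$ via the Cartan--Hadamard exponential map, with CAT(0) convexity of the distance between geodesics from the basepoint controlling the angular spread and Lemma \ref{r compressible} controlling the radial displacement. The only cosmetic differences are that you split cases by the size of $r_{x}$ rather than reducing to the equal-radius case directly, and you bound $d(x,y^{*})$ by the triangle inequality where the paper bounds $d(x,z)$ by convexity of the distance function; your intermediate point $y^{*}$ is exactly the paper's point $z$.
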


\begin{proof}
We assume the proper function $\psi:\rr^{+}\rightarrow\rr^{+}$ is increasing,
$\psi(x+y)\geq\psi(x)+\psi(y)$ for all $x,y\in\rr^{+}$, and $\widehat{h}%
_{\psi}$ is defined as in Lemma \ref{r compressible}. Fix a basepoint
$m_{0}\in M$. Let
\[
\exp_{m_{0}}:T_{m_{0}}M\cong\rr^{n}\rightarrow M
\]
be the exponential map which, by the Cartan-Hadamard Theorem, is a
diffeomorphism taking geodesic rays in $\rr^{n}$ emanating from the origin to
geodesic rays in $M$ emanating from $m_{0}$. Consider the homeomorphism
\begin{equation}
h_{\psi}:=\exp_{m_{0}}\circ h_{\psi}^{\prime}\circ\exp_{m_{0}}^{-1}%
:M\rightarrow M \label{Riemannian compression}%
\end{equation}
where $h_{\psi}^{\prime}$ restricts to $\widehat{h}_{\psi}$ on geodesic rays
in $\rr^{n}$ emanating from the origin. Roughly speaking, $h_{\psi}$ is the
homeomorphism that restricts to $\widehat{h}_{\psi}$ on geodesic rays in $M$
emanating from $m_{0}$. \medskip

\noindent\textbf{Claim. }$h_{\psi}$ \emph{is a compressing homeomorphism for
}$M$\emph{. }\medskip

Suppose that $x$ and $y$ are two points in $M$ with $d(x,y)<\psi(R)$. If
$d(x,m_{0})=d(y,m_{0})=D$, then $d(h_{\psi}(x),h_{\psi}(y))\leq\frac
{\widehat{h}_{\psi}(D)d(x,y)}{D}$ by the CAT(0) inequality. Since
$D>\frac{d(x,y)}{2}$, by our assumption that $\frac{\widehat{h}_{\psi}(x)}{x}$
is decreasing we have that:
\[
\frac{\widehat{h}_{\psi}(D)d(x,y)}{D}<\frac{\widehat{h}_{\psi}\left(
\frac{d(x,y)}{2}\right)  d(x,y)}{\frac{d(x,y)}{2}}=2\widehat{h}_{\psi}\left(
\frac{d(x,y)}{2}\right)  <2\phi(R)
\]

In general, assume that $d(x,m_{0})<d(y,m_{0})$, and choose $z$ so that
$d(x,m_{0})=d(z,m_{0})$ and $z$ lies on the same ray emanating from $m_{0}$ as
$y$. The projection of $x$ to the geodesic between $y$ and $m_{0}$ is less
than $d(x,m_{0})$ from $m_{0}$. By convexity of the distance function we have
$d(x,z)<d(x,y)$. Similarly, since metric balls are convex and $z$ is the
projection of $y$ onto the $d(x,m_{0})$-ball around $m_{0}$, we have
$d(y,z)<d(x,y)$. So, by assumption we have $d(x,z)$ and $d(z,y) < \psi(R)$. By
the above, we have that
\[
d(h_{\psi}(x),h_{\psi}(z))\leq2\widehat{h}_{\psi}(\psi(R))\leq2\phi(R)
\]
and by Lemma \ref{r compressible},
\[
d(h_{\psi}(z),h_{\psi}(y))<2\widehat{h}_{\psi}(\psi(R))\leq2\phi(R),
\]
so we have
\[
d(h_{\psi}(x),h_{\psi}(y))\leq d(h_{\psi}(x),h_{\psi}(y)) + d(h_{\psi
}(y),h_{\psi}(z)) \leq4\phi(R)
\]
and we are done.
\end{proof}

\comment{
\begin{example}
Let $(X,d_X)$ be a path metric space, and equip $CX= X \times \rr^+/X \times 0$. Let $\Phi: \rr^+ \rightarrow \rr^+$ be proper and monotone increasing. If $\gamma$ is a path in $CX$, define its length to be $$\sup\bigg\{ \sum_{j = 0}^{n-1} |t_j - t_{j+1}| + \max \{\Phi(t_j), \Phi(t_{j+1}\}d(x_j, x_{j+1})\}\bigg\}$$ where the supremum is taken over all finite sequences of points on $\gamma$.
Then define $d_{CX_\Phi}$ by taking the infimum of the lengths of paths between two points.
We claim $(CX, d_{CX_{\Phi}})$ is compressible. Let $\phi: \rr^+ \rightarrow \rr^+$ be proper, and let $h_\phi: \rr^+ \rightarrow \rr^+$ be a compressing homeomorphism. Let $H_\Phi: CX \rightarrow CX$
\end{example}
}

\begin{remark}
\label{Remark: radial compressions}A number of comments regarding compressible
spaces are in order.

\begin{enumerate}
\item For examples of compressible spaces not homeomorphic to $\mathbb{R}^{n}%
$, let $X$ be a compact metric space and $\operatorname*{Cone}_{\infty}\left(
X\right)  :=X\times\lbrack0,\infty)/X\times\left\{  0\right\}  $. There are
natural \textquotedblleft warped product" metrics that one can put on
$\operatorname*{Cone}_{\infty}\left(  X\right)  $ and, for suitable choices,
there are natural homeomorphisms which move points towards the cone point
along cone lines to produce a compressing homeomorphism. For example, $X$
could be $3$ points and $\Cone_{\infty}(X)$ the infinite tripod equipped with
the natural path metric.

\item Compressing homeomorphisms like the ones described above and in the
proof of Theorem \ref{manifold compressible} are called \emph{radial
compressions.} More specifically, if $Y$ is homeomorphic to an open cone and
$\widehat{h}:\mathbb{R}^{+}\rightarrow\mathbb{R}^{+}$ is a homeomorphism, then
the map $h:Y\rightarrow Y$ which acts as $\widehat{h}$ on each cone line is
called a \emph{radial homeomorphism. }When a CAT(0) space $Y$ admits an open
cone structure where the cone lines are geodesic rays emanating from a fixed
point $y_{0}\in Y$, the proof of Theorem \ref{manifold compressible} shows
that, for any proper function $\psi:\mathbb{R}^{+}\rightarrow\mathbb{R}^{+}$
there is a homeomorphism $\widehat{h}_{\psi}:\mathbb{R}^{+}\rightarrow
\mathbb{R}^{+}$ such that the corresponding radial homeomorphism $h_{\psi
}:Y\rightarrow Y$ is a compressing homeomorphism for $\psi$. We call such a
space \emph{radially compressible}.

\item \label{Item: compressible but not radially}For an example of a
compressible space that is not radially compressible, consider $\mathbb{R}%
^{n}\times X$, where $\mathbb{R}^{n}$ has the Euclidean metric and $X$ is
compact. More generally, the product of a compressible space with a compact
metric space will always be compressible, when given the $\ell_{2}$-metric.

\item For a noncompressible CAT(0) space, let $T$ be the universal cover of
the wedge of two circles. (For further discussion of this example, see Section
\ref{Section: Noncompressible spaces}).

\item We are particularly interested in compressibility of universal covers of
closed aspherical manifolds. As shown above, universal covers of nonpositively
curved closed Riemannian $n$-manifolds are always compressible. Our proof
generalizes to CAT(0) universal covers only when there are no points from which
geodesic rays bifurcate. In Section
\ref{Section: Noncompressible spaces} we will show that the exotic universal
covers constructed by Davis in \cite{Dav83}, many of which are CAT(0), are
noncompressible. Knowing that a universal cover is homeomorphic to
$\mathbb{R}^{n}$ does not appear to be enough; in fact, we suspect it is a
rare phenomenon that a proper metric on $\mathbb{R}^{n}$ yields a compressible space.

\item For a concrete example which exhibits our (lack of) knowledge outside
the nonpositively curved case, we do not know if the geometries NIL and SOL
are compressible, nor do we know if closed graph $3$-manifolds have
compressible universal cover.
\end{enumerate}
\end{remark}

\section{$\ce\mathcal{Z}$-structures\label{Section: EZ-structures}}

In this section we identify conditions on a compressing homeomorphism
$h_{\psi}$ and on the initial action of $G$ on $X\times Y$, which allow us to
improve the $\mathcal{Z}$-structure $\left(  \overline{X\times Y}%
,\partial_{\infty}\left(  X\times Y\right)  \right)  $ from Theorem
\ref{t:main} to an $\mathcal{EZ}$-structure. In other words, we are looking to
extend the conjugated action of $G$ on $X\times Y$ to $\partial_{\infty
}\left(  X\times Y\right)  $. Rather than striving for the most general
result, we prove a theorem that is sufficiently general for all applications
presented in this paper.

Recall that, for proper CAT(0) spaces $X$ and $Y$, $X\times Y$ (with the
$\ell_{2}$-metric) is CAT(0) with $\partial_{\infty}(X\times Y)\approx
\partial_{\infty}X\ast\partial_{\infty}Y$. One of the conditions we will
impose on the $G$-action on $X\times Y$ is that it splits as a product of $G$
actions. Neither of those action is expected to be geometric, but another
hypothesis will ensure that they extend over $\partial_{\infty}X$ and
$\partial_{\infty}Y$. Our action on $\partial_{\infty}X\ast\partial_{\infty}Y$
will be the join of those actions.

For the purposes of this section join lines of $\partial_{\infty}X\ast
\partial_{\infty}Y$ are parameterized by $\left[  0,\infty\right]  $, so as to
indicate slopes in $X\times Y$. The following lemma is based on standard
CAT(0) geometry. We leave its proof to the reader.

\begin{lemma}
\label{Lemma: generalizing joins}Let $\left(  X,d_{X}\right)  $ and $\left(
Y,d_{Y}\right)  $ be proper CAT(0) metric spaces; $\alpha:[0,\infty
)\rightarrow X$ and $\beta:[0,\infty)\rightarrow Y$ be proper topological
embeddings emanating from $x_{0}$ and $y_{0}$, respectively, and converging to
points $\overline{z}\in\partial_{\infty}X$ and $\overline{w}\in\partial
_{\infty}Y$. Let $Q=\left\{  \left(  \alpha\left(  t\right)  ,b\left(
t\right)  \right)  \mid t\in\lbrack0,\infty)\right\}  $. Then, the closure of
$Q$ in $\left(  X\times Y,d_{2}\right)  $ is $\overline{Q}=Q\cup
A_{\overline{z}\overline{w}}$ where $A_{\overline{z}\overline{w}}$ is the join
line in $\partial_{\infty}(X\times Y)$ connecting $\overline{z}$ to
$\overline{w}$. Furthermore, a proper topological ray (embedded or otherwise)
$\gamma=\left(  \gamma_{1},\gamma_{2}\right)  :[0,\infty)\rightarrow
Q\subseteq X\times Y$ converges to a point $m\in A_{\overline{z}\overline{w}}$
(of slope $m\in\lbrack0,\infty]$) if and only if
\[
\lim_{t\rightarrow\infty}\frac{d_{Y}\left(  \gamma_{2}\left(  t\right)
,y_{0}\right)  }{d_{X}\left(  \gamma_{1}\left(  t\right)  ,x_{0}\right)  }=m.
\]

\end{lemma}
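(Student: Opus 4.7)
The plan is to analyze convergence in $\overline{X \times Y}$ via the CAT(0) product structure, and then specialize to sequences and proper rays supported in $Q$.

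First I would set up the join-line parameterization. In the $\ell_{2}$-product $X \times Y$, every geodesic ray from $(x_0, y_0)$ has the form $t \mapsto (a(t \cos \theta), b(t \sin \theta))$ for unit-speed geodesic rays $a$, $b$ from $x_0$, $y_0$ and some $\theta \in [0, \pi/2]$. Under the identification $\partial_\infty(X \times Y) \approx \partial_\infty X \ast \partial_\infty Y$, the join line $A_{\overline{z}\overline{w}}$ is parameterized by the slope $m = \tan \theta \in [0, \infty]$, with endpoints $\overline{z}$ (at $m = 0$) and $\overline{w}$ (at $m = \infty$).

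The technical core is then the following convergence criterion: a sequence $(x_n, y_n) \in X \times Y$ with $d((x_0, y_0),(x_n, y_n)) \to \infty$ converges in the cone topology to the slope-$m$ point of $A_{\overline{z}\overline{w}}$ if and only if $x_n \to \overline{z}$ in $\overline{X}$, $y_n \to \overline{w}$ in $\overline{Y}$, and
\[
\lim_{n \to \infty} \frac{d_Y(y_n, y_0)}{d_X(x_n, x_0)} = m.
\]
To prove this, I would use the explicit radial projection in the product: writing $A_n = d_X(x_n, x_0)$, $B_n = d_Y(y_n, y_0)$, and $D_n = \sqrt{A_n^{2} + B_n^{2}}$, one has $p_r(x_n, y_n) = (p^{X}_{r A_n/D_n}(x_n), p^{Y}_{r B_n/D_n}(y_n))$. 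Convergence in the cone topology amounts to convergence of these projections in $X$ and $Y$ for each $r$, and combining this with the ratio condition identifies the limit on the join line precisely.

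I would then apply the criterion. Since $\alpha$ converges to $\overline{z}$, any $X$-coordinate $\alpha(t_n)$ with $t_n \to \infty$ satisfies $\alpha(t_n) \to \overline{z}$, and analogously for $\beta$. Therefore boundary limits of points in $Q$ all lie on $A_{\overline{z}\overline{w}}$; the converse, realizing every slope, uses continuity and surjectivity of $t \mapsto d_X(\alpha(t), x_0)$ and $s \mapsto d_Y(\beta(s), y_0)$ from $[0, \infty)$ onto $[0, \infty)$. For a proper topological ray $\gamma = (\gamma_1, \gamma_2)$ in $Q$, properness forces at least one coordinate to infinity; the criterion then yields the slope characterization, with the degenerate cases (one coordinate bounded) matching the endpoints $\overline{z}$ and $\overline{w}$ of the join line. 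The hard part will be the convergence criterion itself: the scale factors $A_n/D_n$ and $B_n/D_n$ depend on the ratio $B_n/A_n$, so equating \emph{every projection $p_r$ converges} with \emph{$x_n \to \overline{z}$, $y_n \to \overline{w}$, and the ratio tends to $m$} requires a careful diagonal argument. Once that criterion is in hand, the rest is bookkeeping.
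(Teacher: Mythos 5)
The paper offers no proof of this lemma to compare against: the authors state that it ``is based on standard CAT(0) geometry'' and explicitly leave the proof to the reader. Your plan is the standard argument they presumably intend --- parameterize rays in the $\ell_{2}$-product as $t\mapsto(a(t\cos\theta),b(t\sin\theta))$, compute $p_{r}(x_{n},y_{n})=(p^{X}_{rA_{n}/D_{n}}(x_{n}),\,p^{Y}_{rB_{n}/D_{n}}(y_{n}))$, and reduce cone-topology convergence to convergence of the ratio $B_{n}/A_{n}$ --- and this route does work. The ``careful diagonal argument'' you anticipate is not actually needed: since $d(p^{X}_{s}(x),p^{X}_{s'}(x))=|s-s'|$ whenever $s,s'\le d(x_{0},x)$, the estimate $d\bigl(p^{X}_{r\cos\theta_{n}}(x_{n}),a(r\cos\theta)\bigr)\le r\,|\cos\theta_{n}-\cos\theta|+d\bigl(p^{X}_{r\cos\theta}(x_{n}),a(r\cos\theta)\bigr)$ disposes of the varying scale factors directly.

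Two points do need repair. First, your convergence criterion is false as an ``only if'' at the endpoints $m\in\{0,\infty\}$: a sequence with $x_{n}\to\overline{z}$ and $y_{n}$ bounded converges to the slope-$0$ point $\overline{z}$ without $y_{n}\to\overline{w}$. This is not merely cosmetic, because a proper ray in $Q$ can have one bounded coordinate (e.g.\ $\gamma(t)=(\alpha(t),y_{0})$), so the ``furthermore'' clause genuinely requires the endpoint version of the criterion in which the coordinate-convergence hypothesis is dropped on the factor whose projection radius tends to $0$. Second, the inclusion $A_{\overline{z}\overline{w}}\subseteq\overline{Q}$ holds only if $Q$ is read as the product $\alpha([0,\infty))\times\beta([0,\infty))$ rather than as the single curve $\{(\alpha(t),\beta(t))\}$ with one common parameter: for two unit-speed geodesic rays the latter accumulates only at the slope-$1$ point. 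The product reading is what the application in Section 5 requires (there $\eta_{m}=(\tfrac{1}{m}\eta_{1},\eta_{2})$ must lie in $Q$), and your ``realizing every slope'' step --- choosing $s$ and $t$ independently so that $d_{X}(\alpha(s),x_{0})$ and $d_{Y}(\beta(t),y_{0})$ hit prescribed values --- only makes sense under that reading; you should state it explicitly rather than leave the one-parameter notation in place.
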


Let $X$ and $Y$ be proper CAT(0) spaces and assume

\begin{description}
\item[i)] $h=(h_{1},h_{2}):X\times Y\rightarrow X\times Y$ is a
factor-preserving homeomorphism

\item[ii)] $h_{1}:X\rightarrow X$ is an isometry,

\item[iii)] $h_{2}:Y\rightarrow Y$ is a homeomorphism and a quasi-isometry
which extends to a homeomorphism on $\overline{Y}$, and

\item[iv)] $Y$ is radially compressible toward a basepoint $y_{0}$.
\end{description}

Let $\psi:\mathbb{R}^{+}\rightarrow\mathbb{R}^{+}$ is a proper function, and
let $h_{\psi}:Y\rightarrow Y$ be a corresponding radial compression function
based on a homeomorphism $\widehat{h}_{\psi}:\mathbb{R}^{+}\rightarrow
\mathbb{R}^{+}$ (see Remark \ref{Remark: radial compressions}). We say that
$\widehat{h}_{\psi}$ is\emph{ linearly controlled} if
\[
\lim_{t\rightarrow\infty}\frac{\widehat{h}_{\psi}g\widehat{h}_{\psi}^{-1}%
(t)}{t}=1
\]
for all linear maps $g:\rr^{+}\rightarrow\rr^{+}$.

\begin{remark}
By pre-composing $\widehat{h}_{\psi}$ with $\log(x+1)$, we can always assume
that $\widehat{h}_{\psi}$ is a linearly controlled compressing homeomorphism
for $\psi$.
\end{remark}

Let $\eta_{m}=\left(  \frac{1}{m}\eta_{1},\eta_{2}\right)  $ be a ray in
$X\times Y$, where $\eta_{1}$ and $\eta_{2}$ are geodesic rays emanating from
$x_{0}$ and $y_{0}$, and $\frac{1}{m}\eta_{1}(t)\equiv\eta_{1}(\frac{t}{m})$.
Viewing $\eta_{1}$ and $\eta_{2}$ as elements of $\partial_{\infty}X$ and
$\partial_{\infty}Y$, respectively, $\eta_{m}$ represents the point of the
join line $A_{\eta_{1}\eta_{2}}\subseteq\partial_{\infty}X\ast\partial
_{\infty}Y=\partial_{\infty}(X\times Y)$ at slope $m$ (a generic point of
$\partial_{\infty}X\ast\partial_{\infty}Y$). Next let $H_{\psi}%
=(\operatorname*{id}\nolimits_{X},h_{\psi}):X\times Y\rightarrow X\times Y$
(as in the proof of Theorem \ref{t:main}), and consider the topological ray
\[
\eta^{\prime}=H_{\psi}hH_{\psi}^{-1}\eta_{m}=\left(  h_{1}\left(  \frac{1}%
{m}\eta_{1}\right)  ,h_{\psi}h_{2}h_{\psi}^{-1}\eta_{2}\right)  .
\]
We wish to apply Lemma \ref{Lemma: generalizing joins} with $h_{1}\eta_{1}$
and $h_{\psi}h_{2}h_{\psi}^{-1}\eta_{2}$ playing the roles of $\alpha$ and
$\beta$. Clearly the radial homeomorphisms $h_{\psi}$ and $h_{\psi}^{-1}$
extend via the identity to $\partial_{\infty}Y$. Since it is an isometry,
$h_{1}$ extends to a homeomorphism of $\overline{X}$; and by hypothesis,
$h_{2}$ extends to a homeomorphism of $\overline{Y}$. Therefore $h_{1}\eta
_{1}$ and $h_{\psi}h_{2}h_{\psi}^{-1}\eta_{2}$ satisfy the hypothesis on
$\alpha$ and $\beta$, with $h_{1}\eta_{1}$ converging to $h_{1}\left(
\eta_{1}\right)  \in\partial_{\infty}X$ and $h_{\psi}h_{2}h_{\psi}^{-1}%
\eta_{2}$ converging to $h_{2}\left(  \eta_{2}\right)  \in\partial_{\infty}Y$.
Let $x_{0}^{\prime}:=h_{1}\eta_{1}\left(  0\right)  =h_{1}\left(
x_{0}\right)  $ and $y_{0}^{\prime}:=h_{\psi}h_{2}h_{\psi}^{-1}\eta_{2}\left(
0\right)  =h_{\psi}h_{2}\left(  y_{0}\right)  $.

The role of $\gamma=\left(  \gamma_{1},\gamma_{2}\right)  $ in our application
of Lemma \ref{Lemma: generalizing joins} is played by $\eta_{m}=\left(
\frac{1}{m}\eta_{1},\eta_{2}\right)  $. As such, consider
\[
\lim_{t\rightarrow\infty}\frac{d_{Y}\left(  h_{\psi}h_{2}h_{\psi}^{-1}\eta
_{2}\left(  t\right)  ,y_{0}^{\prime}\right)  }{d_{X}\left(  h_{1}(\frac{1}%
{m}\eta_{1})\left(  t\right)  ,x_{0}^{\prime}\right)  }=\lim_{t\rightarrow
\infty}\frac{\frac{d_{Y}\left(  h_{\psi}h_{2}h_{\psi}^{-1}\eta_{2}\left(
t\right)  ,y_{0}^{\prime}\right)  }{t}}{\frac{d_{X}\left(  h_{1}(\frac{1}%
{m}\eta_{1})\left(  t\right)  ,x_{0}^{\prime}\right)  }{t}}%
\]
It is easy to see that
\[
\lim_{t\rightarrow\infty}\frac{d_{X}\left(  h_{1}(\frac{1}{m}\eta_{1})\left(
t\right)  ,x_{0}^{\prime}\right)  }{t}=\frac{1}{m}%
\]
Since $h_{2}$ is a quasi-isometry, choose $K\geq1$ and $\varepsilon\geq0$ such
that
\[
\frac{1}{K}d\left(  y,y^{\prime}\right)  -\varepsilon\leq d\left(
h_{2}\left(  y\right)  ,h_{2}\left(  y^{\prime}\right)  \right)  \leq
Kd\left(  y,y^{\prime}\right)  +\varepsilon
\]
for all $y,y^{\prime}\in Y$. In particular,%

\[
\frac{1}{K}\cdot\widehat{h}_{\psi}^{-1}\left(  t\right)  -\varepsilon\leq
d\left(  h_{2}h_{\psi}^{-1}\eta_{2}\left(  t\right)  ,h_{2}\left(
y_{0}\right)  \right)  \leq K\cdot\widehat{h}_{\psi}^{-1}\left(  t\right)
+\varepsilon
\]

So, letting $C=d\left(  h_{2}\left(  y_{0}\right)  ,y_{0}\right)  $, we have
\[
\frac{1}{K}\cdot\widehat{h}_{\psi}^{-1}\left(  t\right)  -C\leq d\left(
h_{2}h_{\psi}^{-1}\eta_{2}\left(  t\right)  ,y_{0}\right)  \leq K\cdot
\widehat{h}_{\psi}^{-1}\left(  t\right)  +C
\]

Since $\widehat{h}_{\psi}$ can be taken to be monotone increasing, we have
\[
\widehat{h}_{\psi}\left(  \frac{1}{K}\cdot\widehat{h}_{\psi}^{-1}\left(
t\right)  -\left(  \varepsilon+C\right)  \right)  \leq\widehat{h}_{\psi
}\left(  d\left(  h_{2}h_{\psi}^{-1}\eta_{2}\left(  t\right)  ,y_{0}\right)
\right)  \leq\widehat{h}_{\psi}\left(  K\cdot\widehat{h}_{\psi}^{-1}\left(
t\right)  +\varepsilon+C\right)
\]
for $t$ sufficiently large.

Notice now that
\[
d\left(  h_{\psi}h_{2}h_{\psi}^{-1}\eta_{2}\left(  t\right)  ,y_{0}\right)
=h_{\psi}\left(  d\left(  h_{2}h_{\psi}^{-1}\eta_{2}\left(  t\right)
,y_{0}\right)  \right)
\]
so, for sufficiently large $t$, we have
\[
\widehat{h}_{\psi}\left(  \frac{1}{K}\cdot\widehat{h}_{\psi}^{-1}\left(
t\right)  -\left(  \varepsilon+C\right)  \right)  \leq d\left(  h_{\psi}%
h_{2}h_{\psi}^{-1}\eta_{2}\left(  t\right)  ,y_{0}\right)  \leq\widehat{h}%
_{\psi}\left(  K\cdot\widehat{h}_{\psi}^{-1}\left(  t\right)  +\varepsilon
+C\right)
\]

Now divide all three terms by $t$ and let $t\rightarrow\infty$. By the linear
control assumption on $\widehat{h}_{\psi}$, the corresponding left- and
right-hand limits are both $1$, hence the middle limit is $1$. Finally note
that
\[
d\left(  h_{\psi}h_{2}h_{\psi}^{-1}\eta_{2}\left(  t\right)  ,y_{0}\right)
-d\left(  y_{0},y_{0}^{\prime}\right)  \leq d\left(  h_{\psi}h_{2}h_{\psi
}^{-1}\eta_{2}\left(  t\right)  ,y_{0}^{\prime}\right)  \leq d\left(  h_{\psi
}h_{2}h_{\psi}^{-1}\eta_{2}\left(  t\right)  ,y_{0}\right)  +d\left(
y_{0},y_{0}^{\prime}\right)
\]
Divide all three terms by $t$ and let $t\rightarrow\infty$. Apply the above
work to again conclude that the left- and right-hand limits are $1$, so the
middle limit is $1$ as well. Putting these pieces together and applying Lemma
\ref{Lemma: generalizing joins}, the ray $\eta^{\prime}=H_{\psi}hH_{\psi}%
^{-1}\eta_{m}$ converges to the point of slope $m$ in $\partial_{\infty}%
X\ast\partial_{\infty}Y$ on the join line between $h_{1}\left(  \eta
_{1}\right)  \in\partial_{\infty}X$ and $h_{2}\left(  \eta_{2}\right)
\in\partial_{\infty}Y.$

Therefore, we have the following theorem.

\begin{Theorem}\label{t:ezstructure}
Let $X,Y$, $h$ and $h_{\psi}$ as above. Then the compressed homeomorphism $h_\psi h h_\psi^{-1}$ extends to the boundary $\partial_\infty X \ast \partial_\infty Y$. The induced homeomorphism on $\partial_\infty X \ast \partial_\infty Y$ is the join of the induced homeomorphisms on $\partial_\infty X$ and $\partial_\infty Y$.
\end{Theorem}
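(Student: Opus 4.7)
The plan is to use Lemma \ref{Lemma: generalizing joins} to read off the continuous extension of $H_{\psi}hH_{\psi}^{-1}$ to $\partial_{\infty}X\ast\partial_{\infty}Y$ one join line at a time. Concretely, for $\overline{z}\in\partial_{\infty}X$, $\overline{w}\in\partial_{\infty}Y$, and $m\in[0,\infty]$, I would declare the extension to send the slope-$m$ point of the join line $A_{\overline{z}\overline{w}}$ to the slope-$m$ point of $A_{h_{1}(\overline{z})\,h_{2}(\overline{w})}$. Here $h_{1}$ already extends to a homeomorphism of $\overline{X}$ (being an isometry), $h_{2}$ extends to a homeomorphism of $\overline{Y}$ by hypothesis (iii), and the radial maps $h_{\psi}^{\pm 1}$ extend as the identity to $\partial_{\infty}Y$, so the target point is well-defined.

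To show that this formula is forced by the action of $H_{\psi}hH_{\psi}^{-1}$ on interior rays, I would test it against the generic proper ray $\eta_{m}=(\tfrac{1}{m}\eta_{1},\eta_{2})$ that limits to the slope-$m$ point on $A_{\eta_{1}\eta_{2}}$. Its image is
\[
H_{\psi}hH_{\psi}^{-1}(\eta_{m})=\bigl(h_{1}(\tfrac{1}{m}\eta_{1}),\; h_{\psi}h_{2}h_{\psi}^{-1}\eta_{2}\bigr).
\]
The first coordinate is a geodesic ray from $h_{1}(x_{0})$ converging to $h_{1}(\eta_{1})\in\partial_{\infty}X$, and the second coordinate is a proper topological ray from $h_{\psi}h_{2}(y_{0})$; it converges to $h_{2}(\eta_{2})\in\partial_{\infty}Y$ because $h_{\psi}^{-1}\eta_{2}$ is just a radial reparameterization of $\eta_{2}$, $h_{2}$ extends to $\overline{Y}$, and $h_{\psi}$ fixes $\partial_{\infty}Y$. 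Thus the hypotheses of Lemma \ref{Lemma: generalizing joins} are met and the image boundary point is pinned down by the asymptotic ratio of the two coordinate distances.

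The main obstacle, and the content of the displayed computation preceding the theorem, is evaluating
\[
\lim_{t\to\infty}\frac{d_{Y}\bigl(h_{\psi}h_{2}h_{\psi}^{-1}\eta_{2}(t),\,h_{\psi}h_{2}(y_{0})\bigr)}{d_{X}\bigl(h_{1}(\tfrac{1}{m}\eta_{1})(t),\,h_{1}(x_{0})\bigr)}=m.
\]
The denominator is exactly $t/m$ since $h_{1}$ is an isometry. For the numerator, I would use the radiality of $h_{\psi}$ to rewrite $d_{Y}(h_{\psi}h_{2}h_{\psi}^{-1}\eta_{2}(t),y_{0})=\widehat{h}_{\psi}\bigl(d_{Y}(h_{2}h_{\psi}^{-1}\eta_{2}(t),y_{0})\bigr)$, then use the quasi-isometry hypothesis on $h_{2}$ together with $d_{Y}(h_{\psi}^{-1}\eta_{2}(t),y_{0})=\widehat{h}_{\psi}^{-1}(t)$ to sandwich the inner distance between affine functions of $\widehat{h}_{\psi}^{-1}(t)$. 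Applying the monotone $\widehat{h}_{\psi}$ and invoking linear control ($\widehat{h}_{\psi}g\widehat{h}_{\psi}^{-1}(t)/t\to 1$ for linear $g$) forces both outer bounds, divided by $t$, to tend to $1$; squeezing and absorbing the bounded shift from $y_{0}$ to $y_{0}':=h_{\psi}h_{2}(y_{0})$ yields numerator$/t\to 1$, and hence the full ratio tends to $m$.

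Finally, I would upgrade this pointwise identification to continuity on all of $\overline{X\times Y}$. Since $\overline{X\times Y}$ is compact Hausdorff and $H_{\psi}h^{-1}H_{\psi}^{-1}=(H_{\psi}hH_{\psi}^{-1})^{-1}$ satisfies the same hypotheses with $h^{-1}$ in place of $h$, the analogous construction yields a two-sided candidate extension that is bijective on $\overline{X\times Y}$. Continuity of the extension at a boundary point reduces, in the cone topology, to checking that the image of any sequence approaching a given slope-$m$ point of some $A_{\overline{z}\overline{w}}$ has the appropriate slope-$m$ limit on $A_{h_{1}(\overline{z})\,h_{2}(\overline{w})}$, and this is exactly what the slope calculation above provides after observing that the argument depends only on base-point-normalized quantities and so is stable under perturbing the representative rays. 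The resulting homeomorphism of $\partial_{\infty}X\ast\partial_{\infty}Y$ agrees on each join vertex with $h_{1}$ and $h_{2}$, so it is the join of their boundary extensions as claimed.
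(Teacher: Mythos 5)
Your proposal is correct and follows essentially the same route as the paper: apply Lemma \ref{Lemma: generalizing joins} to the image ray $\bigl(h_{1}(\tfrac{1}{m}\eta_{1}),\,h_{\psi}h_{2}h_{\psi}^{-1}\eta_{2}\bigr)$, compute the slope limit by sandwiching $d_{Y}(h_{2}h_{\psi}^{-1}\eta_{2}(t),y_{0})$ between affine functions of $\widehat{h}_{\psi}^{-1}(t)$, and invoke linear control to squeeze the ratio to $m$. Your closing paragraph on upgrading the pointwise formula to a continuous extension is a reasonable supplement to what the paper leaves implicit.
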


\section{Graphs of closed aspherical $n$%
-manifolds\label{Section: Graphs of closed aspherical n-manifolds}}

We begin to focus on our main class of examples, which can be thought of as
higher-dimensional analogues of generalized Baumslag-Solitar groups.

Suppose that $G$ is the fundamental group of a finite connected graph of
groups\emph{ }$\left(  \mathcal{G},\Gamma\right)  $, with the property that
each vertex group $G_{v}$ is the fundamental group of a closed aspherical
manifold $M_{v}$ and, for each edge $e$, the monomorphisms $G_{e}%
\overset{\phi_{e}^{-}}{\longrightarrow}G_{i(e)}$ and $G_{e}\overset{\phi
_{e}^{+}}{\longrightarrow}G_{t(e)}$ are of finite index. The primary goal in
this section is to realize $\left(  \mathcal{G},\Gamma\right)  $ with a graph
of covering spaces (as defined and developed in the appendix). The assumptions
on $\left(  \mathcal{G},\Gamma\right)  $ ensure that each $G_{e}$ can be
realized as the fundamental group of both a finite-sheeted cover $M_{e}^{-}$
of $M_{i\left(  e\right)  }$ and a finite-sheeted cover $M_{e}^{+}$ of
$M_{t\left(  e\right)  }$. These covers are homotopy equivalent, but a priori,
not homeomorphic. Being closed and aspherical, all vertex manifolds and their
covers are necessarily of the same dimension. To proceed, we need a single
edge space, for each edge $e$, which covers \emph{both} of its vertex spaces
(possibly the same space, in cases where $e$ is a loop in $\Gamma$). We will
describe two useful approaches. Each approach requires an additional hypothesis that
is conjecturally satisfied in all cases. The first is conceptually simpler; it
chooses one of the above-mentioned covers as the edge space and leaves the
chosen vertex manifolds in place. The second approach is more drastic, but has
some important benefits.\medskip

\noindent\textbf{Approach I. }\emph{Assume the Borel Conjecture holds for all
edge groups.}\medskip

At each vertex $v$ of $\Gamma$, place the aspherical manifold $M_{v}$ chosen
above, then realize $\phi_{e}^{-}$ and $\phi_{e}^{+}$ by finite-sheeted
covering projections $q_{e}^{-}:M_{e}^{-}\rightarrow M_{i(e)}$ and $q_{e}%
^{+}:M_{e}^{+}\rightarrow M_{t(e)}$. By hypothesis, there is a homeomorphism
$f:M_{e}^{-}\rightarrow M_{e}^{+}$. Let $M_{e}=M_{e}^{-}$ and define
$p_{e}^{-}=q_{e}^{-}$and $p_{e}^{+}=q_{e}^{+}\circ f$ to be the edge
maps.\medskip

\noindent\textbf{Approach II. }\emph{Assume that the Whitehead group of each
edge group }$G_{e}$\emph{ is trivial}.\medskip

Using the same notation as above and the new hypothesis, $M_{e}^{-}$ and
$M_{e}^{+}$ are simple homotopy equivalent, so by the results discussed in
Section \ref{Section: The Hilbert cube and Hilbert cube manifolds}, there is a
homeomorphism $f:M_{e}^{-}\times I^{\omega}\rightarrow M_{e}^{+}\times
I^{\omega}$. Replace each $M_{v}$ with $M_{v}\times I^{\omega}$ and for each
edge $e$, let $M_{e}^{-}\times I^{\omega}$ be the edge space (denoted simply
by $M_{e}\times I^{\omega}$ from now on). Then insert the covering maps
$p_{e}^{-}=q_{e}^{-}\times\operatorname*{id}_{I^{\omega}}$ and $p_{e}%
^{+}=\left(  q_{e}^{+}\times\operatorname*{id}_{I^{\omega}}\right)  \circ f$
to complete the realization of $\left(  \mathcal{G},\Gamma\right)  $ as a
graph of covering spaces.\medskip

Given the hypothesis and setup in Approach I, we can form the \emph{total
space}
\[
X=(\bigcup_{v}M_{v})\cup(\bigcup_{e}M_{e}\times\left[  0,1\right]  )
\]
where $M_{e}\times\left\{  0\right\}  $ and $M_{e}\times\left\{  1\right\}  $
are glued to $M_{i(e)}$ and $M_{t(e)}$ using covering maps $p_{e}^{-}%
:M_{e}\rightarrow$ $M_{i(e)}$ and $p_{e}^{+}:M_{e}\rightarrow$ $M_{t(e)}$
defined above. From there we pass to the universal cover $\widetilde{X}$ to
get the desired $G$-space. Any geodesic metric on $X$ lifts to a $G$-invariant
metric on the $\widetilde{X}$. This cover together with the the $G$-invariant
metric satisfy the following properties. (See \cite{fm} and the appendix to
this paper for details.)

\begin{itemize}
\item There is a distance non-increasing projection map $p_{T}:\widetilde{X}%
\rightarrow T$, where $T$ is the Bass-Serre tree for $\left(  \mathcal{G}%
,\Gamma\right)  $.

\item There is a homeomorphism $H:\widetilde{X}\rightarrow T\times
\widetilde{M}_{v}$, where $\widetilde{M}_{v}$ is the universal cover of an
arbitrary vertex space. Furthermore, $p_{T}^{-1}(t)$ maps to $t\times
\widetilde{M}_{v}$ under $H$ and, for all $m\in\widetilde{M}_{v}$, the map
$T\rightarrow T\times m\rightarrow\widetilde{X}$ is a locally isometric embedding.

\item There exists $C\geq1$ such that for all edges $e$ of $T$ and $v\in e$,
the retraction $r:e\rightarrow v$ induces a projection
\[
p_{T}^{-1}(e)\overset{H}{\rightarrow}e\times\widetilde{M}_{v}\rightarrow
v\times\widetilde{M}_{v}\overset{H^{-1}}{\rightarrow}p_{t}^{-1}(v)
\]
which is $C$-Lipschitz.
\end{itemize}

The reader can compare these statements with the well-known picture of the
Cayley complex of the Baumslag-Solitar group $BS(m,n)$ (homeomorphic to
$T\times\rr$, where $T$ is the Bass-Serre tree of the splitting and
$\rr=\widetilde{S^{1}}$).\medskip

In the case of Approach II, everything works the same as above, except that
the vertex and edge spaces are now the aspherical Hilbert cube manifolds
$M_{v}\times I^{\omega}$ and $M_{e}\times I^{\omega}$ and their universal
covers are $\widetilde{M}_{v}\times I^{\omega}$ and $\widetilde{M}_{e}\times
I^{\omega}$. The spaces $X$ and $\widetilde{X}$ are now Hilbert cube manifolds.

The above may be summarized as follows.

\begin{Theorem}
\label{Theorem: main realization theorem}Let $\left(  \mathcal{G}%
,\Gamma\right)  $ be a finite connected graph of groups\emph{ }$\left(
\mathcal{G},\Gamma\right)  $, with the property that each vertex group $G_{v}$
is the fundamental group of a closed aspherical manifold $M_{v}$ and for each
edge $e$, the monomorphisms $G_{e}\overset{\phi_{e}^{-}}{\longrightarrow
}G_{i(e)}$ and $G_{e}\overset{\phi_{e}^{+}}{\longrightarrow}G_{t(e)}$ are of
finite index. Then

\begin{enumerate}
\item[I.] if the Borel Conjecture holds for each edge group $G_{e}$, then
$\left(  \mathcal{G},\Gamma\right)  $ can be realized by a graph of covering
spaces with vertex spaces $M_{v}$,

\item[II.] if the Whitehead group $\operatorname*{Wh}\left(  G_{e}\right)  $
vanishes for each $G_{e}$, then $\left(  \mathcal{G},\Gamma\right)  $ can be
realized by a graph of covering spaces where the vertex spaces are the Hilbert
cube manifolds $M_{v}\times I^{\omega}$.
\end{enumerate}
\end{Theorem}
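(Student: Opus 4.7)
The plan is to execute directly the two constructions labeled ``Approach I'' and ``Approach II'' in the preceding discussion. Both approaches begin identically. For each edge $e$ of $\Gamma$, using the standard correspondence between subgroups of $\pi_1(M_v)$ and covering spaces of $M_v$, we realize $\phi_e^-$ and $\phi_e^+$ by finite-sheeted coverings $q_e^- : M_e^- \to M_{i(e)}$ and $q_e^+ : M_e^+ \to M_{t(e)}$. Both $M_e^-$ and $M_e^+$ are closed aspherical $n$-manifolds (covers of aspherical manifolds are aspherical, and all vertex manifolds have the same dimension $n$) with fundamental group canonically identified with $G_e$, so by obstruction theory they are homotopy equivalent via some $f_0$ inducing the identity on $G_e$.

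For Part I, the Borel Conjecture applied to $G_e$ asserts that $f_0$ is homotopic to a homeomorphism $f : M_e^- \to M_e^+$. Setting $M_e := M_e^-$, $p_e^- := q_e^-$, and $p_e^+ := q_e^+ \circ f$, each edge map is a finite-sheeted covering of its target vertex manifold, and the data $(M_v;\, M_e;\, p_e^\pm)$ form a graph of covering spaces realizing $(\mathcal{G},\Gamma)$.

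For Part II, we replace the Borel Conjecture by the hypothesis $\operatorname{Wh}(G_e)=0$. The Whitehead torsion of $f_0$ lies in $\operatorname{Wh}(G_e)=0$, so $f_0$ is a simple homotopy equivalence. Chapman's theorem, stated in Section \ref{Section: The Hilbert cube and Hilbert cube manifolds}, then yields a homeomorphism $\widehat{f} : M_e^- \times I^\omega \to M_e^+ \times I^\omega$ homotopic to $f_0 \times \id_{I^\omega}$. By West's theorem each $M_v \times I^\omega$ is a Hilbert cube manifold; we take these as the vertex spaces, the edge spaces to be $M_e \times I^\omega$ (with $M_e := M_e^-$), and the edge maps to be $p_e^- := q_e^- \times \id_{I^\omega}$ and $p_e^+ := (q_e^+ \times \id_{I^\omega}) \circ \widehat{f}$. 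Again each edge map is a covering of its target vertex space, so this realizes $(\mathcal{G},\Gamma)$ as a graph of covering spaces.

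The only real obstacle is that the two covers $M_e^-$ and $M_e^+$ associated to a given edge need not be homeomorphic on the nose, only homotopy equivalent; without additional input, this prevents a coherent gluing. The Borel Conjecture in Part I, and the combination of $\operatorname{Wh}(G_e)=0$ with Chapman's theorem in Part II, supply two independent ways to upgrade the canonical homotopy equivalence to the genuine homeomorphism needed to define the edge maps; once this upgrade is in hand, the remainder of the construction (checking that each edge map is a covering projection onto its target vertex space) is formal.
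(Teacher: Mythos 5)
Your proposal is correct and follows essentially the same route as the paper, which presents this theorem as a summary of its ``Approach I'' and ``Approach II'' constructions: realize the edge monomorphisms by finite-sheeted covers $q_e^{\pm}$, then use the Borel Conjecture (resp.\ $\operatorname{Wh}(G_e)=0$ together with Chapman's theorem after stabilizing by $I^{\omega}$) to upgrade the canonical homotopy equivalence $M_e^-\simeq M_e^+$ to a homeomorphism defining $p_e^+$. Your explicit remark that $f_0$ induces the identity on $G_e$ is a slightly more careful bookkeeping point than the paper records, but the argument is the same.
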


\begin{remark}
For the purposes of this paper, Approaches I and II lead to nearly identical
places. That is largely due to our reliance on a compressibility hypothesis,
which we can verify only for nonpositively curved Riemannian manifolds.
Farrell and Jones \cite{fj} have shown that, with one significant exception,
both the Borel Conjecture and the triviality of the Whitehead group hold for
(fundamental groups of) closed nonpositively curved Riemannian manifolds. The
exception occurs when $n=4$, where their surgery-theoretic proof of the Borel Conjecture does
not apply. As such, Approach II is essential for obtaining the $n=4$ case of
our main results. Approach II also holds promise for proving more general
theorems, but that is likely to require a different method---one that does not
involve compressibility. Further discussion of that idea is included in
Section \ref{Section: Some open questions}.
\end{remark}

\section{Graphs of nonpositively curved Riemannian $n$%
-manifolds\label{Section: Graphs of nonpositively curved n-manifolds}}

We are now ready to present our main results, in which we apply Theorems
\ref{Theorem: main realization theorem}, \ref{t:main} and \ref{t:ezstructure}
to provide new classes of groups that admit $(\mathcal{E})\cz$-structures.

\begin{Theorem}
\label{Theorem: E/Z-structures for graphs of manifolds}Suppose $G$ is the
fundamental group of a finite graph of groups, where each vertex group is the
fundamental group of a closed, nonpositively curved Riemannian manifold, and
each edge group is finite index in corresponding vertex groups. Then $G$
admits a $\mathcal{Z}$-structure. If the lifts to universal covers of all
covering maps $p_{e}^{-}:M_{e}\rightarrow$ $M_{i(e)}$ and
$p_{e}^{+}:M_{e}\rightarrow$ $M_{t(e)}$ (discussed above) extend over the
visual boundaries, then $G$ admits an $\mathcal{EZ}$-structure.
\end{Theorem}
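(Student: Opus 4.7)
\medskip

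\noindent\textbf{Proof proposal.} The plan is to build a model for $G$ as a graph of covering spaces and then apply Theorem \ref{t:main} with $X=T$ (the Bass-Serre tree) and $Y=\widetilde{M}_{v}$ (the universal cover of a vertex manifold). First I would invoke Theorem \ref{Theorem: main realization theorem}: by the Farrell-Jones results, closed nonpositively curved Riemannian manifolds satisfy the Borel Conjecture and have trivial Whitehead group in all dimensions except possibly $n=4$ for the Borel Conjecture, so Approach I works for $n\neq 4$ and Approach II works in all dimensions, at the cost of crossing with $I^{\omega}$. In either case, we obtain an aspherical total space $X$ whose universal cover is homeomorphic to $T\times\widetilde{M}_{v}$ (or $T\times\widetilde{M}_{v}\times I^{\omega}$), and we equip the right-hand side with the $\ell_{2}$ metric. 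Since $T$, $\widetilde{M}_{v}$, and $I^{\omega}$ are all CAT(0), so is the product, and by the observations in Section \ref{Section: The Hilbert cube and Hilbert cube manifolds}, the $I^{\omega}$ factor does not change the visual boundary.

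Next I would verify the two hypotheses of Theorem \ref{t:main}. For condition (1), the distance non-increasing projection $p_{T}:\widetilde{X}\rightarrow T$ is $G$-equivariant (the $G$-action on $T$ is the Bass-Serre action), so every translate $gK$ projects onto a translate of $p_{T}(K)$, giving a uniform bound $S=\diam p_{T}(K)$ on the diameter in the $T$-direction. For condition (2), I would use the third listed property of the homeomorphism $H$: the edge-retractions $p_{T}^{-1}(e)\rightarrow p_{T}^{-1}(v)$ are $C$-Lipschitz. Iterating this along the geodesic in $T$ from $p_{T}(gK)$ back to a chosen basepoint vertex shows that if $\pi_{X}(gK)\subset B_{T}(t_{0},R)$, then $\diam\pi_{Y}(gK)\leq C^{R+S}\diam K=:\psi(R)$, which is a proper function of $R$. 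Theorem \ref{manifold compressible} shows that $\widetilde{M}_{v}$ is (radially) compressible; and since a compact factor does not destroy compressibility (Remark \ref{Remark: radial compressions}(\ref{Item: compressible but not radially})), the same holds for $\widetilde{M}_{v}\times I^{\omega}$ in the Hilbert cube case. Theorem \ref{t:main} then delivers the desired $\mathcal{Z}$-structure $\left(\overline{T\times\widetilde{M}_{v}},\partial_{\infty}(T\times\widetilde{M}_{v})\right)$.

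For the $\mathcal{EZ}$ statement, I would upgrade by applying Theorem \ref{t:ezstructure} to each individual group element. Since the $G$-action on $T\times\widetilde{M}_{v}$ is induced by the graph-of-covering-spaces construction, each $g\in G$ splits as $(g_{1},g_{2})$ where $g_{1}$ is a simplicial (hence isometric) automorphism of $T$ and $g_{2}$ is a homeomorphism of $\widetilde{M}_{v}$ obtained by concatenating lifts of the covering maps $p_{e}^{\pm}$ along the unique edge path in $T$ traversed by $g$. By the quasi-isometry invariance (Proposition \ref{p:qi}) and the fact that each lift is a quasi-isometry of $\widetilde{M}_{v}$ (covering maps of closed Riemannian manifolds lift to quasi-isometries), each $g_{2}$ is a quasi-isometry; and by the hypothesis of the theorem, each lift extends to the visual boundary, hence so does any composition $g_{2}$. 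The compressing homeomorphism $h_{\psi}$ used above is radial, based on $\exp_{m_{0}}$, and after pre-composing with $\log(x+1)$ as in the remark following Theorem \ref{t:ezstructure}, it is linearly controlled. All the hypotheses of Theorem \ref{t:ezstructure} are then satisfied, so the conjugated action of each $g$ extends continuously to $\partial_{\infty}T\ast\partial_{\infty}\widetilde{M}_{v}$, yielding an $\mathcal{EZ}$-structure.

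The main obstacle I anticipate is handling the $n=4$ case cleanly: the Borel Conjecture is not available, so Approach I fails, and one must carry the $I^{\omega}$ factor throughout while confirming that (i) all the properties of the homeomorphism $\widetilde{X}\cong T\times\widetilde{M}_{v}$ listed in Section \ref{Section: Graphs of closed aspherical n-manifolds} still hold when the vertex and edge spaces are Hilbert cube manifolds, and (ii) the compressibility and boundary extension arguments are not disturbed by the extra factor. A secondary technical point is a clean verification that each $g\in G$ acts in a factor-preserving way on the product model, which comes down to checking that the identifications used in the graph-of-covering-spaces construction are compatible with the $T\times\widetilde{M}_{v}$ decomposition; this should be unpacked carefully from the appendix.
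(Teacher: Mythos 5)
Your proposal is correct and follows essentially the same route as the paper: realize the graph of groups via covering spaces (Approach I for $n\neq 4$, Approach II with the $I^{\omega}$ factor to cover $n=4$), verify conditions (1) and (2) of Theorem \ref{t:main} using the distance non-increasing projection $p_{T}$ and the $C$-Lipschitz edge retractions, invoke compressibility of $\widetilde{M}_{v}$ (and of $\widetilde{M}_{v}\times I^{\omega}$), and upgrade to an $\mathcal{EZ}$-structure by checking hypotheses i)--iv) of Theorem \ref{t:ezstructure} elementwise via Remark \ref{Remark: Action by quasi-isometric homeomorphisms}. The only cosmetic difference is your choice of $\psi(R)=C^{R+S}\diam K$ versus the paper's $DC^{R}$ with $D=\diam(p_{\widetilde{M}_{v}}K)$, which is immaterial.
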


\begin{proof}
For the sake of simplicity, begin my assuming that $n\neq4$. Then, by
\cite{fj}, the Borel Conjecture holds for each edge group, so we may use the
graph of covering spaces described in Approach I above.

Our initial task is to check the conditions found in Theorem \ref{t:main}.
Using the homeomorphism $\widetilde{X}\rightarrow T\times\widetilde{M}_{v}$
noted above, $G$ acts properly and cocompactly on the product $T\times
\widetilde{M}_{v}$. (For a more detailed discussion of this action, see the
appendix.) By Theorem \ref{manifold compressible} and Proposition \ref{p:qi},
$\widetilde{M}_{v}$ is compressible for any $\pi_{1}(M_{v})$-equivariant
metric. Choose any nonpositively curved Riemannian metric. We fix a basepoint $t_{0}\in T$; make
$\widetilde{M}_{v}$ isometric to $p_{T}^{-1}(t_{0})$; put the usual metric on
$T$; and give $T\times\widetilde{M}_{v}$ the product metric. Choose a compact
set $K$ in $\widetilde{X}$ so that $GK=\widetilde{X}$. Note that
$\diam(p_{T}(gK))\leq\diam(K)$, so condition (1) of Theorem \ref{t:main} is
satisfied. Let $p_{\widetilde{M}_{v}}$ be the projection $\widetilde{X}%
\rightarrow p_{T}^{-1}(t_{0})\cong t_{0}\times\widetilde{M}_{v}$ and let
$D=\diam(p_{\widetilde{M}_{v}}K)$. Now, suppose $p_{T}(gK)\subset B_{T}%
(t_{0},R)$. The projection
\[
p_{T}^{-1}(B_{T}(t_{0},R))\rightarrow B_{T}(t_{0},R)\times\widetilde{M}%
_{v}\rightarrow t_{0}\times\widetilde{M}_{v}\rightarrow p_{T}^{-1}(t_{0})
\]
is $C^{R}$-Lipschitz, so $p_{\widetilde{M}_{v}}(gK)$ has diameter $<DC^{R}$.
Thus, condition (2) of Theorem \ref{t:main} is satisfied. It follows that $G$
admits a $\mathcal{Z}$-structure.

Now assume that all lifts $\widetilde{M}_{i\left(  e\right)  }%
\overset{\widetilde{p_{e}^{-}}}{\longleftarrow}\widetilde{M}_{e}%
\overset{\widetilde{p_{e}^{+}}}{\longrightarrow}\widetilde{M}_{t\left(
e\right)  }$ of our finite-sheeted coverings extend over their corresponding
visual boundaries. To obtain an $\mathcal{EZ}$-structure, it suffices to
verify the conditions in Theorem \ref{t:ezstructure} for all elements of $G$,
viewed as self-homeomorphisms of $T\times\widetilde{M}_{v}$. By the proof of
Theorem \ref{manifold compressible}, we know that $\widetilde{M}_{v}$ is
radially contractible, so it suffices to check i)-iii). Items i) and ii) are
discussed in detail in Section \ref{Subsection: Graphs of covering spaces} of
the appendix, with the action on the first factor being the standard
Bass-Serre action. As is discussed in Remark
\ref{Remark: Action by quasi-isometric homeomorphisms}, each element of $G$
acting on $\widetilde{M}_{v}$ is a finite composition of lift homeomorphisms,
inverses of those homeomorphisms, and isometries of vertex spaces. Each of
those is a quasi-isometric homeomorphism, and by hypothesis, each extends over
the corresponding boundaries. Therefore, condition iii) holds as well.

Next, in order to cover the $n=4$ case (and to offer an alternative proof in
all other dimensions), let us switch to the setup described in Approach II.
Again, \cite{fj} confirms the necessary hypothesis. In order to apply Theorem
\ref{t:main}, we need a $\pi_{1}(M_{v})$-equivariant CAT(0) metric on
$\widetilde{M}_{v}\times I^{\omega}$. This can be accomplished by using the
$\ell_{2}$-metric described in Section
\ref{Section: The Hilbert cube and Hilbert cube manifolds}. We also need to
know that $\widetilde{M}_{v}\times I^{\omega}$ is compressible---a fact that
was noted in Item \ref{Item: compressible but not radially} of Remark
\ref{Remark: radial compressions}. Everything else in the above proof now goes
through without changes.
\end{proof}

Note that if each manifold is negatively curved, the lift of any finite
covering map is a quasi-isometry between Gromov hyperbolic spaces, and hence
extends to the visual boundaries.

\begin{corollary}
\label{Corollary: main corollary 1}Graphs of nonpositively curved closed
Riemannian $n$-manifolds admit $\mathcal{Z}$-structures. Graphs of negatively
curved Riemannian $n$-manifolds admit $\mathcal{EZ}$-structures.
\end{corollary}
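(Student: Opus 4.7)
The plan is to derive the corollary directly from Theorem \ref{Theorem: E/Z-structures for graphs of manifolds}, with the two halves of the statement corresponding to the two halves of that theorem. For the first assertion, no extra work is needed: the hypotheses of Theorem \ref{Theorem: E/Z-structures for graphs of manifolds} are literally satisfied (each vertex group is the fundamental group of a closed nonpositively curved Riemannian $n$-manifold, and each edge group sits with finite index in its vertex groups), so we may immediately conclude the existence of a $\mathcal{Z}$-structure.

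For the second assertion, the only additional hypothesis to verify is that each lift $\widetilde{p_e^-}:\widetilde{M}_e\to\widetilde{M}_{i(e)}$ and $\widetilde{p_e^+}:\widetilde{M}_e\to\widetilde{M}_{t(e)}$ of a finite-sheeted covering map extends continuously over the visual boundary. The heart of the argument is the observation already flagged in the paragraph preceding the corollary: when each vertex manifold is negatively curved, every universal cover $\widetilde{M}_v$ is a complete, simply connected Riemannian manifold of strictly negative (bounded) sectional curvature, hence a proper CAT($-\kappa$) space for some $\kappa>0$. Such spaces are Gromov hyperbolic, and their visual boundaries coincide canonically with their Gromov boundaries.

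The key step is then to check that each lift of a finite covering map is a quasi-isometry. Here $\widetilde{p_e^\pm}$ is actually a homeomorphism of universal covers (a finite-sheeted covering space has the same universal cover as its base), but the source metric (pulled back from one vertex manifold) and the target metric (coming from the other vertex manifold) are a priori different. However, both metrics on the closed edge manifold $M_e$ are $\pi_1(M_e)$-equivariant Riemannian metrics on a compact space, hence bilipschitz equivalent; this equivalence lifts to the universal cover, so $\widetilde{p_e^\pm}$ is a quasi-isometric homeomorphism between Gromov hyperbolic spaces. By the standard fact that quasi-isometries between proper Gromov hyperbolic spaces extend to homeomorphisms of their Gromov boundaries, each $\widetilde{p_e^\pm}$ extends continuously over the visual boundary.

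With the boundary-extension hypothesis verified for every edge, the second clause of Theorem \ref{Theorem: E/Z-structures for graphs of manifolds} applies and gives the desired $\mathcal{EZ}$-structure. The main (mild) obstacle is the bookkeeping for the quasi-isometry claim above: one must be careful that the two Riemannian metrics on $M_e$ really are bilipschitz equivalent in a way compatible with both covering maps, but compactness of $M_e$ makes this routine. Everything else is either direct invocation of the previous theorem or a textbook appeal to Gromov hyperbolic boundary extension.
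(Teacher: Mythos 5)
Your proposal is correct and follows the paper's own route: the first assertion is a direct instance of Theorem \ref{Theorem: E/Z-structures for graphs of manifolds}, and the second is obtained by observing that, by compactness, each lift $\widetilde{p_e^{\pm}}$ is a quasi-isometric homeomorphism between Gromov hyperbolic (CAT($-\kappa$)) universal covers and therefore extends over the visual boundaries, which is exactly the remark the paper makes immediately before stating the corollary. The only cosmetic caveat is that the metric pulled back under the Borel homeomorphism $f$ need not be Riemannian, but the quasi-isometry conclusion you draw from cocompactness and equivariance is the standard (and intended) argument.
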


For generic nonpositively curved Riemannian manifolds, we cannot be sure that
the lifts of all the covering maps constructed in Section
\ref{Section: Graphs of closed aspherical n-manifolds} extend over visual
boundaries. The problem is the possibly non-geometric nature of the lifts of
the homeomorphisms $f:M_{e}^{-}\rightarrow M_{e}^{+}$ (or $f:M_{e}^{-}\times
I^{\omega}\rightarrow M_{e}^{+}\times I^{\omega}$) used in defining $p_{e}%
^{+}:M_{e}\rightarrow M_{t(e)}$ (or $p_{e}^{+}:M_{e}\times I^{\omega
}\rightarrow M_{t(e)}\times I^{\omega}$). By applying the Bieberbach Theorems
 \cite{Szc12}, we can avoid this problem in the extreme (but
important) special case where all vertex manifolds are flat.

\begin{corollary}
\label{Corollary: main corollary 2}Graphs of closed flat $n$-manifolds admit
$\ce\mathcal{Z}-$structures.
\end{corollary}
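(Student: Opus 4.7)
The plan is to verify the hypothesis of Theorem \ref{Theorem: E/Z-structures for graphs of manifolds}, that \emph{every} lift of an edge covering map extends over the visual boundary of $\widetilde{M}_v \cong \mathbb{R}^n$. The key observation is that, by the Bieberbach Theorems, the Borel Conjecture holds in every dimension for closed flat manifolds: two closed flat $n$-manifolds are homotopy equivalent if and only if their fundamental groups are isomorphic, if and only if they are affinely diffeomorphic. Consequently Approach I from Section \ref{Section: Graphs of closed aspherical n-manifolds} is available in all dimensions (including $n=4$), and we may realize $(\mathcal{G},\Gamma)$ as a graph of covering spaces whose vertex and edge manifolds are all closed flat $n$-manifolds---the edge manifolds being flat because finite-index subgroups of crystallographic groups are again crystallographic.

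Equip $M_e^-$ and $M_e^+$ with the flat metrics pulled back from $M_{i(e)}$ and $M_{t(e)}$, so that the covering projections $q_e^-$ and $q_e^+$ become local isometries. Their lifts to the universal cover are then Euclidean isometries of $\mathbb{R}^n$, which visibly extend to homeomorphisms of $\overline{\mathbb{R}^n}=\mathbb{R}^n\cup S^{n-1}$. The only non-canonical ingredient in Approach I is the homeomorphism $f:M_e^-\to M_e^+$ used to define $p_e^+ = q_e^+\circ f$. Here the second Bieberbach Theorem is decisive: the isomorphism $\pi_1(M_e^-)\cong G_e\cong\pi_1(M_e^+)$ is induced by an affine diffeomorphism, and we replace $f$ (within its homotopy class, which is all the graph of spaces construction sees) by such an affine map. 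Its lift is an affine homeomorphism $\tilde f(x)=Mx+b$ of $\mathbb{R}^n$, which carries geodesic rays to geodesic rays; it therefore extends continuously to $\overline{\mathbb{R}^n}$, acting on $S^{n-1}$ by $\xi\mapsto M\xi/\|M\xi\|$.

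Consequently every lift $\widetilde{p_e^{\pm}}$ is a composition of a Euclidean isometry with an affine homeomorphism of $\mathbb{R}^n$, so it extends over the visual boundary. Theorem \ref{Theorem: E/Z-structures for graphs of manifolds} then delivers the desired $\mathcal{EZ}$-structure on $G$. The only substantive input beyond that theorem is the appeal to Bieberbach rigidity to straighten the auxiliary homeomorphism $f$ into an affine map; this is immediate from the classical theory, and I expect no genuine obstacle elsewhere in the argument.
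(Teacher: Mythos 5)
Your proof is correct and follows essentially the same route as the paper: pull back flat metrics so the lifted covering projections become Euclidean isometries, invoke Bieberbach rigidity to choose the gluing homeomorphism $f:M_e^-\to M_e^+$ affine (hence with affine lift extending over the visual sphere), and then apply Theorem \ref{Theorem: E/Z-structures for graphs of manifolds}. The only cosmetic discrepancy is that the paper attributes the rigidity statement to the \emph{third} Bieberbach Theorem (following \cite{Szc12}) rather than the second; the mathematical content is identical.
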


\begin{proof}
The third Bieberbach Theorem, as described in \cite[Ch.2]{Szc12}, assures
that, for all $n$ and any pair of closed flat $n$-manifolds with isomorphic
fundamental groups, there is a corresponding affine homeomorphism; in other
words, a homeomorphism that lifts to an affine homeomorphism $\rr^{n}%
\rightarrow\rr^{n}$. Among other things, this will allow us to use Approach I
from Section \ref{Section: Graphs of closed aspherical n-manifolds}, even when
$n=4$.

Choose a flat metric on each vertex manifold $M_{v}$, then use the covering
maps $q_{e}^{-}:M_{e}^{-}\rightarrow M_{i(e)}$ and $q_{e}^{+}:M_{e}%
^{+}\rightarrow M_{t(e)}$ to lift those metrics to the finite-sheeted covers
$M_{e}^{-}$ and $M_{e}^{+}$. As such, the lifts to universal covers
$\widetilde{q}_{e}^{-}$ and $\widetilde{q}_{e}^{+}$ become isometries of $%
\mathbb{R}
^{n}$. The Bieberbach Theorem then allows us to choose a homeomorphism
$f:M_{e}^{-}\rightarrow M_{e}^{+}$ that lifts to an affine isomorphism of $%
\mathbb{R}
^{n}$. Since isometries and affine isomorphisms of $%
\mathbb{R}
^{n}$ all extend over visual boundaries, our Corollary follows.
\end{proof}

Using these results, we also obtain a strengthening of the result from
\cite{gmt}:

\begin{corollary}
\label{Corollary: main corollary 3}Generalized Baumslag-Solitar groups admit
$\mathcal{EZ}$-structures.
\end{corollary}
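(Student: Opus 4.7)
The plan is to observe that Corollary \ref{Corollary: main corollary 3} is essentially an immediate specialization of Corollary \ref{Corollary: main corollary 2} to dimension $n=1$. A generalized Baumslag-Solitar group is, by definition, the fundamental group of a finite graph of groups in which every vertex and every edge group is infinite cyclic. Since $\mathbb{Z} = \pi_1(S^1)$ and $S^1$ is a closed flat (Riemannian) $1$-manifold, each vertex group is realized by a closed flat $1$-manifold.

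Next I would verify the finite-index hypothesis on edge inclusions: any injective homomorphism $\mathbb{Z} \hookrightarrow \mathbb{Z}$ is multiplication by a nonzero integer and thus has finite-index image. So both monomorphisms $G_e \to G_{i(e)}$ and $G_e \to G_{t(e)}$ are automatically of finite index, which is precisely the hypothesis needed to apply the graph-of-covering-spaces machinery of Section \ref{Section: Graphs of closed aspherical n-manifolds} (these covers being finite-sheeted covers $S^1 \to S^1$).

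With the setup in place, Corollary \ref{Corollary: main corollary 2} directly yields an $\mathcal{EZ}$-structure, since a generalized Baumslag-Solitar group is now exhibited as the fundamental group of a graph of closed flat $1$-manifolds. There is essentially no obstacle here; the only thing to check is that the dimension-$1$ case is not excluded anywhere in the preceding arguments (for instance in the Bieberbach step, where in fact the $n=1$ case is entirely trivial because there is, up to affine equivalence, a single closed flat $1$-manifold and a single universal cover $\mathbb{R}$). The strengthening over \cite{gmt} is that the previous result handled only the classical Baumslag-Solitar groups $BS(m,n)$, whereas the present argument treats arbitrary finite graphs of infinite cyclic groups in a unified way.
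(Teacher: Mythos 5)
Your proposal is correct and matches the paper's (implicit) argument exactly: a generalized Baumslag--Solitar group is the fundamental group of a finite graph of infinite cyclic groups, each realized by the closed flat $1$-manifold $S^{1}$ with automatically finite-index edge monomorphisms, so Corollary \ref{Corollary: main corollary 2} applies with $n=1$. One minor historical correction: \cite{gmt} already gave $\mathcal{Z}$-structures (and, for $BS(m,n)$, $\mathcal{EZ}$-structures) for generalized Baumslag--Solitar groups, so the strengthening here is the upgrade to $\mathcal{EZ}$-structures in the generalized case, not the passage from $BS(m,n)$ to arbitrary graphs.
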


\begin{remark}
A few comments are in order as we close this section.

\begin{enumerate}
\item The groups addressed in Corollaries \ref{Corollary: main corollary 1}
and \ref{Corollary: main corollary 2} have been previously studied by a number
of people, see \cite{fm} and \cite{msw}. Among other things, these fundamental
groups are quasi-isometrically rigid in the sense that any group
quasi-isometric to such a group is itself the fundamental group of a finite
graph of groups with vertex/edge groups quasi-isometric to the original
vertex/edge groups.

\item The proof of Theorem
\ref{Theorem: E/Z-structures for graphs of manifolds} is valid for graphs of
(non-Riemannian) nonpositively curved manifolds, provided they are
compressible (to get a $\mathcal{Z}$-structure) or radially compressible (to
get an $\mathcal{EZ}$-structure). At this time, we do not know any examples of
that type.

\item A primary motivation for studying $\mathcal{EZ}$-structures is that a
torsion-free group with a $\mathcal{EZ}$-structure satisfies the Novikov
Conjecture. See \cite{fl} and also \cite{CaPe95}. All of the examples covered
by Corollaries \ref{Corollary: main corollary 1}%
-\ref{Corollary: main corollary 3} were previously known to satisfy the
Novikov Conjecture for other reasons. For example, hyperbolic and free abelian
groups have finite asymptotic dimension, so by work of Bell and Dranishnikov
\cite{bd}, so do graphs of groups with these as vertex and edge groups . It is
an open question whether fundamental groups of all nonpositively curved
manifolds (Riemannian or otherwise) have finite asymptotic dimension. As such,
it is possible that Theorem
\ref{Theorem: E/Z-structures for graphs of manifolds} contains new examples of
groups which satisfy the Novikov Conjecture. 
\end{enumerate}
\end{remark}

\section{Noncompressible spaces\label{Section: Noncompressible spaces}}

In this section, we highlight the delicate nature of compressibility by
looking at some noncompressible CAT(0) spaces which occur as universal
coverings of compact aspherical CW-complexes and manifolds. We begin with the
simplest such example.

\begin{example}
Let $T_{4}$ be the tree with valence 4 at each vertex and standard path length
metric, i.e., the universal cover of a wedge of two circles. If we view
$T_{4}$ as a 1-manifold with singularites at the vertices, it is clear that
large balls have more singular points than small balls. This is an obstruction
to the existence of compressing homeomorphisms.
\end{example}

\begin{example}
Now consider $T_{4}\times I^{\omega}$ with the $\ell_{2}$-metric. By
\cite{Wes71}, this is a Hilbert cube manifold, therefore a homogeneous space.
This nullifies the above argument, but compressibility still fails since large
balls in $\mathbb{T}_{4}\times I^{\omega}$ have more complementary components
than small balls.
\end{example}

Next we examine an example of more direct relevance to this paper. In
particular, we identify a family of closed, nonpositively curved (locally
CAT(0)) finite-dimensional manifolds whose universal covers are not compressible.

By a \emph{standard Davis example} we are referring to the special case of the
construction in \cite{Dav83}. This begins with a compact contractible
$q$-manifold $Q^{q}$ with a mirror structure $\left\{  Q_{v}\right\}  _{v\in
V}$ consisting of tame $\left(  q-1\right)  $-cells in $\partial Q^{q}$, and a
Coxeter system $\left(  \Gamma,V\right)  $, consisting of a Coxeter group
$\Gamma$ and a preferred generating set $V$ in one-to-one correspondence with
the mirrors. We assume that $\partial Q_{q}=\cup Q_{v}$ and the mirror
structure is \textquotedblleft$\Gamma$-finite\textquotedblright. For any
compact contractible $q$-manifold $Q^{q}$, such an arrangement exists: begin
with a flag triangulation $K$ of $\partial Q^{q}$ and let the mirrors be the
top-dimensional cells of the corresponding dual cell-structure on $\partial
Q^{q}$; they are indexed by the vertex set $V=K^{0}$. A corresponding
(right-angled) Coxeter system $\left(  \Gamma,V\right)  $ is obtained by
declaring $v_{i}^{2}=1$ for all $v_{i}\in V$ and $\left(  v_{i}v_{j}\right)
^{2}=1$ when $v_{i}$ and $v_{j}$ bound an edge in $K$.

Roughly speaking, $\Gamma$ provides instructions for gluing together members
of the discrete collection $\Gamma\times Q^{q}$ of copies of $Q^{q}$, to
obtain a contractible open manifold $X^{q}$ that admits a proper cocompact
$\Gamma$-action. Within $X^{q}$, the individual copies of $Q^{q}$ are referred
to as \emph{chambers}, with the chamber corresponding to $\left\{  g\right\}
\times Q^{q}$ denoted by $gQ^{q}$, and the identity chamber $\left\{
e\right\}  \times Q^{q}$ denoted as $Q^{q}$. By passing to a torsion-free
finite index subgroup $\Gamma^{\prime}\leq\Gamma$, one obtains a covering
projection $X^{q}\rightarrow\Gamma^{\prime}\backslash X^{q}$ with quotient a
closed aspherical manifold.

From the perspective of this paper, the key facts about $X^{q}$ are contained
in Lemma 8.2 and Remark 10.6 of \cite{Dav83}. It is observed that, if the
elements of $\Gamma$ are ordered $1=g_{1},g_{2},g_{3},\cdots$ so that
$\operatorname*{length}\left(  g_{j+1}\right)  \geq\operatorname*{length}%
\left(  g_{j}\right)  $ and $T_{i}=\cup_{j=1}^{i}g_{j}Q^{q}$, then for all
$i$, $T_{i}$ is a connected $q$-manifold with boundary, and $T_{i}\cap
g_{i+1}Q^{q}$ is a tame $\left(  q-1\right)  $-cell in the boundary of each
(made up of a finite union of panels). Since $Q^{q}$ is simply connected, it
is orientable; so assume now that $Q^{q}$ is an oriented manifold, and give
each chamber $g_{i}Q^{q}$ that same orientation when $\operatorname*{length}%
\left(  g_{i}\right)  $ is even and the opposite orientation when
$\operatorname*{length}\left(  g_{i}\right)  $ is odd. A quick look at the
gluing instructions in \cite{Dav83} for assembling the chambers into $X^{q}$
confirms that the orientations on the chambers fit together to provide
appropriate orientations on the $T_{i}$. All of this implies that $T_{i}$ is a
boundary connected sum of $i$ copies of $\pm Q$, hence $\partial T_{i}$ is a
connected sum of $i$ copies of $\pm\partial Q$. This is most interesting when
$q\geq4$ and $\pi_{1}\left(  \partial Q^{q}\right)  =G\neq1$, in which case,
$\pi_{1}\left(  \partial T_{i}\right)  $ is the free product $\ast_{k=1}^{i}%
G$. A key observation of Davis is that, for the corresponding neighborhood of
infinity $N_{i}=X^{q}-\operatorname*{int}T_{i}$, we have $\partial
T_{i}\hookrightarrow N_{i}$ is a homotopy equivalence, therefore $\pi
_{1}\left(  N_{i}\right)  = \ast_{k=1}^{i}G$. Davis used this fact to show
that $X^{q}$ is not simply connected at infinity (hence, not homeomorphic to
$\mathbb{R}^{q}$). We will use it for a similar, but different, reason.

Place a geodesic metric $d^{\prime}$ on $Q^{q}$; let $R=\operatorname*{diam}%
Q^{q}$; and give $X^{q}$ the corresponding path length metric $d$. As such,
$\left(  X^{q},d\right)  $ is a proper geodesic metric space, and the action
of $\Gamma$ on $X^{q}$ is geometric. Give $\Gamma$ the word length metric
$\rho$ corresponding to the generating set $V$. Choose $x_{0}\in
\operatorname*{int}Q^{q}$ and let $f:\left(  \Gamma,\rho\right)
\rightarrow\left(  X^{q},d\right)  $ be defined by $f\left(  g\right)
=gx_{0}$. Then $f$ has $R$-dense image, and by \v{S}varc-Milnor, is a $\left(
K,\varepsilon\right)  $-quasi-isometry for some $K\geq1$ and $\varepsilon
\geq0$. Note that, by our choice of $x_{0}$, $f$ is injective.

Let $\beta:\mathbb{N}\rightarrow\mathbb{N}$ be the growth function for
$\left(  \Gamma,\rho\right)  $, i.e., $\beta\left(  n\right)  =\left\vert
B_{\rho}[e,n]\right\vert $, and for $r>0$ and $A\subseteq X^{q}$, let
$N_{d}[A;r]$ denote the closed $r$-neighborhood of $A$ in $X^{q}$. Let $S$ be
the smallest integer such that $B_{d}\left[  x_{0};R\right]  \subseteq
T_{\beta\left(  S\right)  }$.

\begin{lemma}
\label{Lemma: Noncompressibility}For each $n\in\mathbb{N}$,

\begin{enumerate}
\item \label{Lemma: Noncompressibility item 1}$B_{d}\left[  x_{0},\frac{n}%
{K}-\varepsilon-R\right]  \subseteq N_{d}[f\left(  B_{\rho}[e,n]\right)  ,R]$

\item \label{Lemma: Noncompressibility item 2}$N_{d}[f\left(  B_{\rho
}[e,n]\right)  ,R]\subseteq T_{\beta\left(  n+S\right)  }$, and

\item \label{Lemma: Noncompressibility item 3}$T_{\beta\left(  n\right)
}\subseteq B_{d}\left[  x_{0},Kn+\varepsilon+R\right]  $.
\end{enumerate}
\end{lemma}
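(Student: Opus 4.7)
Each of the three containments is proved by a direct unpacking of definitions, using the \v{S}varc--Milnor quasi-isometry $f:(\Gamma,\rho)\to(X^q,d)$ together with the explicit chamber structure of $X^q$. The one structural observation I would record at the outset is that, because the enumeration $g_1,g_2,\ldots$ is by non-decreasing length, the first $\beta(k)$ chambers are exactly those indexed by elements of $B_\rho[e,k]$; equivalently, $T_{\beta(k)}=\bigcup_{g\in B_\rho[e,k]} gQ^q$.

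For (\ref{Lemma: Noncompressibility item 1}), I would start with $y\in B_d[x_0,\tfrac{n}{K}-\varepsilon-R]$. Since $f(\Gamma)$ is $R$-dense in $X^q$, there exists $g\in\Gamma$ with $d(y,gx_0)\leq R$. The triangle inequality gives $d(x_0,gx_0)\leq \tfrac{n}{K}-\varepsilon$, and then the quasi-isometry lower bound $\tfrac{1}{K}\rho(e,g)-\varepsilon\leq d(f(e),f(g))$ forces $\rho(e,g)\leq n$. Hence $y\in N_d[f(B_\rho[e,n]),R]$.

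For (\ref{Lemma: Noncompressibility item 2}), suppose $y$ satisfies $d(y,gx_0)\leq R$ for some $g\in B_\rho[e,n]$. Then $y\in B_d[gx_0,R]=gB_d[x_0,R]$, and by the definition of $S$ together with the fact that $\Gamma$ acts by isometries, $gB_d[x_0,R]\subseteq gT_{\beta(S)}$. The chambers appearing in $gT_{\beta(S)}$ are precisely $gg_jQ^q$ for $\rho(e,g_j)\leq S$; each such $gg_j$ has length at most $n+S$, so the chamber lies in $T_{\beta(n+S)}$ by the structural observation above. For (\ref{Lemma: Noncompressibility item 3}), any $y\in T_{\beta(n)}$ lies in some chamber $g_jQ^q$ with $\rho(e,g_j)\leq n$. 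Since $\operatorname{diam}(g_jQ^q)=R$, one has $d(y,g_jx_0)\leq R$, and the quasi-isometry upper bound gives $d(x_0,g_jx_0)\leq K\rho(e,g_j)+\varepsilon\leq Kn+\varepsilon$; the triangle inequality closes the argument.

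I do not expect any serious obstacle here; the only point that requires a moment's thought is the translation step in (\ref{Lemma: Noncompressibility item 2}), where one must verify that $g$-translation of chambers respects the enumeration in the sense that $g\cdot T_{\beta(S)}$, although not itself an initial segment $T_m$, is nevertheless contained in some $T_m$, and the correct value $m=\beta(n+S)$ comes from the submultiplicative length bound $\rho(e,gg_j)\leq \rho(e,g)+\rho(e,g_j)$. The constant $S$ is precisely what absorbs the $R$-thickness of $Q^q$ when passing from group elements to $R$-neighborhoods of their images under $f$.
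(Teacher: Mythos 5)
Your proposal is correct and follows essentially the same argument as the paper: item (1) via the $R$-density of $f(\Gamma)$ and the quasi-isometry lower bound (the paper phrases this as a contrapositive, you argue directly, but it is the same computation), item (2) via $B_d[gx_0,R]\subseteq gT_{\beta(S)}$ and the length bound on products $gh$, and item (3) via the quasi-isometry upper bound plus the triangle inequality. The structural identification $T_{\beta(k)}=\bigcup_{g\in B_\rho[e,k]}gQ^q$ that you record explicitly is exactly what the paper uses implicitly, so there is nothing to add.
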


\begin{proof}
To prove \ref{Lemma: Noncompressibility item 1}), suppose $y\notin
N_{d}[f\left(  B_{\rho}[e,n]\right)  ,R]$. Then there exists $g\in\Gamma$,
such that $\rho\left(  e,g\right)  >n$ and $d\left(  y,f\left(  g\right)
\right)  \leq R$. Therefore
\[
\frac{1}{K}\rho\left(  e,g\right)  -\varepsilon\leq d\left(  x_{0},f\left(
g\right)  \right)  \leq d\left(  x_{0},y\right)  +R
\]
So
\[
\frac{n}{K}-\varepsilon-R<d\left(  x_{0},y\right)
\]

For item \ref{Lemma: Noncompressibility item 2}), let $gx_{0}\in f\left(
B_{\rho}[e,n]\right)  $. Then $\operatorname*{length}\left(  g\right)  \leq n$
and
\[
B_{d}\left[  gx_{0}:R\right]  \subseteq gT_{\beta\left(  S\right)  }=
{\displaystyle\bigcup\limits_{h\in B_{\rho}[e;S]}}ghQ^{q}%
\]
By definition, $\operatorname*{length}\left(  g\right)  \leq n$ and
$\operatorname*{length}\left(  h\right)  \leq S$, so $gh$ in the above
equality has length $\leq n+S$. Therefore the right-hand set is contained in
$T_{\beta\left(  n+S\right)  }$.

For the final item, suppose $gQ^{q}$ is a summand in $T_{\beta\left(
n\right)  }$. Then $\operatorname*{length}\left(  g\right)  \leq n$, so
$gx_{0}\in f\left(  \left[  B_{\rho}\left[  e,n\right]  \right]  \right)
\subseteq B_{d}\left[  x_{0},Kn+\varepsilon\right]  $. By the triangle
inequality, $gQ^{q}\subseteq B_{d}\left[  x_{0},Kn+\varepsilon+R\right]  $.
\end{proof}

\begin{Theorem}
\label{Theorem: Noncompressibility of Davis manifolds}Let $X^{q}$ be a Davis
manifold with chamber a compact contractible $q$-manifold $Q^{q}$ with
non-simply connected boundary. Let $\Gamma$ be the corresponding Coxeter group
and $d$ a metric on $X^{q}$ such that $\Gamma$ acts geometrically on $X^{q}$.
Then $X^{q}$ is noncompressible under the metric $d$.
\end{Theorem}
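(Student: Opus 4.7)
My plan is to argue by contradiction. Assume $h_\psi : X^q \to X^q$ is a compressing homeomorphism for a proper $\psi$ with $\psi(n) \geq \operatorname{diam}(T_{\beta(n)})$, which by item (3) of Lemma \ref{Lemma: Noncompressibility} can be taken linear in $n$. Compressibility then supplies a sublinear $\phi$ with $\operatorname{diam}(h_\psi(T_{\beta(n)})) < \phi(n)$ for all $n$. Since $x_0 \in T_{\beta(n)}$ for every $n$, the fixed point $z := h_\psi(x_0)$ belongs to each $h_\psi(T_{\beta(n)})$, so $h_\psi(T_{\beta(n)}) \subseteq B_d[z, \phi(n)]$. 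Using cocompactness of the $\Gamma$-action to fix some $g \in \Gamma$ with $d(gx_0, z) \leq D$ independently of $n$, items (1) and (2) of the Lemma then yield $h_\psi(T_{\beta(n)}) \subseteq g T_{\beta(m(n))}$ where $m(n) := K(\phi(n) + D + \varepsilon + R) + S$ is sublinear in $n$.

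The heart of the argument is a comparison of fundamental groups of complements. Since $h_\psi$ is a homeomorphism, $\pi_1(X^q - h_\psi(T_{\beta(n)})) \cong \pi_1(X^q - T_{\beta(n)}) = \ast_{k=1}^{\beta(n)} G$ by Davis's description of $\pi_1(N_i)$ recalled just before Lemma \ref{Lemma: Noncompressibility}; Grushko's theorem then forces this group to have rank at least $\beta(n)$, using $G \neq 1$. On the other hand, I would decompose $X^q - h_\psi(T_{\beta(n)}) = D_n \cup (X^q - \operatorname{int} g T_{\beta(m(n))})$ along $g \partial T_{\beta(m(n))}$, where $D_n := g T_{\beta(m(n))} - \operatorname{int} h_\psi(T_{\beta(n)})$; since $g \partial T_{\beta(m(n))} \hookrightarrow X^q - \operatorname{int} g T_{\beta(m(n))}$ is a $\pi_1$-isomorphism (the neighborhood of infinity deformation retracts onto its boundary), the Van Kampen amalgamated product collapses to give $\pi_1(D_n) \cong \ast_{k=1}^{\beta(n)} G$.

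To close the contradiction I would bound $\operatorname{rank}(\pi_1(D_n))$ purely in terms of $m(n)$, by viewing $D_n$ as a compact codimension-zero submanifold of the simply connected $q$-manifold $g T_{\beta(m(n))}$, whose outer boundary has $\pi_1$ of rank $\beta(m(n)) \cdot d(G)$. A Kurosh-style argument controlling the additional linking generators of $\pi_1(D_n)$ by the $\beta(m(n))$-chamber cell structure of $g T_{\beta(m(n))}$ should yield $\operatorname{rank}(\pi_1(D_n)) \leq C \beta(m(n))$; since $m(n)$ is sublinear, the Grushko lower bound $\beta(n)$ for this same rank produces a contradiction for $n$ large. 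The principal obstacle is making this rank bound rigorous, because $D_n$ need not deformation retract onto its outer boundary; an alternative route is to invoke Davis's pro-$\pi_1^\infty$ analysis directly, where the standard cofinal system has surjective bonding maps $\ast^{i+1} G \twoheadrightarrow \ast^i G$ dropping exactly one free factor, and the compressed cofinal system $\{X^q - h_\psi(T_{\beta(n)})\}$ nested inside the translated system $\{X^q - g T_{\beta(m(n))}\}$ forces, via a rank count on bondings, the inequality $\beta(n) \leq \beta(m(n))$, which fails for $n$ large.
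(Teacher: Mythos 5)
Your setup is correct and only cosmetically different from the paper's: you choose a linear $\psi$ dominating $\diam T_{\beta(n)}$ and recenter $h_{\psi}(T_{\beta(n)})$ near an orbit point $gx_{0}$ to get $h_{\psi}(T_{\beta(n)})\subseteq gT_{\beta(m(n))}$ with $m(n)$ sublinear, whereas the paper takes $\psi=\operatorname{id}$, post-composes $h_{\psi}$ with an element of $\Gamma$ so that $h_{\psi}(x_{0})$ stays near $x_{0}$, and compresses $T_{\beta(n+S+1)}$ into $\operatorname{int}T_{\beta(n+S)}$. Your Van Kampen computation $\pi_{1}(D_{n})\cong\ast_{k=1}^{\beta(n)}G$ is also correct. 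The genuine gap is the step you flag yourself: there is no bound $\operatorname{rank}\pi_{1}(D_{n})\leq C\beta(m(n))$ to be extracted from the chamber structure of $gT_{\beta(m(n))}$. The rank of $\pi_{1}$ of a codimension-zero submanifold is not controlled by the number of cells or chambers of the ambient piece (a regular neighborhood of a wedge of $N$ circles inside a single chamber already has free $\pi_{1}$ of rank $N$), and your own Van Kampen step shows that $\pi_{1}(D_{n})$ genuinely is $\ast^{\beta(n)}G$ under the standing assumption --- so any general upper bound of the proposed form would be false, not merely hard to prove. The contradiction cannot come from an a priori rank bound on $D_{n}$; it must come from exhibiting a surjection onto a large free product from a small one.

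That is exactly what your fallback route does, and it is essentially the paper's proof. The paper makes your ``rank count on bondings'' precise with Lemma \ref{Lemma: 1-sided h-cobordism}: for nested neighborhoods of infinity $N'\subseteq\operatorname{int}N$, each deformation retracting to its boundary, the cobordism $W=N-\operatorname{int}N'$ deformation retracts onto $\partial N$, and by duality the restriction $\partial N'\rightarrow\partial N$ has degree $\pm 1$, hence is $\pi_{1}$-surjective. Applied with $N=X^{q}-\operatorname{int}h_{\psi}(T_{\beta(n)})$ and $N'=X^{q}-\operatorname{int}gT_{\beta(m(n))}$ (both satisfy the retraction hypothesis, being images of Davis's standard neighborhoods of infinity under the global homeomorphisms $h_{\psi}$ and $g$), this yields the surjection $\ast^{\beta(m(n))}G\twoheadrightarrow\ast^{\beta(n)}G$ you need, and Grushko together with $\beta(m(n))<\beta(n)$ for large $n$ finishes. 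Your interleaving of the two cofinal systems would also work, but note that the surjectivity of the standard bonding maps which you cite to Davis is itself the content of Lemma \ref{Lemma: 1-sided h-cobordism}; without that degree/duality input the argument is not closed. So: discard the first route, and supply the one-sided h-cobordism argument to complete the second --- at which point your proof coincides with the paper's.
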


Note that Proposition \ref{p:qi} implies that $X^{q}$ is noncompressible for
any quasi-isometric metric. The following corollary follows immediately from
\cite{ADG97}.

\begin{corollary}
\label{Corollary: Noncompressible CAT(0) manifolds}For all $q\geq5$, there
exist closed locally CAT(0) $q$-manifolds with noncompressible universal covers.
\end{corollary}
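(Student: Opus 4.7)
The plan is to derive a contradiction from Lemma \ref{Lemma: Noncompressibility} by exploiting the rapid accumulation of chambers in metric balls and hence in the $T_{i}$ containing them. Suppose, toward a contradiction, that $h_{\psi} \colon X^{q} \to X^{q}$ is a compressing homeomorphism for some proper $\psi$ satisfying $\psi(r) \geq 2r$, with sublinear witness $\phi$. For each $r > 0$ set $K_{r} = B_{d}[x_{0}, r]$; then $\diam(K_{r}) \leq 2r \leq \psi(r)$, so $\diam(h_{\psi}(K_{r})) < \phi(r)$, and since $h_{\psi}(x_{0}) \in h_{\psi}(K_{r})$ we have $h_{\psi}(K_{r}) \subseteq B_{d}[h_{\psi}(x_{0}), \phi(r)]$. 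Since $h_{\psi}$ restricts to a homeomorphism $X^{q} \setminus K_{r} \to X^{q} \setminus h_{\psi}(K_{r})$, these two complements have identical topological invariants.

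Applying Lemma \ref{Lemma: Noncompressibility} around both $x_{0}$ and $h_{\psi}(x_{0})$ (valid with uniform constants since $\Gamma$ acts geometrically) yields $K_{r} \supseteq T_{\beta((r-\varepsilon-R)/K)}$ and $h_{\psi}(K_{r}) \subseteq T_{j}$ with $j = \beta(K(\phi(r)+\varepsilon+R)+S)$. The key observation is that the inclusion chain $N_{M} \hookrightarrow X^{q} \setminus K_{r} \hookrightarrow N_{m}$, for $M = \beta(K(r+\varepsilon+R)+S)$ and $m = \beta((r-\varepsilon-R)/K)$, composes to the natural surjection $\ast^{M} G \twoheadrightarrow \ast^{m} G$ killing the $M-m$ free factors corresponding to chambers removed from $T_{m}$ but present in $T_{M}$: each such generator loops around a now-accessible contractible chamber and is therefore nullhomotopic in $N_{m}$. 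Consequently $\pi_{1}(X^{q} \setminus K_{r}) \twoheadrightarrow \ast^{m} G$, forcing the minimum generator count of $\pi_{1}(X^{q} \setminus K_{r})$ to be at least $m \cdot \operatorname{rank}(G)$, which grows like $\beta(r)$.

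By contrast, a van Kampen analysis of $X^{q} \setminus h_{\psi}(K_{r}) = N_{j} \cup (T_{j} \setminus h_{\psi}(K_{r}))$ across $\partial T_{j}$, combined with the fact that $\partial T_{j} \hookrightarrow N_{j}$ is a homotopy equivalence, identifies $\pi_{1}(X^{q} \setminus h_{\psi}(K_{r}))$ with $\pi_{1}(T_{j} \setminus h_{\psi}(K_{r}))$. Because $T_{j}$ is a compact contractible $q$-manifold and $h_{\psi}(K_{r})$ is contractible (possibly after adjusting $d$ within its quasi-isometry class via Proposition \ref{p:qi}), one aims to show via an Alexander--Lefschetz duality argument inside $T_{j}$ that the minimum generator count of $\pi_{1}(T_{j} \setminus h_{\psi}(K_{r}))$ is $O(j \cdot \operatorname{rank}(G)) = O(\beta(\phi(r)))$. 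Combined with the homeomorphism-induced equality of generator counts, this yields $\beta(r) = O(\beta(\phi(r)))$; since $\phi$ is sublinear and $\beta$ is monotone increasing, $\beta(\phi(r))/\beta(r) \to 0$ as $r \to \infty$, a contradiction.

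The main obstacle is this upper bound: one must control the rank of $\pi_{1}(T_{j} \setminus h_{\psi}(K_{r}))$ by something proportional to the size $j$ of the ambient compact manifold $T_{j}$, rather than by the much larger rank of the complicated topological boundary $\partial h_{\psi}(K_{r}) \cong \partial K_{r}$ (which carries $\ast^{\beta(r)} G$). This requires exploiting, beyond the contractibility of $h_{\psi}(K_{r})$, the specific Davis structure of $T_{j}$ through a controlled duality argument that distinguishes between the intrinsic topology of $h_{\psi}(K_{r})$ and its embedding in $T_{j}$; this is the delicate core of the proof.
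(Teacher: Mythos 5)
There is a genuine gap --- in fact two. First, the corollary as the paper proves it is primarily an existence statement: one needs closed \emph{locally CAT(0)} $q$-manifolds ($q\geq 5$) whose universal covers are Davis manifolds built from a chamber $Q^{q}$ with $\pi_{1}(\partial Q^{q})\neq 1$. The paper gets this by citing \cite{ADG97} for the CAT(0) reflection-manifold construction (and a torsion-free finite-index subgroup to close up the quotient), and then quotes Theorem \ref{Theorem: Noncompressibility of Davis manifolds} together with Proposition \ref{p:qi}. Your proposal never produces the closed locally CAT(0) manifolds at all; it only attempts to re-prove noncompressibility of $X^{q}$, which is the already-established theorem, not the new content of the corollary.

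Second, your noncompressibility argument hinges on an upper bound for the rank of $\pi_{1}(T_{j}\setminus h_{\psi}(K_{r}))$ that you acknowledge you cannot supply, and the tool you propose for it cannot work. Since $Q^{q}$ is a compact contractible manifold, $\partial Q^{q}$ is a homology sphere, so $G=\pi_{1}(\partial Q^{q})$ is a \emph{perfect} group and $H_{1}(\ast^{k}G)=0$ for every $k$. Consequently every Alexander--Lefschetz duality computation sees only trivial first homology: it cannot distinguish $\ast^{m}G$ from $\ast^{M}G$, and in dimension $\geq 3$ the fundamental group of the complement of a compactum is not controlled by its homology (wild embeddings give complements with large perfect $\pi_{1}$). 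So the claimed bound $O(j\cdot\operatorname{rank}(G))$ is unsupported, and with it the whole quantitative comparison of $\beta(r)$ with $\beta(\phi(r))$ collapses. The paper's argument avoids computing $\pi_{1}$ of any ball complement: after normalizing $h_{\psi}(x_{0})$ by an isometry, the sandwich of Lemma \ref{Lemma: Noncompressibility} yields a single containment $h_{\psi}(T_{\beta(n+S+1)})\subseteq\operatorname{int}T_{\beta(n+S)}$; Lemma \ref{Lemma: 1-sided h-cobordism} (where duality is used only to show the retraction has degree $\pm 1$) then gives a degree $\pm 1$ map $\partial T_{\beta(n+S)}\rightarrow h_{\psi}(\partial T_{\beta(n+S+1)})$, hence a $\pi_{1}$-surjection $\ast^{\beta(n+S)}G\twoheadrightarrow\ast^{\beta(n+S+1)}G$, which already contradicts Grushko because the target has strictly more free factors. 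If you want to salvage your approach, you should replace the homological step by this degree-one/Grushko mechanism and add the missing appeal to \cite{ADG97} for the CAT(0) structure.
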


\begin{proof}
[Proof of Theorem \ref{Theorem: Noncompressibility of Davis manifolds}]We will
use the metric $d$ and the constants $K$, $\varepsilon$, $R$, and $S$ defined above.

By Lemma \ref{Lemma: Noncompressibility},
\[
B_{d}\left[  x_{0},\frac{n}{K}-\varepsilon-R\right]  \subseteq T_{\beta\left(
n+S\right)  }\subsetneq T_{\beta\left(  n+S+1\right)  }\subseteq B_{d}\left[
x_{0},K\left(  n+S+1\right)  +\varepsilon+R\right]
\]
for all $n\in\mathbb{N}$. Suppose that $X^{q}$ is compressible, and let
$\psi:\mathbb{R}^{+}\rightarrow\mathbb{R}^{+}$ be the identity function. Then
there exists a homeomorphism $h_{\psi}:X^{q}\rightarrow X^{q}$ and a sublinear
function $\phi:\mathbb{R}^{+}\rightarrow\mathbb{R}^{+}$ such that
\[
\operatorname*{diam}\left(  h_{\psi}\left(  C\right)  \right)  \leq\phi\left(
\operatorname*{diam}C\right)
\]
for all bounded $C\subseteq X^{q}$. By composing with an isometry from
$\Gamma$, we may assume that $d\left(  x_{0},h_{\psi}\left(  x_{0}\right)
\right)  \leq R$ and by the above arrangement,
\[
\operatorname*{diam}h_{\psi}\left(  B_{d}\left[  x_{0},K\left(  n+S+1\right)
+\varepsilon+R\right]  \right)  \leq\phi\left(  2\left(  K\left(
n+S+1\right)  +\varepsilon+R\right)  \right)
\]
for all $n\in\mathbb{N}$. Since
\[
\lim_{n\rightarrow\infty}\frac{\phi\left(  n\right)  }{n}=0
\]
then
\[
\lim_{n\rightarrow\infty}\frac{\phi\left(  2\left(  K\left(  n+S+1\right)
+\varepsilon+R\right)  \right)  }{2\left(  \frac{n}{K}-\varepsilon-R\right)
}=0
\]
By choosing $n$ so large that $\operatorname*{diam}h_{\psi}\left(
B_{d}\left[  x_{0},K\left(  n+S+1\right)  +\varepsilon+R\right]  \right)
<\frac{1}{2}\cdot\left(  \frac{n}{K}-\varepsilon-R\right)  $ and $R<\frac
{1}{2}\cdot\left(  \frac{n}{K}-\varepsilon-R\right)  $, we obtain
\[
h_{\psi}\left(  B_{d}\left[  x_{0},K\left(  n+S+1\right)  +\varepsilon
+R\right]  \right)  \subseteq B_{d}\left(  x_{0},\frac{n}{K}-\varepsilon
-R\right)
\]
As a result, $h_{\psi}\left(  T_{\beta\left(  n+S+1\right)  }\right)
\subseteq\operatorname*{int}T_{\beta\left(  n+S\right)  }$.

Let $W=T_{\beta\left(  n+S\right)  }-\operatorname*{int}\left(  h_{\psi
}\left(  T_{\beta\left(  n+S+1\right)  }\right)  \right)  $ and consider the
cobordism $\left(  W,\partial T_{\beta\left(  n+S\right)  },h_{\psi}\left(
\partial T_{\beta\left(  n+S+1\right)  }\right)  \right)  $. By Lemma
\ref{Lemma: 1-sided h-cobordism} below, $W$ deformation retracts onto
$h_{\psi}\left(  \partial T_{\beta\left(  n+S+1\right)  }\right)  $ and the
restriction of this deformation is a degree $\pm1$ map $d:\partial
T_{\beta\left(  n+S\right)  }\rightarrow h_{\psi}\left(  \partial
T_{\beta\left(  n+S+1\right)  }\right)  $. It is a standard fact that degree
$\pm1$ maps induce $\pi_{1}$-surjections, so we have a surjection $d_{\ast
}:\ast_{k=1}^{\beta\left(  n+S\right)  }G\rightarrow\ast_{k=1}^{\beta\left(
n+S+1\right)  }G$. But then the rank domain is at least as large as the rank
of the range, violating Grushko's Theorem.
\end{proof}

We conclude ths section with the technical lemma used above.

\begin{lemma}
\label{Lemma: 1-sided h-cobordism}Let $M^{n}$ be an orientable open
$n$-manifold containing closed neighborhoods of infinity $N$ and $N^{\prime}$,
each a codimension $0$ submanifold with tame (bicollared) boundary. Suppose
also that $N^{\prime}\subseteq\operatorname*{int}N$ and both $\partial
N\hookrightarrow N$ and $\partial N^{\prime}\hookrightarrow N^{\prime}$ are
homotopy equivalences. Let $W=N-\operatorname*{int}N^{\prime}$. Then

\begin{enumerate}
\item $W$ is a compact $n$-manifold with $\partial W=\partial N\sqcup\partial
N^{\prime}$,

\item $W$ deformation retracts onto $\partial N$, and

\item the resulting retraction $r:W\rightarrow\partial N$ restricts to a
degree $\pm1$ map $\partial N^{\prime}\rightarrow\partial N$.
\end{enumerate}
\end{lemma}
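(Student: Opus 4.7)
The plan is to handle the three claims in order. For part (1), the hypotheses that $N,N'$ are codimension-$0$ submanifolds with bicollared boundaries and $N'\subseteq\operatorname{int}N$ immediately give that $W=N\setminus\operatorname{int}N'$ is a codimension-$0$ submanifold whose boundary is $\partial N\sqcup\partial N'$; compactness follows since $W$ is the closed complement of one neighborhood of infinity inside another.

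For part (2), I would first construct a strong deformation retraction of $N$ onto $W$. Because $\partial N'\hookrightarrow N'$ is a homotopy equivalence and a cofibration (bicollared inclusion of ANRs), it is in fact a strong deformation retraction, so $N'$ deformation retracts onto $\partial N'$ while keeping $\partial N'$ pointwise fixed throughout. Extending this homotopy by the identity on $W$, using that the two pieces agree on $W\cap N'=\partial N'$, produces a strong deformation retraction $N\searrow W$; in particular, $W\hookrightarrow N$ is a homotopy equivalence. Since the composite $\partial N\hookrightarrow W\hookrightarrow N$ equals the hypothesized homotopy equivalence $\partial N\hookrightarrow N$, the two-out-of-three property forces $\partial N\hookrightarrow W$ to be a homotopy equivalence as well. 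This inclusion is again a cofibration between ANRs (as $\partial N$ is a bicollared boundary component of $W$), so the standard criterion promotes it to a strong deformation retraction of $W$ onto $\partial N$, yielding the retraction $r$.

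For part (3), I would compare fundamental classes. Choose orientations making $W$ an oriented compact $n$-manifold with boundary; the connecting map in the long exact sequence of $(W,\partial W)$ satisfies $\partial[W,\partial W]=[\partial N]+\epsilon[\partial N']$ in $H_{n-1}(\partial W)=H_{n-1}(\partial N)\oplus H_{n-1}(\partial N')$ for some $\epsilon=\pm 1$ dictated by induced boundary orientation conventions. Exactness of $H_n(W,\partial W)\xrightarrow{\partial}H_{n-1}(\partial W)\xrightarrow{i_*}H_{n-1}(W)$ forces $i_*[\partial N]=-\epsilon\,i_*[\partial N']$ in $H_{n-1}(W)$. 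Applying $r_*$, using $r|_{\partial N}=\operatorname{id}_{\partial N}$ to get $r_*i_*[\partial N]=[\partial N]$, and writing $r_*i_*[\partial N']=\deg(r|_{\partial N'})\cdot[\partial N]$, one concludes $\deg(r|_{\partial N'})=\pm 1$. If $\partial N$ or $\partial N'$ fails to be connected the argument runs componentwise; this suffices for the applications in Theorem~\ref{Theorem: Noncompressibility of Davis manifolds}, where the boundaries are connected sums of copies of $\partial Q$ and hence connected.

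The main technical point is the passage from homotopy equivalence to strong deformation retraction in part (2); this rests on the standard fact that a closed cofibration between ANRs is a strong deformation retract precisely when it is a homotopy equivalence, applied to the bicollared inclusions $\partial N'\hookrightarrow N'$ and $\partial N\hookrightarrow W$. Everything else is bookkeeping with fundamental classes, orientation conventions, and long exact sequences.
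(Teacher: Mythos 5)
Your proof is correct, but it diverges from the paper's argument in both of the substantive parts. For part (2), the paper simply composes the two hypothesized deformation retractions ($N\searrow\partial N$ and $N'\searrow\partial N'$) to produce the retraction of $W$ onto $\partial N$ directly, whereas you first build a strong deformation retraction of $N$ onto $W$ (by collapsing $N'$ rel $\partial N'$ and extending by the identity), then invoke two-out-of-three plus the fact that a cofibration which is a homotopy equivalence is a strong deformation retract. Your route is more formal but also more airtight: the paper's one-line composition requires some interpretation to make the domains and codomains match, while each step of yours is a standard citation. For part (3), the paper argues via Lefschetz duality: $H_*(W,\partial N)=0$ gives $H_*(W,\partial N')=0$, so $\operatorname{incl}_*:H_{n-1}(\partial N')\to H_{n-1}(W)$ is an isomorphism and $(r|_{\partial N'})_*=r_*\circ\operatorname{incl}_*$ is an isomorphism of infinite cyclic groups. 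You instead push the fundamental class $[W,\partial W]$ through the long exact sequence of the pair to show $i_*[\partial N]=-\epsilon\, i_*[\partial N']$ and then apply $r_*$. The duality argument buys the stronger conclusion that $\partial N'\hookrightarrow W$ is a homology equivalence in all degrees (i.e., $W$ is genuinely a one-sided homology h-cobordism), while your argument is more elementary, avoiding duality at the cost of some orientation bookkeeping and extracting only the top-degree information — which is all the degree claim needs. Both arguments, like the paper's, implicitly use connectedness of the boundary components; you are right to flag this, and it does hold in the application since the relevant boundaries are connected sums.
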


\begin{proof}
Assertion 1) is immediate. For assertion 2), let $H_{t}$ and $J_{t}$ be
deformation retractions of $N$ onto $\partial N$ and $N^{\prime}$ onto
$\partial N^{\prime}$, respectively. Then $H_{1}:N\rightarrow\partial N$ and
$J_{1}:N^{\prime}\rightarrow\partial N^{\prime}$ are retractions, and
$J_{1}\circ H_{t}$ is a deformation retraction of $W$ onto $\partial N$.

For assertion 3), note that since $\partial N$ is a connected orientable
$\left(  n-1\right)  $-manifold, $H_{n-1}\left(  \partial N\right)
\cong\mathbb{Z}$; and since $r_{\ast}:H_{n-1}\left(  W\right)  \rightarrow
H_{n-1}\left(  \partial N\right)  $ is an isomorphism, $H_{n-1}\left(
W\right)  \cong\mathbb{Z}$. By duality $H_{\ast}\left(  W,\partial N^{\prime
}\right)  =0$, so $\operatorname*{incl}_{\ast}:H_{n-1}\left(  \partial
N^{\prime}\right)  \rightarrow H_{n-1}\left(  W\right)  $ is also an
isomorphism. It follows that $\left(  \left.  r\right\vert _{\partial
N^{\prime}}\right)  _{\ast}=r_{\ast}\circ\operatorname*{incl}_{\ast}$ is an
isomorphism, so $\left\vert \deg\left(  \left.  r\right\vert _{\partial
N^{\prime}}\right)  \right\vert =1$.
\end{proof}

Unfortunately, crossing a Davis manifold with the Hilbert cube does not
improve its compressibility properties.

\begin{Theorem}
Let $\left(  X^{q},d\right)  $ be a Davis manifold of the type described in
Theorem \ref{Theorem: Noncompressibility of Davis manifolds}. Then
$X^{q}\times I^{\omega}$ is noncompressible under the corresponding $\ell_{2}%
$-metric or any metric quasi-isometric to it.
\end{Theorem}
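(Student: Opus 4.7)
The plan is to mirror the proof of Theorem \ref{Theorem: Noncompressibility of Davis manifolds}, working with $Y=X^{q}\times I^{\omega}$ under the $\ell_{2}$-metric, and replacing the compact sets $T_{i}$ by $T_{i}\times I^{\omega}$. By Proposition \ref{p:qi}, applied to the identity map between two quasi-isometric metrics on $Y$, it is enough to treat the $\ell_{2}$-metric itself. Because $I^{\omega}$ has bounded $\ell_{2}$-diameter, the three estimates of Lemma \ref{Lemma: Noncompressibility} carry over with only a change of constants: balls in $Y$ centered at $y_{0}=(x_{0},p_{0})$ remain sandwiched between sets of the form $T_{\beta(n_{1})}\times I^{\omega}$ and $T_{\beta(n_{2})}\times I^{\omega}$, with $n_{1},n_{2}$ depending linearly on the radius.

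Assume for contradiction that a compressing homeomorphism $h:Y\to Y$ exists for $\psi=\operatorname{id}$. Using the geometric $\Gamma$-action on the first factor, I would arrange $d(y_{0},h(y_{0}))\leq R$. The sublinearity of the compressing homeomorphism then gives, for sufficiently large $n$,
\[
h\bigl(T_{\beta(n+S+1)}\times I^{\omega}\bigr)\subset\operatorname{int}\bigl(T_{\beta(n+S)}\times I^{\omega}\bigr).
\]
Set $W=\bigl(T_{\beta(n+S)}\times I^{\omega}\bigr)-\operatorname{int}\bigl(h(T_{\beta(n+S+1)}\times I^{\omega})\bigr)$, a compact Hilbert cube manifold (by Theorem \ref{Wes71}) with two ``boundary'' pieces $\partial T_{\beta(n+S)}\times I^{\omega}$ and $h(\partial T_{\beta(n+S+1)}\times I^{\omega})$. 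Since $I^{\omega}$ is contractible, these pieces have fundamental groups $\ast_{k=1}^{\beta(n+S)}G$ and $\ast_{k=1}^{\beta(n+S+1)}G$ respectively.

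The heart of the argument is a Hilbert cube analog of Lemma \ref{Lemma: 1-sided h-cobordism}, producing a deformation retraction of $W$ onto $h(\partial T_{\beta(n+S+1)}\times I^{\omega})$ whose restriction to the outer boundary gives a $\pi_{1}$-surjective map
\[
r:\partial T_{\beta(n+S)}\times I^{\omega}\to h(\partial T_{\beta(n+S+1)}\times I^{\omega}).
\]
Assertions (1) and (2) transfer directly: the deformation retractions witnessing $\partial T_{i}\simeq N_{i}$ in $X^{q}$ cross with $\operatorname{id}_{I^{\omega}}$ to give the required retractions in $Y$. The main obstacle will be assertion (3), since the finite-dimensional Poincar\'e--Lefschetz duality argument that produced the degree $\pm 1$ conclusion has no direct counterpart in Hilbert cube topology. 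I would resolve this using the structure theory of Section \ref{Section: The Hilbert cube and Hilbert cube manifolds}: $W$ is a compact Hilbert cube manifold of the homotopy type of the finite CW complex $\partial T_{\beta(n+S+1)}$, so by Chapman's Theorem \ref{Cha74} we may identify $W\cong\partial T_{\beta(n+S+1)}\times I^{\omega}$ as Hilbert cube manifolds. Under this identification, $r$ is homotopic to a product map $r_{0}\times\operatorname{id}_{I^{\omega}}$ for a continuous $r_{0}:\partial T_{\beta(n+S)}\to\partial T_{\beta(n+S+1)}$ between closed orientable $(q-1)$-manifolds, and the finite-dimensional Lemma \ref{Lemma: 1-sided h-cobordism} applied to the underlying finite CW models shows $r_{0}$ has degree $\pm 1$, hence is $\pi_{1}$-surjective.

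The resulting surjection $\ast_{k=1}^{\beta(n+S)}G\twoheadrightarrow \ast_{k=1}^{\beta(n+S+1)}G$ contradicts Grushko's theorem on the rank of a free product, since $\beta(n+S)<\beta(n+S+1)$.
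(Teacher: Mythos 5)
Your setup — reducing to the $\ell_{2}$-metric, transferring the estimates of Lemma \ref{Lemma: Noncompressibility} to $X^{q}\times I^{\omega}$ using the boundedness of $I^{\omega}$, forming the Hilbert cube cobordism $W$ between $\partial T_{\beta(n+S)}\times I^{\omega}$ and $h(\partial T_{\beta(n+S+1)}\times I^{\omega})$, and aiming for a $\pi_{1}$-surjection that violates Grushko — matches the paper, and assertions (1) and (2) of the analogue of Lemma \ref{Lemma: 1-sided h-cobordism} do transfer as you say. The gap is in your treatment of assertion (3), which you correctly identify as the main obstacle but do not actually overcome. Two problems. First, compact Hilbert cube manifolds are classified by \emph{simple} homotopy type, so to conclude $W\cong\partial T_{\beta(n+S+1)}\times I^{\omega}$ from Chapman's theorem you would need the homotopy equivalence to have trivial Whitehead torsion, which nothing in your argument supplies. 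Second, and more seriously, the claim that ``the finite-dimensional Lemma \ref{Lemma: 1-sided h-cobordism} applied to the underlying finite CW models shows $r_{0}$ has degree $\pm1$'' is a non sequitur. The degree conclusion of that lemma is proved by Poincar\'{e}--Lefschetz duality inside an honest codimension-$0$ cobordism in an orientable $q$-manifold, and in your picture there is no finite-dimensional cobordism between $\partial T_{\beta(n+S)}$ and $\partial T_{\beta(n+S+1)}$: the compressing homeomorphism $h$ lives on $X^{q}\times I^{\omega}$ and need not respect the product structure, so $h(T_{\beta(n+S+1)}\times I^{\omega})$ has no model sitting in $X^{q}$ to which the lemma could apply. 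The map $r_{0}$ you extract is produced by purely homotopy-theoretic manipulations (contracting $I^{\omega}$ factors), and nothing in that process controls its degree; but $\pi_{1}$-surjectivity of $r_{0}$ is precisely the point at issue, so it cannot be assumed.

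For comparison, the paper's (very terse) proof goes in the opposite direction: instead of trying to run degree theory up in the Hilbert cube manifold and then descend, it uses the deformation retraction of the $Q$-manifold cobordism onto its inner frontier together with projection onto the $X^{q}$-factor to produce a deformation retraction of the finite-dimensional region $W\subseteq X^{q}$ onto $h_{\psi}(\partial T_{\beta(n+S+1)})$, and then reruns the duality and degree argument of Lemma \ref{Lemma: 1-sided h-cobordism} entirely inside the orientable $q$-manifold $X^{q}$, where it is valid, arriving at the same Grushko contradiction. If you want to repair your argument, you should descend to the finite-dimensional factor \emph{before} invoking any degree or duality statement, not after.
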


\begin{proof}
If we assume compressibility, the same sort of argument used above leads to a
cobordism of Hilbert cube manifolds $\left(  W\times I^{\omega},\partial
T_{\beta\left(  n+S\right)  }\times I^{\omega},h_{\psi}(\partial
T_{\beta\left(  n+S+1\right)  })\times I^{\omega}\right)  $ which deformation
retracts onto $h_{\psi}(\partial T_{\beta\left(  n+S+1\right)  })\times
I^{\omega}$. Projection yields a deformation retraction of $W$ onto $h_{\psi
}(\partial T_{\beta\left(  n+S+1\right)  })$ and the same contradiction
obtained earlier.
\end{proof}

\section{Some open questions\label{Section: Some open questions}}

The results presented in this paper raise several questions. The most obvious
revolve around the compressibility hypothesis. Roughly speaking,
compressibility allowed us to take advantage of the CAT(0) geometry of the
product spaces $T\times\widetilde{M}_{v}$ and $T\times\left(  \widetilde{M}%
_{v}\times I^{\omega}\right)  $, even when the corresponding proper cocompact
action is not by isometries. In the absence of compressibility, a different
strategy is clearly needed. Nonetheless, the questions remain.\medskip

\noindent\textbf{Question. }\emph{Suppose }$G$\emph{ is the fundamental group
of a finite connected graph of groups }$\left(  \mathcal{G},\Gamma\right)
$\emph{ with the property that each vertex group }$G_{v}$\emph{ is the
fundamental group of a closed aspherical manifold }$M_{v}$\emph{ and, for each
edge }$e$\emph{, the monomorphisms }$G_{e}\overset{\phi_{e}^{-}%
}{\longrightarrow}G_{i(e)}$\emph{ and }$G_{e}\overset{\phi_{e}^{+}%
}{\longrightarrow}G_{t(e)}$\emph{ are of finite index. Does }$G$\emph{ admit a
}$\mathcal{Z}$\emph{-structure? An }$\mathcal{EZ}$\emph{-structure?}\medskip

In attacking the above question, one is likely to bump up against the
unresolved nature of the Borel Conjecture or the Whitehead group conjecture. For that and other reasons, the
following is an appealing special case.\medskip

\noindent\textbf{Question. }\emph{Suppose }$G$\emph{ is the fundamental group
of a finite connected graph of groups }$\left(  \mathcal{G},\Gamma\right)
$\emph{ with the property that each vertex group }$G_{v}$\emph{ is the
fundamental group of a closed, locally CAT(0) manifold
}$M_{v}$\emph{ and, for each edge }$e$\emph{, the monomorphisms }%
$G_{e}\overset{\phi_{e}^{-}}{\longrightarrow}G_{i(e)}$\emph{ and }%
$G_{e}\overset{\phi_{e}^{+}}{\longrightarrow}G_{t(e)}$\emph{ are of finite
index. Does }$G$\emph{ admit a }$\mathcal{Z}$\emph{-structure? An
}$\mathcal{EZ}$\emph{-structure?}\medskip

We expect a positive answer. The point here is that \cite{bl} assures us that the Borel
Conjecture holds and the Whitehead group vanishes for these edge groups. In
addition, we still have CAT(0) geometry to work with.\medskip

If we are going to give up the compressibility hypothesis anyway, Approach II
provides a method for attacking the above questions in even greater
generality. In particular, there may be no need to confine ourselves to closed
aspherical manifolds as vertex groups.\medskip

\noindent\textbf{Question. }\emph{Suppose }$G$\emph{ is the fundamental group
of a finite connected graph of groups }$\left(  \mathcal{G},\Gamma\right)
$\emph{ with the property that each vertex group }$G_{v}$\emph{ is the
fundamental group of a finite aspherical CW-complex }$Y_{v}$\emph{ and, for
each edge }$e$\emph{, the monomorphisms }$G_{e}\overset{\phi_{e}%
^{-}}{\longrightarrow}G_{i(e)}$\emph{ and }$G_{e}\overset{\phi_{e}%
^{+}}{\longrightarrow}G_{t(e)}$\emph{ are of finite index. Does }$G$\emph{
admit a }$\mathcal{Z}$\emph{-structure? An }$\mathcal{EZ}$\emph{-structure?
Does it help to assume that} $\operatorname*{Wh}\left(  G_{e}\right)  =0$
\emph{for all edge groups? that each }$Y_{v}$\emph{ is nonpositively
curved?}\medskip

\noindent The point here is that, with the help of Hilbert cube technology, we
can often conclude that $G$ acts properly and cocompactly on a product space
or even a CAT(0) space---now of the form $T\times(\widetilde{Y}_{v}\times
I^{\omega})$.

\section{\bigskip Appendix: Graphs of covering spaces and actions on products}

In this appendix, we expand upon the notion of a \emph{graph of covering
spaces} as introduced in Section
\ref{Section: Graphs of nonpositively curved n-manifolds}.

A \emph{graph of pointed topological spaces} is a system $\left(
\mathcal{T},\Gamma\right)  $ consisting of:

\begin{enumerate}
\item a connected oriented graph $\Gamma$ with vertex set $E_{0}$ and edge set
$E_{1}$,

\item a collection $\mathcal{T}$ of pointed path-connected topological spaces
$\left(  Y_{s},y_{s}\right)  $ indexed by $E_{0}\cup E_{1}$, and

\item for each $e\in E_{1}$, a pair of continuous \emph{edge maps} $\left(
Y_{i\left(  e\right)  },y_{i\left(  e\right)  }\right)  \overset{p_{e}%
^{-}}{\longleftarrow}\left(  Y_{e},y_{e}\right)  \overset{p_{e}^{+}%
}{\longrightarrow}\left(  Y_{t\left(  e\right)  },y_{t\left(  e\right)
}\right)  $, each inducing a $\pi_{1}$-monomorphism.
\end{enumerate}

\noindent The \emph{total space} of $\left(  \mathcal{T},\Gamma\right)  $,
denoted $\operatorname*{Tot}\left(  \mathcal{T},\Gamma\right)  $, is the
adjunction space
\[
\operatorname*{Tot}\left(  \mathcal{T},\Gamma\right)  =\left(  \bigcup_{v\in
E_{0}}Y_{v}\right)  \cup\left(  \bigcup_{e\in E_{1}}Y_{e}\times\lbrack
0,1]\right)
\]
where $Y_{e}\times\lbrack0,1]$ is glued onto $Y_{o(e)}$ and $Y_{t(e)}$ using
$p_{e}^{-}$ and $p_{e}^{+}$ respectively. There is a natural projection map
$\pi:\operatorname*{Tot}\left(  \mathcal{T},\Gamma\right)  \rightarrow\Gamma$
for which the preimage of each $v\in E_{0}$, is a copy of $Y_{v}$ and for each
point $y$ lying on the interior of an edge $e$, $\pi^{-1}\left(  y\right)  $
is a copy of $Y_{e}$. There is a copy of $\Gamma$ sitting in
$\operatorname*{Tot}\left(  \mathcal{T},\Gamma\right)  $ made up of the images
of $y_{e}\times\left[  -1,1\right]  $ under the quotient map $q$. Under this
realization of $\Gamma$, $\pi$ may be viewed as a retraction. When each
$\left(  Y_{s},y_{s}\right)  $ is a CW-pair and each $p_{e}^{-}$ and
$p_{e}^{+}$ is cellular, $\operatorname*{Tot}\left(  \mathcal{T}%
,\Gamma\right)  $ inherits a natural CW-structure with $\Gamma$ a subcomplex
and $\pi$ a cellular map. Call $\left(  \mathcal{T},\Gamma\right)  $ a
\emph{compact }graph of pointed topological spaces if $\Gamma$ is a finite
graph and each edge and vertex space is compact. This is equivalent to
requiring $\operatorname*{Tot}\left(  \mathcal{T},\Gamma\right)  $ to be compact.

Given a graph of pointed topological spaces $\left(  \mathcal{T}%
,\Gamma\right)  $, there is an \emph{induced graph of groups }$\left(
\mathcal{G},\Gamma\right)  $ with vertex and edge groups $G_{s}=\pi_{1}\left(
Y_{s},y_{s}\right)  $, and edge monomorphisms $\phi_{e}^{-}=\left(  p_{e}%
^{-}\right)  _{\#}$ and $\phi_{e}^{+}=\left(  p_{e}^{+}\right)  _{\#}$.
Moreover, given a graph of groups $\left(  \mathcal{G},\Gamma\right)  $, it is
possible to realize $\left(  \mathcal{G},\Gamma\right)  $ as a graph of
pointed topological spaces $\left(  \mathcal{T},\Gamma\right)  $; if desired
the $\left(  Y_{s},y_{s}\right)  $ can be chosen to be CW-pairs and the maps
to be cellular. Since numerous choices are involved, there is a great deal of
flexibility in choosing a graph of topological spaces realizing a given graph
of groups.

Suppose each $G_{s}$ has presentation
\[
G_{s}=\langle A_{s}\,|\,R_{s}\rangle.
\]

\begin{definition}
\label{d:fundamental group of graph of groups} Given a maximal tree
$\Gamma_{0}\subseteq\Gamma$, the \emph{fundamental group of }$\left(
\mathcal{G},\Gamma\right)  $ \emph{based at }$\Gamma_{0}$ and denoted $\pi
_{1}\left(  \mathcal{G},\Gamma;\Gamma_{0}\right)  $ has generators
\[
\left(  \bigcup_{v\in E_{0}}A_{v}\right)  \cup\{t_{e}\,|\,e\in E_{1}\}
\]
and relations
\[
\bigcup_{v\in R_{v}}R_{v}\cup\{t_{e}^{-1}\phi_{e}^{-}(g)t_{e}=\phi_{e}%
^{+}(g)\,|\,g\in G_{e},e\in E_{1}\}\cup\{t_{e}=1\,|\,e\in\Gamma_{0}\}
\]

\end{definition}

A notable property of $\pi_{1}\left(  \mathcal{G},\Gamma;\Gamma_{0}\right)  $
is that it contains canonical copies of each $G_{v}$, and---up to
isomorphism---it does not depend on the choice of $\Gamma_{0}$ (although the
canonical copies of $G_{v}$ does). Furthermore, if $\left(  \mathcal{T}%
,\Gamma\right)  $ is a corresponding graph of pointed topological spaces,
there is a natural isomorphism between $\pi_{1}\left(  \mathcal{G}%
,\Gamma;\Gamma_{0}\right)  $ and $\pi_{1}\left(  \operatorname*{Tot}\left(
\mathcal{T},\Gamma\right)  ,\Gamma_{0}\right)  $. Here we use the fundamental
group of $\operatorname*{Tot}\left(  \mathcal{T},\Gamma\right)  $ based at
$\Gamma_{0}$ rather than the usual fundamental group based at a point. This is
a matter of convenience; if $v$ is any of the vertices of $\Gamma_{0}$, there
is a natural isomorphism between $\pi_{1}\left(  \operatorname*{Tot}\left(
\mathcal{T},\Gamma\right)  ,v\right)  $ and $\pi_{1}\left(
\operatorname*{Tot}\left(  \mathcal{T},\Gamma\right)  ,\Gamma_{0}\right)  $.
See \cite{geoghegan} for details.

Of particular interest to us are a pair of spaces on which $\pi_{1}\left(
\mathcal{G},\Gamma;\Gamma_{0}\right)  $ act: the Bass-Serre tree for $\left(
\mathcal{G},\Gamma\right)  $, and the universal cover
$\widetilde{\operatorname*{Tot}\left(  \mathcal{T},\Gamma\right)  }$ of a
corresponding total space.

\begin{itemize}
\item \emph{The Bass-Serre tree }$T$ for $\left(  \mathcal{G},\Gamma\right)  $
has vertex set $\widehat{E}_{0}$ containing one element for each left coset of
each $G_{v}\leq\pi_{1}\left(  \mathcal{G},\Gamma;\Gamma_{0}\right)  $ and edge
set $\widehat{E}_{1}$ containing one element for each left coset of each
$G_{e}\leq\pi_{1}\left(  \mathcal{G},\Gamma;\Gamma_{0}\right)  $. The edge
corresponding to a coset $aG_{e}$ connects the two vertices whose
corresponding cosets contain $aG_{e}$. The left action on $T$ is the obvious
one, a key fact being that the stabilizer of a vertex corresponding to a coset
$aG_{v}$ is the group $aG_{v}a^{-1}$ and the stabilizer of the edge
corresponding to a coset $aG_{e}$ is $aG_{e}a^{-1}$. The quotient of this
action is the original graph $\Gamma$; let $q:T\rightarrow\Gamma$ be that
quotient map.

\item \emph{The universal cover} $\widetilde{\operatorname*{Tot}\left(
\mathcal{T},\Gamma\right)  }$, on the other hand, admits a proper and free
$\pi_{1}\left(  \mathcal{G},\Gamma;\Gamma_{0}\right)  $-action (by covering
transformations); it is cocompact if and only if $\left(  \mathcal{T}%
,\Gamma\right)  $ is a compact graph of spaces.
\end{itemize}

The spaces $T$ and $\widetilde{\operatorname*{Tot}\left(  \mathcal{T}%
,\Gamma\right)  }$ and their actions are closely related. The space
$\widetilde{\operatorname*{Tot}\left(  \mathcal{T},\Gamma\right)  }$ can be
viewed as $\operatorname*{Tot}\left(  \mathcal{U},T\right)  $ where

\begin{enumerate}
\item The Bass-Serre tree $T$ is oriented so that $q:T\rightarrow\Gamma$ is
orientation preserving,

\item For each $s\in\widehat{E}_{0}\cup\widehat{E}$ the vertex/edge space is
$\left(  \widetilde{Y}_{s},\widetilde{y}_{s}\right)  $ where $\widetilde{Y}%
_{s}$ is the universal cover of $\widetilde{Y}_{q(s)}$ and $\widetilde{y}_{s}$
is a preimage of $y_{q\left(  s\right)  }$.

\item The edge maps $\left(  \widetilde{Y}_{i\left(  e\right)  }%
,\widetilde{y}_{i\left(  e\right)  }\right)  \overset{\widetilde{p_{e}^{-}%
}}{\longleftarrow}\left(  \widetilde{Y}_{e},\widetilde{y}_{e}\right)
\overset{\widetilde{p_{e}^{+}}}{\longrightarrow}\left(  Y_{t\left(  e\right)
},y_{t\left(  e\right)  }\right)  $ are the (unique) pointed lifts of the edge
maps $\left(  Y_{o\left(  q\left(  e\right)  \right)  },y_{o\left(  q\left(
e\right)  \right)  }\right)  \overset{p_{q\left(  e\right)  }^{-}%
}{\longleftarrow}\left(  Y_{q\left(  e\right)  },y_{q\left(  e\right)
}\right)  \overset{p_{q\left(  e\right)  }^{+}}{\longrightarrow}\left(
Y_{t\left(  q\left(  e\right)  \right)  },y_{t\left(  q\left(  e\right)
\right)  }\right)  $.
\end{enumerate}

\noindent As such, there is a $\pi_{1}\left(  \mathcal{G},\Gamma;\Gamma
_{0}\right)  $-equivariant projection $\pi:\widetilde{\operatorname*{Tot}%
\left(  \mathcal{T},\Gamma\right)  }\rightarrow T$ so that: for each
$v\in\widehat{E}_{0}$ corresponding to coset $aG_{v}$, $\pi^{-1}\left(
v\right)  \approx\widetilde{Y}_{v}$; and for each point $y$ on the interior of
$e\in\widehat{E}_{1}$ corresponding to coset $aG_{e}$, $\pi^{-1}\left(
y\right)  \approx\widetilde{Y}_{e}$. (An alternative construction of the
Bass-Serre tree is as the quotient of $\widetilde{\operatorname*{Tot}\left(
\mathcal{T},\Gamma\right)  }$ obtained by identifying these covering spaces to
points. See \cite{geoghegan}.) The equivariance of $\pi$ means that, for a
vertex $v$ of $T$ stabilized by $aG_{v}a^{-1}$, the set $\pi^{-1}\left(
v\right)  \approx\widetilde{Y}_{v}$ is stabilized (setwise) by $aG_{v}a^{-1}$.

\subsection{Graphs of covering
spaces\label{Subsection: Graphs of covering spaces}}

If each map in a graph of pointed topological spaces $\left(  \mathcal{T}%
,\Gamma\right)  $ is a covering projection, we call $\left(  \mathcal{T}%
,\Gamma\right)  $ a \emph{graph of covering spaces}. By covering space theory,
every group monomorphism can be realized as a covering projection, but
realizing an arbitrary graph of groups $\left(  \mathcal{G},\Gamma\right)  $
as a graph of covering spaces requires compatibility between these
projections. To obtain such a realization, choices are required under which
each edge space $Y_{e}$ \emph{simultaneously} covers $Y_{i\left(  e\right)  }$
and $Y_{t\left(  e\right)  }$ (in a manner that realizes the given group
monomorphisms). When this is possible, we say that $\left(  \mathcal{G}%
,\Gamma\right)  $ is \emph{realizable by covering spaces.}

\begin{example}
Suppose that each vertex and edge group is isomorphic to $\mathbb{Z}$, so that
$\pi_{1}\left(  \mathcal{G},\Gamma;\Gamma_{0}\right)  $ is a \emph{generalized
Baumslag-Solitar group}. By placing a copy of $S^{1}$ at each vertex and
noting that every finite-sheeted cover of $S^{1}$ is homeomorphic to $S^{1}$,
we see that $\left(  \mathcal{G},\Gamma\right)  $ is realizable by compact
graph of finite-sheeted covering spaces.
\end{example}

\begin{example}
As a generalization of the above, place a copy of $\mathbb{Z}^{n}$ on each
vertex and edge of $\Gamma$. For edge maps, choose arbitrary monomorphisms of
varying index. This graph can be realized by a compact graph of finite-sheeted
covering spaces, where the space on each vertex and edge is the $n$-torus
$T^{n}$.
\end{example}

\begin{example}
Suppose $\left(  \mathcal{G},\Gamma\right)  $ is a graph of finitely generated
free groups, and all monomorphisms are finite index. If we restrict ourselves
to graphs as vertex and edge spaces, there are instances where it is
impossible realize $\left(  \mathcal{G},\Gamma\right)  $ with covering spaces.
However, if we allow the use of $3$-dimensional orientable handlebodies, where
genus determines topological type, we can realize any such $\left(
\mathcal{G},\Gamma\right)  $ as a compact graph of finite-sheeted covering
spaces. (Unfortunately, for our purposes, the corresponding universal covers
will not be compressible.)
\end{example}

A key ingredient in our main theorems is the following general fact.

\begin{lemma}
\label{Lemma: Product structure for graphs of covering spaces}Let $\left(
\mathcal{T},\Gamma\right)  $ be a graph of covering spaces and $v_{0}\in
\Gamma$ be a vertex. Then $\widetilde{\operatorname*{Tot}\left(
\mathcal{T},\Gamma\right)  }$ is homeomorphic to $T\times\widetilde{Y}_{v_{0}%
}$, where $T$ is the Bass-Serre tree corresponding to the induced graph of groups.
\end{lemma}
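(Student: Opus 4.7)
The plan is to exploit the fact that a covering map between simply connected spaces is a homeomorphism, together with the tree structure of $T$, to piece together a product decomposition by ``parallel transport'' along $T$.

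First I would observe the key local fact: for each edge $e$ of $\Gamma$, the induced edge maps $\widetilde{p_e^\pm}\colon \widetilde{Y}_e\to \widetilde{Y}_{i(e)},\widetilde{Y}_{t(e)}$ between universal covers are homeomorphisms. Indeed, $p_e^-$ is a covering, so any lift $\widetilde{p_e^-}$ is itself a covering; but $\widetilde{Y}_e$ and $\widetilde{Y}_{i(e)}$ are simply connected, and a covering between simply connected spaces is a homeomorphism. The same reasoning applies to $\widetilde{p_e^+}$. As a consequence, for any edge $\widetilde e$ in $T$ with endpoints $\tilde v,\tilde w$ (covering an edge $e$ with endpoints $i(e),t(e)$), the corresponding edge maps in the graph of spaces $(\mathcal{U},T)$ presenting $\widetilde{\operatorname{Tot}(\mathcal{T},\Gamma)}$ are homeomorphisms $\widetilde{Y}_{\widetilde e}\to \widetilde{Y}_{\tilde v},\widetilde{Y}_{\tilde w}$.

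Next I would use the tree structure of $T$ to build a canonical identification of every vertex/edge space with $\widetilde{Y}_{v_0}$. Fix a lift $\tilde v_0\in T$ of $v_0$ and an identification $\widetilde Y_{\tilde v_0}=\widetilde Y_{v_0}$. For any other vertex $\tilde v$ of $T$, let $\tilde v_0=\tilde u_0,\tilde u_1,\dots,\tilde u_k=\tilde v$ be the unique geodesic edge path in $T$, with edges $\widetilde e_j$ joining $\tilde u_{j-1}$ and $\tilde u_j$. Composing the homeomorphisms $\widetilde Y_{\tilde u_{j-1}}\leftarrow \widetilde Y_{\widetilde e_j}\rightarrow \widetilde Y_{\tilde u_j}$ yields a homeomorphism $\phi_{\tilde v}\colon \widetilde Y_{v_0}\to \widetilde Y_{\tilde v}$; because $T$ is a tree there is no monodromy, so $\phi_{\tilde v}$ is well defined (independent of any other choices). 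Similarly, for each edge $\widetilde e$ of $T$, combine the chosen parallel transport with the inverse of one of its edge homeomorphisms to produce a homeomorphism $\phi_{\widetilde e}\colon \widetilde Y_{v_0}\to \widetilde Y_{\widetilde e}$ compatible with $\phi_{\tilde v}$ at both endpoints.

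Now I would assemble these into the desired map. Define
\[
\Phi\colon T\times \widetilde Y_{v_0}\;\longrightarrow\;\widetilde{\operatorname*{Tot}(\mathcal T,\Gamma)}=\operatorname*{Tot}(\mathcal U,T)
\]
by sending $(\tilde v,y)\mapsto \phi_{\tilde v}(y)$ on each vertex slice and $((\widetilde e,t),y)\mapsto (\phi_{\widetilde e}(y),t)$ on each mapping-cylinder slice $\widetilde e\times[0,1]$. Consistency of the vertex and edge definitions at $t=0,1$ is exactly the compatibility built into the $\phi$'s, so $\Phi$ is well defined. It is a bijection because each $\phi_{\tilde v}$, $\phi_{\widetilde e}$ is a bijection and the projection $\pi\colon \operatorname{Tot}(\mathcal U,T)\to T$ has the asserted preimages. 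Continuity in each ``slab'' $\pi^{-1}(\mathrm{star}(\tilde v))$ follows from continuity of the finitely many edge homeomorphisms involved, and a routine check on the quotient topology of the adjunction space shows $\Phi$ and $\Phi^{-1}$ are continuous.

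The one step that requires genuine care, and which I would flag as the main obstacle, is the well-definedness of the parallel-transport $\phi_{\tilde v}$ and its global continuity: the argument breaks if $T$ had a cycle, and even in the tree case one must verify that the ``chain'' of homeomorphisms depends continuously on the endpoint $\tilde v$ as we move across edges of $T$. This is precisely why working with the Bass--Serre tree (as opposed to $\Gamma$ itself or some other graph) is essential, and why the product structure $T\times \widetilde Y_{v_0}$ is genuinely a global product rather than merely a locally trivial bundle.
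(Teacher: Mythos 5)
Your proof is correct, and it shares its key first step with the paper's: both arguments begin by observing that the lifted edge maps $\widetilde{p_e^{\pm}}$ are covering maps of simply connected spaces, hence homeomorphisms, so that every double mapping cylinder in $\widetilde{\operatorname*{Tot}(\mathcal{T},\Gamma)}$ is an honest product. After that the two proofs diverge. The paper finishes in one line by soft machinery: the product structure on each mapping cylinder makes $\pi\colon\widetilde{\operatorname*{Tot}(\mathcal{T},\Gamma)}\to T$ a fiber bundle, and a fiber bundle over a contractible base is trivial. You instead build the trivialization by hand, transporting $\widetilde{Y}_{v_0}$ along the unique geodesic in $T$ from a base vertex to each vertex and edge, with well-definedness guaranteed by the absence of cycles. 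Your route is more elementary (no appeal to bundle triviality over contractible bases) at the cost of the bookkeeping you flag at the end; it also has the virtue of producing the identifications explicitly, which is essentially what the paper does anyway immediately after the lemma, where the maps $f_e=\widetilde{p_e^{+}}\circ(\widetilde{p_e^{-}})^{-1}$ and $h_v=f_{e_k}\circ\cdots\circ f_{e_1}$ along reduced edge paths in the maximal tree are introduced to describe the $G$-action on the $\widetilde{Y}_{v_0}$ factor. One small caution: your continuity check on slabs $\pi^{-1}(\operatorname{star}(\tilde v))$ implicitly wants open stars to form a suitable cover and the adjunction space to carry the weak topology with respect to its pieces; this is harmless under the standing finiteness/local-finiteness assumptions but is the one place where your explicit approach requires slightly more care than the bundle argument.
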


\begin{proof}
Since each map $p_{e}^{-}:Y_{e}\rightarrow Y_{i\left(  e\right)  }$ [resp.,
$p_{e}^{+}:Y_{e}\rightarrow Y_{t\left(  e\right)  }$] is a covering map, so is
each $\widetilde{p_{e}^{-}}:\widetilde{Y}_{e}\rightarrow\widetilde{Y}%
_{i\left(  e\right)  }$ [resp., $\widetilde{p_{e}^{+}}:\widetilde{Y}%
_{e}\rightarrow\widetilde{Y}_{t\left(  e\right)  }$]. By uniqueness of
universal covers, these latter maps are necessarily homeomorphism. Thus, all
of the double mapping cylinders in the construction of
$\widetilde{\operatorname*{Tot}\left(  \mathcal{T},\Gamma\right)  }$ are
actual products. It follows easily that $\pi:\widetilde{\operatorname*{Tot}%
\left(  \mathcal{T},\Gamma\right)  }\rightarrow T$ is a fiber bundle. Since
$T$ is contractible, it is a trivial bundle.
\end{proof}

Next, for graphs of covering spaces, we give a concrete description of the
action of $\pi_{1}\left(  \operatorname*{Tot}\left(  \mathcal{T}%
,\Gamma\right)  ,\Gamma_{0}\right)  $ on $\widetilde{\operatorname*{Tot}%
\left(  \mathcal{T},\Gamma\right)  }$, where the latter is viewed as
$T\times\widetilde{Y}_{v_{0}}$. Specifically, we will define a $\pi_{1}\left(
\operatorname*{Tot}\left(  \mathcal{T},\Gamma\right)  ,\Gamma_{0}\right)
$-action on $\widetilde{Y}_{v_{0}}$ which, when paired diagonally with the
Bass-Serre action on $T$, gives the desired covering space action on
$T\times\widetilde{Y}_{v_{0}}$.

To define the desired action on $\widetilde{Y}_{v_{0}}$ it is enough to:

\begin{itemize}
\item define, for each $v\in E_{0}$, a homomorphism $\theta_{v}:G_{v}%
\rightarrow\operatorname*{Homeo}(\widetilde{Y}_{v_{0}})$,

\item define a homomorphism $\theta_{F}:F\left(  E_{1}\right)  \rightarrow
\operatorname*{Homeo}(\widetilde{Y}_{v_{0}}),$

\item let $\overline{\Theta}:\left(  \underset{_{v\in E_{0}}}{\ast}%
G_{v}\right)  \ast F\left(  E_{1}\right)  \rightarrow\operatorname*{Homeo}%
(\widetilde{Y}_{v_{0}})$ be the union of the above homomorphisms, and

\item check that all relators described in Definition
\ref{d:fundamental group of graph of groups} are sent to $\operatorname*{id}%
_{Y_{v_{0}}}$.
\end{itemize}

Toward that end, inductively orient the edges of $\Gamma_{0}$ outward away
from $v_{0}$. Then, orient each edge not in $\Gamma_{0}$ arbitrarily. By
changing some symbols, but without loss of generality, we may assume this is
the orientation on $\Gamma$ used in the basic definitions. As noted above, for
each $e\in E_{1}$ we have homeomorphisms

\begin{center}
$\begin{CD}
(\widetilde{Y}_{i(e)},\widetilde{y}_{i(e)}) @<\widetilde{p_{e}^{-}}<< (\widetilde{Y}_{e},\widetilde{y}_{e}) @>\widetilde{p_{e}^{+}
}>>(\widetilde{Y}_{t(e)},\widetilde{y}_{t(e)})
\end{CD}$
\end{center}

Let $f_{e}:(\widetilde{Y}_{i(e) },\widetilde{y}_{i(e)}) \rightarrow
(\widetilde{Y}_{t(e) },\widetilde{y}_{t(e) }) $ be the composition
$\widetilde{p_{e}^{+}}\circ( \widetilde{p_{e}^{-}}) ^{-1}$ and for each $v\in
E_{0}$, let $h_{v}=f_{e_{k}}\circ\cdots\circ f_{e_{1}}:Y_{v_{0}}\rightarrow
Y_{v}$ where $e_{1}\ast\cdots\ast e_{k}$ is the reduced edge path in
$\Gamma_{0}$ from $v_{0}$ to $v$. (Let $h_{v_{0}}=\operatorname*{id}%
_{\widetilde{Y}_{v_{0}}}$.) Since basepoints have been chosen, we have a
well-defined $G_{v}$-action on each vertex space $\widetilde{Y}_{v}$. Viewing
each $\alpha\in G_{v}$ as as a self-homeomorphism of $\widetilde{Y}_{v}$,
define $\theta_{v}:G_{v}\rightarrow\operatorname*{Homeo}(\widetilde{Y}_{v_{0}%
})$ by $\theta_{v}\left(  \alpha\right)  =h_{v}^{-1}\alpha h_{v}$.

To define $\theta_{F}:F\left(  E_{1}\right)  \rightarrow\operatorname*{Homeo}%
(\widetilde{Y}_{v_{0}})$ we need only specify the images of the generators. Do
this by setting $\theta_{F}\left(  e\right)  =h_{t\left(  e\right)  }%
^{-1}\circ f_{e}\circ h_{i\left(  e\right)  }$. Note that if $e\in\Gamma_{0}$,
then $f_{e}\circ h_{i\left(  e\right)  }=h_{t\left(  e\right)  }$, so
$\theta_{F}\left(  e\right)  =\operatorname*{id}_{Y_{v_{0}}}$; hence all type
(ii) relators are sent to the identity element. The key to checking that type
(i) relators $r=e\cdot\left(  p_{e}^{-}\right)  _{\#}\left(  \beta\right)
\cdot e^{-1}\cdot\left(  \left(  p_{e}^{+}\right)  _{\#}\left(  \beta\right)
\right)  ^{-1}$ are sent to the identity is the observation that $( p_{e}^{-})
_{\#}(\beta) =\widetilde{p_{e}^{-}}\circ\beta\circ(\widetilde{p_{e}^{-}})
^{-1}$ and $(p_{e}^{+})_{\#}\left(  \beta\right)  =\widetilde{p_{e}^{+}}%
\circ\beta\circ(\widetilde{p_{e}^{+}})^{-1}$.

It suffices to show that $\overline{\Theta}(e\cdot\left(  p_{e}^{-}\right)
_{\#}\left(  \beta\right)  \cdot e^{-1})=\overline{\Theta}\left(  \left(
p_{e}^{+}\right)  _{\#}\left(  \beta\right)  \right)  $. We provide that calculation.%

\[
\overline{\Theta}\left(  e\cdot\left(  p_{e}^{-}\right)  _{\#}\left(
\beta\right)  \cdot e^{-1}\right)  =\theta_{F}\left(  e\right)  \cdot
\theta_{i\left(  e\right)  }(\left(  p_{e}^{-}\right)  _{\#}\left(
\beta\right)  )\cdot\theta_{F}\left(  e\right)  ^{-1}%
\]
\[
=(h_{t\left(  e\right)  }^{-1}\circ f_{e}\circ h_{i\left(  e\right)  }%
)\cdot(h_{i\left(  e\right)  }^{-1}\circ\left(  p_{e}^{-}\right)  _{\#}\left(
\beta\right)  \circ h_{i\left(  e\right)  })\cdot(h_{i\left(  e\right)  }%
^{-1}\circ f_{e}^{-1}\circ h_{t\left(  e\right)  })
\]
\[
=(h_{t\left(  e\right)  }^{-1}\circ f_{e}\circ h_{i\left(  e\right)  }%
)\cdot(h_{i\left(  e\right)  }^{-1}\circ\left(  p_{e}^{-}\right)  _{\#}\left(
\beta\right)  \circ h_{i\left(  e\right)  })\cdot(h_{i\left(  e\right)  }%
^{-1}\circ f_{e}^{-1}\circ h_{t\left(  e\right)  })
\]
\[
=h_{t\left(  e\right)  }^{-1}\circ f_{e}\circ\left(  p_{e}^{-}\right)
_{\#}\left(  \beta\right)  \circ f_{e}^{-1}\circ h_{t\left(  e\right)  }%
\]
\[
=h_{t\left(  e\right)  }^{-1}\circ f_{e}\circ\left(  \widetilde{p_{e}^{-}%
}\circ\beta\circ\left(  \widetilde{p_{e}^{-}}\right)  ^{-1}\right)  \circ
f_{e}^{-1}\circ h_{t\left(  e\right)  }%
\]
\[
=h_{t\left(  e\right)  }^{-1}\circ\left(  \widetilde{p_{e}^{+}}\circ\left(
\widetilde{p_{e}^{-}}\right)  ^{-1}\right)  \circ\left(  \widetilde{p_{e}^{-}%
}\circ\beta\circ\left(  \widetilde{p_{e}^{-}}\right)  ^{-1}\right)
\circ\left(  \widetilde{p_{e}^{-}}\circ\left(  \widetilde{p_{e}^{+}}\right)
^{-1}\right)  \circ h_{t\left(  e\right)  }%
\]
\[
=h_{t\left(  e\right)  }^{-1}\circ\widetilde{p_{e}^{+}}\circ\beta\circ\left(
\widetilde{p_{e}^{+}}\right)  ^{-1}\circ h_{t\left(  e\right)  }%
\]
\[
=h_{t\left(  e\right)  }^{-1}\circ(p_{e}^{+})_{\#}(\beta)\circ h_{t\left(
e\right)  }%
\]
\[
=\theta_{t\left(  e\right)  }(\left(  p_{e}^{+}\right)  _{\#}\left(
\beta\right)  )
\]
\[
=\overline{\Theta}\left(  \left(  p_{e}^{+}\right)  _{\#}\left(  \beta\right)
\right)
\]

Since all relators of the relative presentation of $\pi_{1}\left(
\operatorname*{Tot}\left(  \mathcal{T},\Gamma\right)  ,\Gamma_{0}\right)  $
are sent to the identity element, $\overline{\Theta}$ induces a homomorphism
$\Theta:\pi_{1}\left(  \operatorname*{Tot}\left(  \mathcal{T},\Gamma\right)
,\Gamma_{0}\right)  \rightarrow\operatorname*{Homeo}(\widetilde{Y}_{v_{0}})$.
This is the desired action. As with the Bass-Serre action of $\pi_{1}\left(
\operatorname*{Tot}\left(  \mathcal{T},\Gamma\right)  ,\Gamma_{0}\right)
$-action on $T$, we do not expect the $\pi_{1}\left(  \operatorname*{Tot}%
\left(  \mathcal{T},\Gamma\right)  ,\Gamma_{0}\right)  $-action on
$\widetilde{Y}_{v_{0}}$ to be proper or free. But combined, these two actions
yield a proper free action on $T\times\widetilde{Y}_{v_{0}}$.

\begin{remark}
\label{Remark: Action by quasi-isometric homeomorphisms}In cases where
$\left(  \mathcal{T},\Gamma\right)  $ is a graph of finite-sheeted covering
spaces, all of the lift homeomorphisms $\left(  \widetilde{Y}_{i\left(
e\right)  },\widetilde{y}_{i\left(  e\right)  }\right)
\overset{\widetilde{p_{e}^{-}}}{\longleftarrow}\left(  \widetilde{Y}%
_{e},\widetilde{y}_{e}\right)  \overset{\widetilde{p_{e}^{+}}}{\longrightarrow
}\left(  \widetilde{Y}_{t\left(  e\right)  },\widetilde{y}_{t\left(  e\right)
}\right)  $ are quasi-isometric homeomorphisms. Since the group action
$\Theta:\pi_{1}\left(  \operatorname*{Tot}\left(  \mathcal{T},\Gamma\right)
,\Gamma_{0}\right)  \rightarrow\operatorname*{Homeo}(\widetilde{Y}_{v_{0}})$,
as described above, takes each group element to a finite composition of lift
homeomorphisms, inverses of those homeomorphisms, and isometries of vertex
spaces, the action is by quasi-isometric homeomorphisms.
\end{remark}

\begin{bibdiv}
	\begin{biblist}
	
	\bib {amn}{article}{
	AUTHOR = {Abels, H.},
	Author= {Manoussos, A.}, 
	Author={ Noskov, G.},
	TITLE = {Proper actions and proper invariant metrics},
	JOURNAL = {J. Lond. Math. Soc. (2)},
	FJOURNAL = {Journal of the London Mathematical Society. Second Series},
	VOLUME = {83},
	YEAR = {2011},
	NUMBER = {3},
	PAGES = {619--636},
	ISSN = {0024-6107},
	MRCLASS = {54H15 (37B05 54D45 54H20)},
	MRNUMBER = {2802502},
	MRREVIEWER = {Alica Miller},
	DOI = {10.1112/jlms/jdq091},
	URL = {http://dx.doi.org/10.1112/jlms/jdq091},
}

\bib{ADG97}{book}{
  author={Ancel, Ric},
	author={Davis, Michael}
	author={Guilbault, Craig}

  title={CAT(0) Reflection Manifolds},
  series={Geometric Topology (Athens, GA 1993)},
	publisher = {AMS/IP Stud. Adv. Math 2},
  note={441-445},
}

\bib{bestvina}{article}{
  author={Bestvina, Mladen},

  title={Local homology properties of boundaries of groups},
  journal={Michigan Math. J.},
  volume={43},
  date={1996},
  number={1},
  pages={123-139},
}

\bib{bl}{article}{
  author={Bartels, Arthur},
  author={Luck, Wolfgang},
  title={The Farrell-Jones Conjecture for hyperbolic and CAT(0) groups},
  journal={Annals of Mathematics},
  volume={175},
  date={2012},

  pages={631-689},
}

\bib{bd}{article}{
author = {G. Bell},
	author = {A. Dranishnikov},

	title = {On asymptotic dimension of groups acting on trees},
	journal = {Geom. Dedicata},
	pages = {89-101},
	volume = {103},
year = {2004}
}
\bib{bh}{book}{
  author={Bridson, Martin R.},
  author={Haefliger, Andr{\'e}},
  title={Metric spaces of non-positive curvature},
  series={Grundlehren der Mathematischen Wissenschaften [Fundamental
  Principles of Mathematical Sciences]},
  volume={319},
  publisher={Springer-Verlag, Berlin},
  date={1999},
  pages={xxii+643},
  isbn={3-540-64324-9},
  doi={10.1007/978-3-662-12494-9},
}

\bib{CaPe95}{article}{
    AUTHOR = {Carlsson, Gunnar}
		Author = {Pedersen, Erik Kj{\ae}r},
     TITLE = {Controlled algebra and the {N}ovikov conjectures for {$K$}-
              and {$L$}-theory},
   JOURNAL = {Topology},
  FJOURNAL = {Topology. An International Journal of Mathematics},
    VOLUME = {34},
      YEAR = {1995},
    NUMBER = {3},
     PAGES = {731--758},
      ISSN = {0040-9383},
     CODEN = {TPLGAF},
   MRCLASS = {19D10 (19G24 55R40)},
  MRNUMBER = {1341817},
MRREVIEWER = {A. A. Ranicki},
       DOI = {10.1016/0040-9383(94)00033-H},
       URL = {http://dx.doi.org/10.1016/0040-9383(94)00033-H},
}

\bib{Cha77}{article}{
    AUTHOR = {Chapman, T. A.},
     TITLE = {Simple homotopy theory for {ANR}'s},
   JOURNAL = {General Topology and Appl.},
  FJOURNAL = {General Topology and its Applications},
    VOLUME = {7},
      YEAR = {1977},
    NUMBER = {2},
     PAGES = {165--174},
      ISSN = {0016-660X},
   MRCLASS = {55C15 (57C10)},
  MRNUMBER = {500913},
MRREVIEWER = {R. C. Lacher},
}

\bib{Cha74}{article}{
    AUTHOR = {Chapman, T. A.},
     TITLE = {Topological invariance of {W}hitehead torsion},
   JOURNAL = {Amer. J. Math.},
  FJOURNAL = {American Journal of Mathematics},
    VOLUME = {96},
      YEAR = {1974},
     PAGES = {488--497},
      ISSN = {0002-9327},
   MRCLASS = {57C10},
  MRNUMBER = {391109},
MRREVIEWER = {K. W. Kwun},
       DOI = {10.2307/2373556},
       URL = {https://doi.org/10.2307/2373556},
}
\bib{Cha75}{book}{
    AUTHOR = {Chapman, T. A.},
     TITLE = {Lectures on {H}ilbert cube manifolds},
      NOTE = {Expository lectures from the CBMS Regional Conference held at
              Guilford College, October 11-15, 1975,
              Regional Conference Series in Mathematics, No. 28},
 PUBLISHER = {American Mathematical Society, Providence, R. I.},
      YEAR = {1976},
     PAGES = {x+131},
   MRCLASS = {57A20 (58B05)},
  MRNUMBER = {0423357},
MRREVIEWER = {Robert A. McCoy},
}

\bib{Coh73}{article}{
    AUTHOR = {Cohen, Marshall M.},
     TITLE = {A course in simple-homotopy theory},
      NOTE = {Graduate Texts in Mathematics, Vol. 10},
 PUBLISHER = {Springer-Verlag, New York-Berlin},
      YEAR = {1973},
     PAGES = {x+144},
   MRCLASS = {57C10},
  MRNUMBER = {0362320},
MRREVIEWER = {R. M. F. Moss},
}

\bib{Dav83}{article}{
	author = {Davis, Michael~W.},
	title = {Groups generated by reflections and aspherical manifolds not covered by {E}uclidean space},
	date = {1983},
	issn = {0003-486X},
	journal = {Ann. of Math. (2)},
	volume = {117},
	number = {2},
	pages = {293\ndash 324},
	review = {\MR{86d:57025}}, }

\bib{dr}{article}{
	author = {A. Dranishnikov},
	title = {On {B}estvina-{M}ess Formula},
	journal = {Contemporary Mathematics},
	pages = {77--85},
	volume = {394},
	number = {1},
	year = {2006}
}

\bib{Edw80}{book}{
    AUTHOR = {Edwards, Robert D.},
     TITLE = {Characterizing infinite-dimensional manifolds topologically
              (after {H}enryk {T}oru\'nczyk)},
 BOOKTITLE = {S\'eminaire {B}ourbaki (1978/79)},
    SERIES = {Lecture Notes in Math.},
    VOLUME = {770},
     PAGES = {278--302},
 PUBLISHER = {Springer, Berlin-New York},
      YEAR = {1980},
   MRCLASS = {57N20},
  MRNUMBER = {572429},
MRREVIEWER = {James M. Kister},
}

\bib{fm}{article}{
  author={Farb, Benson},
  author={Mosher, Lee},
  title={On the asymptotic geometry of abelian-by-cyclic groups},
  journal={Acta Mathematica},
  volume={184},
  date={2000},
  number={2},
  pages={145-202},
}

\bib{fj}{article}{
  author={Farrell, Thomas},
  author={Jones, Lowell},
  title={A topological analogue of Mostow's rigidity theorem},
  journal={J. Amer. Math. Soc},
  volume={2},
  date={1989},

  pages={257-370},
 
}

\bib{fl}{article}{
  author={Farrell, Thomas},
  author={Lafont, Jean},
  title={EZ-structures and topological applications},
  journal={Comm. Math. Helv.},
  volume={80},
  date={2005},
  pages={103-121},
}

\bib{geoghegan}{book}{
  author={Geoghegan, Ross},
 title={Topological methods in group theory},
  series = {Graduate texts in mathematics},
publisher = {Springer-Verlag, Berlin},
date={2008},
  pages={103-121},
    pages={xxii+473},
}

\bib{gmt}{article}{
 author={Guilbault, Craig},
  author={Moran, Molly},
    author={Tirel, Carrie},
  title={Boundaries of Baumslag-Solitar Groups},
  journal = {Alg. \& Geom. Top.},
  date = {2019},
  pages = {2077-2097},
  volume = {19},

}

\bib{gm}{article}{
	author={Guilbault, Craig},
	author={Moran, Molly},
	title={Coarse $\cz$-Boundaries for Groups},
	note={to appear in Michigan Mathematical Journal},

}
\bib{msw}{article}{

  author={Mosher, Lee},
    author={Sageev, Micha},
      author={Whyte, Kevin},
  title={Quasi-actions on trees I. Bounded valence},
  journal={Annals of Mathematics},
  volume={184},
  date={2003},
  number={2},
  pages={115-164},
}

\bib{Szc12}{book}{
    AUTHOR = {Szczepa\'{n}ski, Andrzej},
     TITLE = {Geometry of crystallographic groups},
    SERIES = {Algebra and Discrete Mathematics},
    VOLUME = {4},
 PUBLISHER = {World Scientific Publishing Co. Pte. Ltd., Hackensack, NJ},
      YEAR = {2012},
     PAGES = {xii+195},
      ISBN = {978-981-4412-25-4},
   MRCLASS = {20H15 (20-02 20F65 53C15)},
  MRNUMBER = {2978307},
MRREVIEWER = {Karel Dekimpe},
       DOI = {10.1142/8519},
       URL = {https://doi.org/10.1142/8519},
}

\bib{Tor80}{article}{
    AUTHOR = {Toru\'{n}czyk, H.},
     TITLE = {On {${\rm CE}$}-images of the {H}ilbert cube and
              characterization of {$Q$}-manifolds},
   JOURNAL = {Fund. Math.},
  FJOURNAL = {Polska Akademia Nauk. Fundamenta Mathematicae},
    VOLUME = {106},
      YEAR = {1980},
    NUMBER = {1},
     PAGES = {31--40},
      ISSN = {0016-2736},
   MRCLASS = {57N20 (54B15 57N99)},
  MRNUMBER = {585543},
MRREVIEWER = {Ross Geoghegan},
       DOI = {10.4064/fm-106-1-31-40},
       URL = {https://doi.org/10.4064/fm-106-1-31-40},
}
\bib{Tor81}{article}{
    AUTHOR = {Toru\'{n}czyk, H.},
     TITLE = {Characterizing {H}ilbert space topology},
   JOURNAL = {Fund. Math.},
  FJOURNAL = {Polska Akademia Nauk. Fundamenta Mathematicae},
    VOLUME = {111},
      YEAR = {1981},
    NUMBER = {3},
     PAGES = {247--262},
      ISSN = {0016-2736},
   MRCLASS = {57N20 (46C99 58B05)},
  MRNUMBER = {611763},
MRREVIEWER = {Ross Geoghegan},
       DOI = {10.4064/fm-111-3-247-262},
       URL = {https://doi.org/10.4064/fm-111-3-247-262},
}

\bib{vMi89}{book}{
    AUTHOR = {van Mill, J.},
     TITLE = {Infinite-dimensional topology},
    SERIES = {North-Holland Mathematical Library},
    VOLUME = {43},
 PUBLISHER = {North-Holland Publishing Co., Amsterdam},
      YEAR = {1989},
     PAGES = {xii+401},
      ISBN = {0-444-87133-0},
   MRCLASS = {57N20 (54C55)},
  MRNUMBER = {977744},
MRREVIEWER = {John J. Walsh},
}

\bib{Wes77}{article}{
    AUTHOR = {West, James E.},
     TITLE = {Mapping {H}ilbert cube manifolds to {ANR}'s: a solution of a
              conjecture of {B}orsuk},
   JOURNAL = {Ann. of Math. (2)},
    VOLUME = {106},
      YEAR = {1977},
    NUMBER = {1},
     PAGES = {1--18},
   MRCLASS = {57A20},
  MRNUMBER = {0451247},
MRREVIEWER = {V. A. Kalinin},
}

\bib{Wes71}{article}{
    AUTHOR = {West, James E.},
     TITLE = {Mapping cylinders of {H}ilbert cube factors},
   JOURNAL = {General Topology and Appl.},
  FJOURNAL = {General Topology and its Applications},
    VOLUME = {1},
      YEAR = {1971},
    NUMBER = {2},
     PAGES = {111--125},
      ISSN = {0016-660X},
   MRCLASS = {57.55 (54.00)},
  MRNUMBER = {288788},
MRREVIEWER = {K. Sieklucki},
}
\bib{Whi50}{article}{
    AUTHOR = {Whitehead, J. H. C.},
     TITLE = {Simple homotopy types},
   JOURNAL = {Amer. J. Math.},
  FJOURNAL = {American Journal of Mathematics},
    VOLUME = {72},
      YEAR = {1950},
     PAGES = {1--57},
      ISSN = {0002-9327},
   MRCLASS = {56.0X},
  MRNUMBER = {35437},
MRREVIEWER = {J. Dugundji},
       DOI = {10.2307/2372133},
       URL = {https://doi.org/10.2307/2372133},
}
	\end{biblist}
\end{bibdiv}

\obeylines

\end{document}